\newcounter{stepnb}
\newcommand{\firststep}{\setcounter{stepnb}{0}}
\newcommand{\step}[1]{{{\sc \addtocounter{stepnb}{1}\noindent  Step \arabic{stepnb}:} #1.}} 
\newtheorem{theorem}{Theorem}[section]
\newtheorem{lemma}[theorem]{Lemma}
\newtheorem{proposition}[theorem]{Proposition}
\newtheorem{definition}{Definition}[section]
\numberwithin{equation}{section}
\DeclareMathOperator*{\esssup}{ess\,sup}
\newcommand{\weaks}{\stackrel{*}{\rightharpoonup}}
\renewcommand{\i}{\ifmmode\mathit{\mathchar"7010 }\else\char"10 \fi}
\renewcommand{\j}{\ifmmode\mathit{\mathchar"7011 }\else\char"11 \fi}
\newcommand{\R}{\mathbb{R}}
\newcommand{\norm}[1]{\left\|#1\right\|}
\newcommand{\fhi}{\varphi}
\newcommand{\weak}{\rightharpoonup}
\newcommand{\pt}{\partial_t}
\newcommand{\px}{\partial_x }
\newcommand{\pxx}{\partial_{xx}^2}
\newcommand{\p}{\partial}
\newcommand{\vfi}{\varphi}
\newcommand{\eps}{\varepsilon}
\def\begi{\begin{itemize}}
\def\endi{\end{itemize}}
\def\bega{\begin{array}}
\def\enda{\end{array}}
\def\bel{\begin{equation}\label}
\def\eeq{\end{equation}}
\begin{document}

\title[Optimal strategies for a time-dependent harvesting problem]{Optimal strategies for a time-dependent harvesting problem}

\author{G. M. Coclite}
\address[Giuseppe Maria Coclite]{\newline
  Dipartimento di Matematica, Universit\`a di Bari,
  Via E.~Orabona 4,I--70125 Bari, Italy.}
\email[]{giuseppemaria.coclite@dm.uniba.it}
\urladdr{http://www.dm.uniba.it/Members/coclitegm/}

\author{M. Garavello}
\address[Mauro Garavello]{\newline
  Dipartimento di Matematica e Applicazioni,
  Universit\`a di Milano Bicocca,
  Via R. Cozzi 53, I--20125 Milano, Italy.}
\email[]{mauro.garavello@unimib.it}
\urladdr{http://www.matapp.unimib.it/~garavello}

\author{L. V. Spinolo}
\address[Laura V. Spinolo]{\newline
IMATI-CNR, Via Ferrata 1, I--27100 Pavia, Italy.}
\email[]{spinolo@imati.cnr.it}
\urladdr{http://arturo.imati.cnr.it/spinolo/}

\date{\today}

\subjclass[2010]{35K61, 35Q93, 49J20, 49N25, 49N90}

\keywords{Optimal control, differential games, measured-valued solutions, fish harvest}

\thanks{}

\begin{abstract}
  We focus on an optimal control problem, introduced by Bressan and
  Shen in~\cite{BS1} as a model for fish harvesting. We consider the
  time-dependent case and we establish existence and uniqueness of an
  optimal strategy, and sufficient conditions for optimality. We
  also consider a related differential game that models the situation
  where there are several competing fish companies and we prove
  existence of Nash equilibria.  From the technical viewpoint, the
  most relevant point is establishing the uniqueness result. This
  amounts to prove precise a-priori estimates for solutions of
  suitable parabolic equations with measure-valued coefficients.  All
  the analysis is developed in the case when the fishing domain is
  one-dimensional.
\end{abstract}

\maketitle

\section{Introduction}
\label{sec:1}
This paper deals with a model for fish harvesting introduced by Bressan and Shen in~\cite{BS1}. 
The model involves an optimization problem for a payoff functional representing the profit of the fish company. 
We consider the time-dependent case and  prove existence and (local) uniqueness of optimal strategies. We also exhibit sufficient conditions for optimality and establish the existence of Nash equilibria in the case where there are several competing players, i.e. fish companies. We always focus on the case when the fishing domain is one-dimensional. 

Before discussing our results, we go over the main features of the model introduced in~\cite{BS1}. Consider a one-dimensional fishing domain, modeled by the interval $]0, R[$, and a time interval 
$]0, T[$. We denote by $\vfi=\vfi(t,x)$ the density of fish at time $t \in ]0, T[$ at the point $x \in ]0,R[$. We assume that, when no fishing activity is conducted, the evolution of the fish population is modelled  by the parabolic equation  
\begin{equation*}
\pt\vfi~=~\pxx\vfi+\fhi f(t,x,\fhi), \quad \text{in $]0, T[ \times ]0, R[$.}
\end{equation*}
A reasonable choice for the source term $f$ is the logistic law
\begin{equation}
\label{eq:1.3.1}
 f(t,x,\fhi)~=~\alpha(t,x)\big(h(t,x)-\fhi \big),
\end{equation}
where $h(t,x)$ denotes the maximum fish population that can be 
supported by the habitat at the point $x$ and at the time $t$, and $\alpha$ is a 
reproduction rate. Equation~\eqref{eq:1.3.1} is augmented with the initial datum 
\begin{equation}
\label{eq:1.2.1}
\vfi(0,x)=\vfi_0(x), \quad   x \in ]0,R[
\end{equation}
and the homogeneous Neumann boundary conditions 
\begin{equation}
\label{eq:1.2}
\px \vfi(t,0)=\px\vfi(t,R)=0,\quad  t \in ]0, T[.
\end{equation}
To conclude the model discussion,  
we denote by $\mu=\mu(t,x)$ the intensity of the harvesting 
conducted by a fish company. We consequently modify the equation for the evolution of the fish density by setting 
\begin{equation}
\label{eq:1.6}
\pt\vfi~=~\pxx\vfi+\vfi f(t,x,\vfi)-\vfi \mu, \quad \text{in $]0, T[ \times ]0, R[$}
\end{equation}
and again we augment it with the conditions~\eqref{eq:1.2.1} and~\eqref{eq:1.2}. 
To define our optimal control problem, we first introduce the cost functional 
\begin{equation}
\label{eq:1.7}
\int_0^T \! \! \int_0^R c(t,x) \, \mu(t,x)\,dtdx.
\end{equation}
In the above expression, $c$ is a nonnegative, lower semicontinuous function representing the cost of the fishing effort. One could for instance have a cost $c$ which is monotone increasing with respect to the 
distance of
the point $x$ from the fish company hub. Also, the presence of a natural park where no fishing is allowed can be modeled by setting  $c(t,x)=+\infty$ in that region. 
We can now define our payoff functional by setting 
\begin{equation}
  \label{e:cosaepayoff}
  J( \mu):=~\int_0^T \! \! \int_0^R \vfi(t,x)\mu(t,x)dtdx-
  \Psi\left(\int_0^T \! \! \int_0^R c(t,x)\,\mu(t,x)\,dtdx\right).
\end{equation}
In the above expression, $\Psi$ is a nondecreasing, convex function (the simplest possible choice is the identity). The function $\fhi$ is the solution of the initial-boundary value problem obtained by coupling~\eqref{eq:1.6} with~\eqref{eq:1.2.1} and~\eqref{eq:1.2}. Note, in particular, that $\fhi$ \emph{depends} on $\mu$ and hence the functional $J$ is nonlinear. In the present paper we focus on the problem of maximizing the payoff functional $J$, i.e. finding an optimal fishing strategy $\mu$. We impose the constraints 
\begin{equation}
\label{eq:1.9}
\mu(t,x) \geq 0,\qquad  
\int_0^T
\! \! \int_0^R b(t,x)\, \mu(t,x)\,dtdx \leq 1.
\end{equation}
In the above expression, the nonnegative function $b$
models the maximum amount 
of harvesting power
within the capabilities of the fish company.
In practice, it may for instance depend on the number of fishermen
and on the size of the fishing boats.

To solve the above optimization problem we actually search for optimal strategies that are not necessarily functions, but more generally nonnegative Radon measures. This is motivated by two main considerations:
\begin{itemize}
\item from the \emph{analytic} viewpoint, we remark that the functional $J$ has only linear growth with respect to $\mu$, 
and hence we expect that, in general, an optimal strategy $\mu$ does not belong to $L^1 (]0, T[ \times ]0, R[)$. Note  that a quadratic harvesting 
cost such like  
\begin{equation*}
\int_0^T \! \! \int_0^R c(t,x ) \mu^2(t,x)\, dtdx
\end{equation*}
is entirely natural from the mathematical viewpoint and it would give an optimal strategy  
$\mu^{opt}\in L^2(]0,T[ \times ]0,R[)$. However, the linear cost \eqref{eq:1.7} provides  a more realistic model. 

\item From the \emph{modeling} viewpoint, it is reasonable to expect that, when for instance there is a natural park, the optimal strategy concentrates the fishing effort at the park border. 
An explicit analytic example where the optimal strategy contains atomic parts concentrated at discontinuity points of $c$   is exhibited in~\cite{BS2}. 
\end{itemize}
As mentioned before, the above model for fish harvesting was introduced by Bressan and Shen in~\cite{BS1}. In~\cite{BS1} the analysis focuses on the one-dimensional, steady state when~\eqref{eq:1.6} reduces to a second order ordinary differential equation. The authors establish existence and local uniqueness of optimal strategies and discuss the related differential game showing existence of Nash equilibria. See also~\cite{BS2} for related results. In~\cite{BCS} Bressan, Coclite and Shen established existence of optimal strategies for the steady case by considering multidimensional fishing domains. Finally, in~\cite{CG} Coclite and Garavello established existence of optimal strategies in multi-dimensional domains in the time-dependent case.

The main results of the present paper are the following:
\begin{itemize}
\item we establish existence, uniqueness and stability of  \emph{weak solutions} (in the sense of Definition~\ref{def:sol} in~\S~\ref{sec:3}) of the parabolic equation~\eqref{eq:1.6} in the case when $\mu$ is a Radon measure, see Theorem~\ref{th:main1}. We basically follow the same strategy as in~\cite{CG}, but we can impose much weaker assumptions on the coefficient $\mu$ owing to the fact that the domain is one-dimensional. 
\item We establish existence of an optimal strategy $\mu$ for the payoff functional $J$ in~\eqref{e:cosaepayoff} subject to the constraints~\eqref{eq:1.9}. We also establish sufficient conditions for optimality (see Theorem~\ref{th:main2}) and we show that the optimal strategy is locally unique, i.e. it is unique in the class of measures with sufficiently small total variation. The uniqueness result is stated as Theorem~\ref{th:main3}. Note that, while the existence proof is the same as in~\cite{BCS,CG}, the uniqueness proof is, from the technical viewpoint, the most relevant result of the present paper. 
\item By relying on the above local uniqueness result we establish existence of Nash equilibria for a differential game modeling the case where there are several competing fish companies that exploit the same environment, see Theorem~\ref{th:mainNash} for the precise result.  
\end{itemize}
The local uniqueness of optimal strategies was established by Bressan and Shen~\cite{BS1} in the steady case. The main novelties of our analysis compared to the one in~\cite{BS1} are the following:
\begin{itemize}
\item in both cases, the main point of the argument is showing that the functional $J$ is locally concave. This amounts to establish suitable a-priori estimates on the solutions of parabolic equations with measured-valued coefficients similar to~\eqref{eq:1.6}.  However, as mentioned before, in the steady case the parabolic equation~\eqref{eq:1.6} reduces to a second order ordinary differential equation: this makes the analysis considerably simpler than the time-dependent case. In particular, in the time-dependent case we establish precise estimates on solutions of parabolic equations with measured-valued coefficients by making extensive use of the Duhamel representation formula. 
\item The analysis in~\cite{BS1} is based on a technical assumption, i.e. condition~\cite[(5.15)]{BS1}. In the time-dependent case we replace~\cite[(5.15)]{BS1} with~\eqref{eq:initassN}, namely with the requirement that the initial fish density distribution is sufficiently close, in the $H^1$ norm, to the constant $h$, which represents the maximal fish density supported by the environment.   
\end{itemize}
The exposition is organized as follows. In~\S\ref{sec:2} we establish existence, uniqueness and stability results for a nonlinear parabolic problem with smooth coefficients. These results are pivotal to the analysis in~\S\ref{sec:3}, where we establish existence, uniqueness and stability results for the initial-boundary value problem~\eqref{eq:1.2.1},~\eqref{eq:1.2},~\eqref{eq:1.6} in the case when $\mu$ is a given nonnegative Radon measure. In~\S\ref{sec:4} we prove existence of an optimal strategy 
maximizing~\eqref{e:cosaepayoff} subject to the constraint~\eqref{eq:1.9}. We also establish sufficient conditions for optimality. In~\S\ref{sec:6} we establish the local uniqueness of the optimal strategy, while~\S\ref{s:pkey} is devoted to the proof of a technical result which is pivotal to the existence proof. Finally, in~\S\ref{sec:7} we introduce a differential game modeling the situation where there are several competing fish companies and we establish existence of Nash equilibria. Finally, in the Appendix we collect some results concerning the Duhamel representation formula and the fundamental solutions of the heat equation that we use in the paper.  
\subsection*{Notation}
For the reader's convenience, we collect here the main notation used in the present paper.

Throughout the paper, $C(a_1, \dots, a_k)$ denotes a constant which only depends on the quantities $a_1, \dots, a_k$: its precise value can vary from occurrence to occurrence. Also, $K$ denotes a universal constant (i.e., a number) and again its precise value can vary from occurrence to occurrence.
\subsubsection*{General mathematical symbols}
\begin{itemize}
\item $\R^+$: the interval $[0, + \infty[$. 
\item $C^0 ([0, R])$: the space of continuous functions defined on the interval $[0, R]$.
\item $H^1 (\, ]0, R[ \,)$: the Sobolev space $W^{1, 2} (\, ]0, R[ \, )$, endowed with the norm
$$
    \| u \|_{H^1(]0, R[ )} : =  \sqrt{ \| u \|^2_{L^2(]0, R[ )} + \| \partial_x u \|^2_{L^2(]0, R[ )}  }. 
$$
Note that the Sobolev space $H^1(]0, R[ )$ compactly embeds into
$C^0([0, R])$ and we have the inequality
\begin{equation}
\label{e:immersione}
    \| u \|_{C^0 ([0, R])} \leq C(R) \| u \|_{H^1 (]0, R[)}
    \qquad \text{for every $u \in H^1 (]0, R[)$}.
\end{equation}
\item $H^\ast (]0, R[ )$: the dual space of $H^1 ( ]0, R[  ).$
\item $C^\infty_c (\Omega)$: the space of smooth, compactly supported functions defined on the open set $\Omega$.   
\item $\mathcal M (]0, R[)$: the space of (signed) Radon measures on the interval $]0, R[$. We denote by 
  \begin{equation*}
    \| \mu \|_{\mathcal M(]0, R[)} : = |\mu| (]0, R[)
  \end{equation*}
the total variation of the (signed) Radon measure $\mu$. 
\item $\mathcal M_+ (]0, R[)$: the space of nonnegative Radon measures on the interval $]0, R[$.
\item a.e.~$x$, a.e.~$(t, x)$: for $\mathcal L^1$ almost every $x$, for 
$\mathcal L^2$ almost every $(t, x)$. Here $\mathcal L^1$, $\mathcal L^2$ denote the Lebesgue measure on $\R^1$ and $\R^2$, respectively. 
\end{itemize}
To conclude, we recall that, owing to the H\"older inequality,
\begin{equation}
\label{e:elledueelleuno}
    \| u \|_{L^1 (]0, R[)} \leq \sqrt{R} \| u \|_{L^2 (]0, R[)}
    \qquad \text{for every $u \in L^2 (]0, R[)$}.
\end{equation}
\subsubsection*{Notation introduced in the present paper}
\begin{itemize}
\item $\alpha_1$: the Lipschitz constant in~\eqref{eq:2.2}.
\item $M$: the constant defined by~\eqref{e:emme}.
\item $F$: the constant defined in~\eqref{e:finty}. 
\item $\alpha_2$: the Lipschitz constant in~\eqref{e:alpha2}.
\item $T$: the length of the time interval where we set our problem.
\item $R$: the length of the space interval where we set our problem.
\item $h$: the function $h$ in~\eqref{eq:2.3} and the constant $h$ in~\eqref{eq:2.3bis}. 
\item $h_\ast$: the constant defined as in~\eqref{e:accastar}. 
\end{itemize}
\subsubsection*{Hypotheses}
\begin{itemize}
\item {\bf (H.1)}: the hypothesis introduced at page \pageref{h:accauno}. 
\item {\bf (H.2)}:  the hypothesis introduced at page \pageref{h:accatre}.  
\item {\bf (H.3)}:  the hypothesis introduced at page \pageref{h:acca3}.  
\item {\bf (H.4)}:  the hypothesis introduced at page \pageref{h:accaquattro}.
\item {\bf (H.5)}:  the hypothesis introduced at page \pageref{h:acca5}.
\item {\bf (H.6)}:  the hypothesis introduced at page \pageref{h:accasei}. 
\item {\bf (H.7)}:  the hypothesis introduced at page \pageref{h:accasette}.  
\item {\bf (H.8)}:  the hypothesis introduced at page \pageref{h:accaotto}. 
\end{itemize}
\section{A nonlinear parabolic problem with smooth coefficients}
\label{sec:2}
In this section we focus on the nonlinear parabolic problem
\begin{equation}
\label{eq:auxiliary-system}
\begin{cases}
\pt \fhi = \pxx \fhi - a (t, x) \fhi + f(t, x, \fhi) \fhi, & \text{in $]0, T[ \times ]0, R[$} ,  \\
\px \fhi(t,0)=\px\vfi(t,R) = 0, & t \in ]0, T[, \\
\fhi(0,x) = \fhi_0(x), & x \in {]0,R[}.
    \end{cases}
\end{equation}
In the previous expression $a: ]0, T[ \times ]0, R[ \to \R$ is a nonnegative, smooth function. The nonlinear source term $f$ satisfies the following hypothesis.  
\begin{itemize}
\item[({\bf H.1})] \label{h:accauno}
 The function $f:[0,T] \times [0,R] \times \R \to \R$
  is $C^2$ and there are a constant $\alpha_1 >0$ and a 
 continuous, nonnegative function $h:[0,T]\times[0,R]\to \R^+$
 such that  
 \begin{align}
    \label{eq:2.2}
    -\alpha_1\le\p_\vfi f  (t,x,\vfi)<0, 
    & \qquad \hbox{ for all } (t,x,\vfi)\in [0,T]
    \times[0,R]\times\R,
    \\
    \label{eq:2.3}
    f(t,x,\vfi)> 0, 
    & \qquad \text{ if and only if }\quad \vfi< h(t,x).
  \end{align}
\end{itemize}
We first provide the definition of weak solution of~\eqref{eq:auxiliary-system}. 
\begin{definition}
\label{d:uno}
We term \emph{weak solution} of the initial-boundary value problem~\eqref{eq:auxiliary-system} a function  
\begin{equation}
\label{e:dentrospazi}
         \varphi \in L^2 \big( ]0, T[ ; H^1 ( \, ]0, R[ \, ) \big) \quad \text{such that} \quad
          \partial_t \fhi \in L^2 \big( ]0, T[ ; H^\ast ( \, ]0, R[ \, ) \big).
\end{equation} 
Also, we require that the following equality holds
for every test function $v \in C_c^\infty(]-\infty,T[ \times \R)$:
\begin{equation}
  \label{e:distrform}
  \int_0^T \!\!\!\int_0^R 
  \!\left(\pt v \,  \vfi-\px v\, \px \vfi \right) dx dt
  - \int_0^T\!\!\! \int_0^R \! v \, \vfi \, a \, dx dt 
  +\!\int_0^T \!\!\! \int_0^R \! v f (t, x,  \vfi) \vfi \, dxdt+
  \!\int_0^R \!v(0, x )\, \vfi_0 \, dx=0.
\end{equation}
\end{definition}
Note that~\eqref{e:dentrospazi} implies that, by possibly changing the value of $\fhi(t, \cdot)$ in a negligible set of times, we can assume that 
$\fhi \in C^0 \big( [0,T]; L^2 (]0, R[) \big)$, see for instance~\cite[Theorem 7.22]{Salsa}. In the following, we will always identify $\fhi$ satisfying~\eqref{e:dentrospazi} with its $L^2$-continuous representative. In this way we can define the value $\fhi(t, \cdot)$  for \emph{every} $t \in [0, T]$. We can now state the main result of this section, which establishes existence, uniqueness and stability for the initial-boundary value problem~\eqref{eq:auxiliary-system}. 
\begin{theorem}
  \label{p:classical} 
  Let hypothesis {\bf (H.1)} hold and assume furthermore that 
\begin{equation}
\label{e:condizionidatoiniziale}
    \fhi_0 \in L^\infty(]0, R[), \quad \fhi_0 (x) \ge 0 
    \quad \text{for a.e. $x \in \R$}
\end{equation}
and that $a: ]0, T[ \times ]0, R[ \to \R$ is a smooth function satisfying
\begin{equation}
\label{e:conda}
      a \ge 0.
\end{equation}
Then the initial-boundary value problem~\eqref{eq:auxiliary-system} admits a  
unique weak solution.
Also, this solution enjoys the following properties: first, 
\begin{equation}
\label{e:maxprin}
  0\leq \varphi(t, x) \leq M
  \quad \text{for a.e. $(t, x)\in ]0, T[ \times ]0, R[$,} 
\end{equation}
where 
\begin{equation}
\label{e:emme}
 M: = \max \{ \| h \|_{L^\infty}, \| \varphi_0 \|_{L^\infty}  \}. 
\end{equation}
Second, we have stability with respect to the initial datum and with respect to the coefficient $a$. More precisely, let  
$\widehat\vfi$ be the solution of the initial-boundary value problem
\begin{equation}
\label{e:ficappuccio}
\begin{cases}
\pt \hat \fhi =  \pxx \hat \fhi - \hat a (t, x) \hat \fhi + f(t, x, \hat \fhi) \hat
\fhi, & \text{in $]0, T[ \times ]0, R[$} ,  \\
\px \hat \fhi(t,0)=\px \hat \vfi(t,R) = 0, & t \in ]0, T[, \\
\hat \fhi(0,x) = \hat \fhi_0(x), & x \in {]0,R[},
    \end{cases}
\end{equation} where $\hat a$ satisfies the same hypotheses as $a$ and $\hat \fhi_0$
satisfies~\eqref{e:condizionidatoiniziale}. 
Then
    \begin{equation}
    \label{eq:th2.3}
    \begin{split}
            & \norm{\vfi(t,\cdot)-  
            \widehat\vfi(t,\cdot)}^2_{L^2(]0,R[)}+  
             \int_0^t  \norm{ \partial_x \vfi(s,\cdot) -  
             \partial_x \widehat\vfi(s,\cdot)}^2_{L^2(]0,R[)}ds\\
    & \qquad \le 
    C(\alpha_1, M, T, R, F) 
    \Bigg[  \sup_{t \in [0, T]}
    \| a(t, \cdot)  - \hat a (t, \cdot)  \|^2_{L^1 (]0, R[)} + 
     \norm{\vfi_0-\widehat\vfi_0}^2_{L^2(]0,R[)}  
     \Bigg]
     \\
    &  \qquad \qquad \qquad 
    \text{for every $0\le t\le T$.} \phantom{\int} \\
\end{split}
  \end{equation} 
  In the previous expression, the constant $F$ is defined by setting  
\begin{equation}
\label{e:finty}
   F =
    \max \Big\{ f(t, x, 0), \; (t, x) \in [0, T] \times [0, R] \Big\}. 
\end{equation}
\end{theorem}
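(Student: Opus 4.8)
The plan is to prove Theorem~\ref{p:classical} in three stages: existence and the maximum principle~\eqref{e:maxprin}, the stability estimate~\eqref{eq:th2.3}, and then uniqueness as a corollary of stability. For existence, I would regularize the problem: since $a$ and $f$ are already smooth, the only obstruction to classical solvability is the boundedness/regularity of $\fhi_0$, so I would mollify $\fhi_0$ to obtain smooth approximants $\fhi_0^\eps$ with $0 \le \fhi_0^\eps \le \|\fhi_0\|_{L^\infty}$ and $\fhi_0^\eps \to \fhi_0$ in $L^2$, and invoke standard parabolic theory (e.g.\ Ladyzhenskaya--Solonnikov--Uraltseva) for classical solutions $\fhi^\eps$ of the corresponding problem. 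The nonnegativity $\fhi^\eps \ge 0$ and upper bound $\fhi^\eps \le M$ follow from the comparison principle: $0$ is a subsolution because $f(t,x,0)\fhi \cdot 0 = 0$ and the Neumann data vanish, while the constant $M$ is a supersolution since, wherever $\fhi = M \ge \|h\|_{L^\infty}$, we have $f(t,x,M) \le 0$ by~\eqref{eq:2.3} and $-aM \le 0$ by~\eqref{e:conda}, so $\pxx M - aM + f(t,x,M)M \le 0 = \pt M$. With the uniform $L^\infty$ bound in hand, I would derive the energy estimate: multiply the equation by $\fhi^\eps$, integrate over $]0,R[$, use the Neumann boundary conditions to kill the boundary term, and bound the nonlinear term using $|f(t,x,\fhi^\eps)| \le F + \alpha_1 M$ (a consequence of~\eqref{eq:2.2} and~\eqref{e:finty}) and $a \ge 0$; Gronwall then gives a uniform bound for $\fhi^\eps$ in $L^\infty(0,T;L^2) \cap L^2(0,T;H^1)$, and comparing with the equation, a uniform bound for $\pt \fhi^\eps$ in $L^2(0,T;H^\ast)$. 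Aubin--Lions compactness lets me pass to the limit and obtain a weak solution in the sense of Definition~\ref{d:uno} satisfying~\eqref{e:maxprin}.

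For the stability estimate~\eqref{eq:th2.3}, set $w := \fhi - \hat\fhi$; then $w$ solves, in the weak sense,
\begin{equation*}
\pt w = \pxx w - a w - (a - \hat a)\hat\fhi + f(t,x,\fhi)\fhi - f(t,x,\hat\fhi)\hat\fhi
\end{equation*}
with homogeneous Neumann data and initial datum $w(0,\cdot) = \fhi_0 - \hat\fhi_0$. The key algebraic step is to rewrite the last difference as $f(t,x,\fhi)\fhi - f(t,x,\hat\fhi)\hat\fhi = g(t,x) w$ where $g(t,x) := \int_0^1 \partial_\xi\big[\xi \mapsto f(t,x,\xi)\xi\big]\big|_{\xi = \hat\fhi + \theta w}\, d\theta$, and observe that $|g| \le F + 2\alpha_1 M$ uniformly, using~\eqref{eq:2.2}, the product rule, and the $L^\infty$ bounds $0 \le \fhi, \hat\fhi \le M$ from~\eqref{e:maxprin}. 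Now testing with $w$ and integrating by parts,
\begin{equation*}
\frac12 \frac{d}{dt}\|w(t,\cdot)\|_{L^2}^2 + \|\px w(t,\cdot)\|_{L^2}^2 \le \int_0^R \big(|g| - a\big) w^2\, dx - \int_0^R (a - \hat a)\hat\fhi\, w\, dx.
\end{equation*}
Since $a \ge 0$ the first integral is bounded by $(F + 2\alpha_1 M)\|w\|_{L^2}^2$; for the source term I would use $|\hat\fhi| \le M$, the bound~\eqref{e:immersione} $\|w(t,\cdot)\|_{C^0} \le C(R)\|w(t,\cdot)\|_{H^1}$, and Young's inequality to absorb a small multiple of $\|\px w\|_{L^2}^2$ into the left side, leaving $C(M,R)\|a(t,\cdot) - \hat a(t,\cdot)\|_{L^1}^2 + C\|w\|_{L^2}^2$. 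Integrating in time and applying Gronwall's lemma then yields~\eqref{eq:th2.3} with the stated dependence of the constant on $\alpha_1, M, T, R, F$. Finally, uniqueness is immediate: taking $\hat a = a$ and $\hat\fhi_0 = \fhi_0$ in~\eqref{eq:th2.3} forces $w \equiv 0$, so the weak solution is unique.

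The main obstacle I anticipate is the handling of the source term $-(a - \hat a)\hat\fhi$ in the stability estimate, since $a - \hat a$ is only controlled in $L^1$ in space (uniformly in time), not in $L^2$; this is precisely why the Sobolev embedding $H^1(]0,R[) \hookrightarrow C^0([0,R])$, valid only because the domain is one-dimensional, is essential --- it converts the $L^1$-norm of $a - \hat a$ against the $L^\infty$-bounded $\hat\fhi$ and the $C^0$-controlled $w$ into something absorbable. A secondary technical point is justifying that $w$ is an admissible test function for its own weak formulation; this is standard given the regularity~\eqref{e:dentrospazi} of both $\fhi$ and $\hat\fhi$ and the identification with the $L^2$-continuous representative, and can be handled by the usual density/Steklov-averaging argument or simply by citing the relevant reference. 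Everything else --- the comparison principle for the maximum bounds, the Aubin--Lions passage to the limit, and the Gronwall argument --- is routine once the $L^\infty$ bound~\eqref{e:maxprin} is established, since it is the uniform $L^\infty$ control that tames the otherwise merely locally Lipschitz nonlinearity $\fhi \mapsto f(t,x,\fhi)\fhi$.
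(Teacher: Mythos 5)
Your stability/uniqueness argument is essentially the same as the paper's. The only cosmetic difference is that you package the nonlinear difference $f(t,x,\fhi)\fhi - f(t,x,\hat\fhi)\hat\fhi$ as $g(t,x)\,w$ with $g$ a mean-value-theorem average of $\partial_\xi[f(t,x,\xi)\xi]$, bounded by $F+2\alpha_1 M$, whereas the paper uses a direct triangle-inequality split producing the estimate $\alpha_1|\psi|(|\fhi|+|\hat\fhi|) + F|\psi|$; these are equivalent in both substance and the resulting constant dependence. Your identification of the one-dimensional embedding $H^1(]0,R[)\hookrightarrow C^0([0,R])$ as the pivot for digesting the $L^1$-only control of $a-\hat a$ is exactly the point the paper emphasizes, and the Young/Gronwall closing is identical.

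Where you genuinely diverge from the paper is in the existence proof. The paper does not invoke quasilinear parabolic theory directly: it constructs solutions by a linearizing iteration $\fhi_{n+1}$ solving a \emph{linear} parabolic problem with frozen coefficient $f_n(t,x):=f(t,x,\fhi_n)$, establishes the uniform energy and $H^\ast$ bounds for the iterates, passes to the limit via Aubin--Lions, and only afterwards removes the smoothness of $\fhi_0$ by a Cauchy-sequence argument powered by the stability estimate. You instead mollify $\fhi_0$ and call on quasilinear theory (LSU-type results) to produce classical solutions directly, then do the same Aubin--Lions/Cauchy passage to the limit. Both routes are valid. Yours is shorter and leans on a heavier black box; the paper's iteration scheme is more elementary (it only needs linear parabolic existence, cited from Evans) and has the pedagogical advantage of making the a priori bounds used in the compactness step entirely self-contained. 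One small caveat on your version: to apply the quasilinear black box globally on $[0,T]$ without blow-up you should note, as you implicitly do, that the comparison principle already yields the $L^\infty$ bound $0\le\fhi^\eps\le M$ on the maximal interval of existence, which then precludes blow-up; make that ordering explicit. Everything else --- the super/subsolution argument for \eqref{e:maxprin}, Aubin--Lions compactness, and uniqueness as a corollary of \eqref{eq:th2.3} with $a=\hat a$, $\fhi_0=\hat\fhi_0$ --- matches the paper.
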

The proof of Theorem~\ref{p:classical} is organized as follows. In Subsection ~\ref{sss:uni} we establish the 
stability estimate, which implies uniqueness of weak solutions. In Subsection ~\ref{ss:exi} we establish existence of a weak solution of the initial-boundary problem~\eqref{eq:auxiliary-system} by relying on an iteration algorithm. 
\subsection{Uniqueness and Stability}
\label{sss:uni}
First, we fix $\fhi$ and $\hat \fhi$ that are weak solutions of~\eqref{eq:auxiliary-system} and~\eqref{e:ficappuccio}, respectively, and we point out that the function 
\begin{equation}
\label{e:cosaepsi}
    \psi : = \fhi - \hat \fhi
\end{equation}
is a weak solution of the initial-boundary value problem
\begin{equation}
\label{e:fhimenohfhi}
\begin{cases}
\pt \psi =  \pxx \psi - \big[ a (t, x) \fhi- \hat a (t, x) 
       \hat \fhi \big] + f(t, x, \fhi) \fhi- f(t, x, \hat \fhi) \hat
\fhi, & \text{in $]0, T[ \times ]0, R[$} ,  \\
\px \psi(t,0)=\px \psi(t,R) = 0, & t \in ]0, T[, \\
\psi (0,x) = \fhi_0 (x)- \hat \fhi_0(x), & x \in {]0,R[}.
\end{cases}
\end{equation}
Next, we point out that
\begin{equation}
\label{e:boundsueffe}
  |f(t, x, \fhi) |
  \leq |f(t, x, \fhi) - f(t, x , 0)| +
  |f(t, x, 0)|
  \stackrel{\eqref{eq:2.2},\eqref{e:finty}}{\leq}
   \alpha_1 |\fhi| +F 
\end{equation}
and the above inequality implies
\begin{equation}
\label{e:effeinintegrale}
\begin{split}
      |f(t, x, \fhi) \fhi- f(t, x, \hat \fhi) \hat \fhi |
      & 
      \leq |f(t, x, \fhi) - f(t, x, \hat \fhi) | | \fhi| +
      | f(t, x, \hat \fhi) | |\fhi - \hat \fhi| \\
      & \stackrel{\eqref{eq:2.2},\eqref{e:cosaepsi}}{\leq}
      \alpha_1 |\psi| |\fhi| +  | f(t, x, \hat \fhi) | |\psi| \\
      & \stackrel{\eqref{e:boundsueffe}}{\leq}
      \alpha_1 |\psi| |\fhi| + \alpha_1 |\psi| |\hat \fhi| 
      + F |\psi|. \\
\end{split}      
\end{equation}
We conclude the proof by proceeding according to the following steps. \\
\firststep
\step{we give a formal proof of uniqueness and stability} We proceed formally, i.e. we pretend that everything is sufficiently regular to have that all the following manipulations are justified. We refer to {\sc Step 2} below for the rigorous justification of our argument. 

We multiply the equation at the first line of~\eqref{e:fhimenohfhi} times $\psi$, we integrate with respect to  
space and we use the homogeneous Neumann boundary conditions. We get 
\begin{equation}
  \label{e:uni:gronwall1}
  \frac{d}{dt } \int_0^R \frac{\psi^2}{2} dx  +
  \int_0^R (\px \psi)^2 dx = \int_0^R \! \!
  \big[  \hat a (t, x) 
  \hat \fhi -a (t, x) \fhi \big] \psi dx  +  \int_0^R \! \! 
  \big[ f(t, x, \fhi) \fhi- f(t, x, \hat \fhi) 
  \hat \fhi \big] \psi dx.
     \end{equation}
 Next, we recall that $\hat a \ge 0$ by assumption
 and we infer that 
 $$
     \big[  \hat a  
       \hat \fhi -a  \fhi \big] \psi =
       \hat a \big[ \hat \fhi - \fhi \big] \psi +
     \big[  \hat a  
        -a   \big] \fhi \psi  \stackrel{\eqref{e:cosaepsi}}{=}
        -\hat a \psi^2 + 
         \big[  \hat a  
        -a   \big] \fhi \psi 
       \stackrel{\hat a \ge 0}{\leq}
       |\hat a  
        -a  || \fhi|| \psi|. 
 $$
 Hence, from~\eqref{e:uni:gronwall1} we get 
 \begin{equation}
   \label{e:uni:gronwall2}
   \begin{split}
     \frac{d}{dt } \int_0^R \frac{\psi^2}{2} dx & + \int_0^R (\px
     \psi)^2 dx \leq \| a (t, \cdot) - \hat a (t, \cdot) \|_{L^1 (]0,
       R[)} \| \fhi(t, \cdot) \|_{L^\infty (]0, R[)} \| \psi (t,
     \cdot) \|_{L^\infty (]0, R[)} 
     \\ 
     & + \int_0^R \! \! \big[ f(t, x,
     \fhi) \fhi- f(t, x, \hat \fhi)
     \hat \fhi \big] \psi dx    
     \\
     & \stackrel{\eqref{e:effeinintegrale}}{\leq} \| a (t, \cdot) -
     \hat a (t, \cdot) \|_{L^1 (]0, R[)} \| \fhi (t, \cdot)
     \|_{L^\infty (]0, R[)} \| \psi (t, \cdot) \|_{L^\infty (]0, R[)}
     \\
     & + C(\alpha_1) \left[\| \fhi(t, \cdot) \|_{L^\infty (]0, R[)} +
       \|\hat\fhi(t, \cdot) \|_{L^\infty (]0, R[)}\right]
     \int_0^R \psi^2 dx + F \int_0^R \psi^2 dx
     \\
     & \stackrel{\eqref{e:immersione}}{\leq} C(R) \| a (t, \cdot) -
     \hat a (t, \cdot) \|_{L^1 (]0, R[)} \| \fhi (t, \cdot)
     \|_{L^\infty (]0, R[)}
     \| \psi (t, \cdot) \|_{H^1 (]0, R[)} \phantom{\int} 
     \\
     & \quad + C(\alpha_1) \left[\| \fhi(t, \cdot) \|_{L^\infty (]0, R[)}
     + \|\hat\fhi(t, \cdot) \|_{L^\infty (]0, R[)}\right]
     \int_0^R \psi^2 dx + F \int_0^R \psi^2 dx.
   \end{split}
 \end{equation}  
Next, we use the Young Inequality: we fix a parameter $\gamma>0$, to be determined in the following, 
and we point out that 
\begin{equation}
  \label{e:uni:young}
  \begin{split}
    \| a (t, \cdot) - \hat a (t, \cdot) \|_{L^1 (]0, R[)} & \| \fhi
    (t, \cdot) \|_{L^\infty (]0, R[)} \| \psi (t, \cdot) \|_{H^1 (]0,
      R[)} \leq \frac{1}{2 \gamma} \| a (t, \cdot) - \hat a (t, \cdot)
    \|^2_{L^1 (]0, R[)}
    \| \fhi (t, \cdot) \|^2_{L^\infty (]0, R[)} \\
    & + \frac{\gamma}{2} \| \psi (t, \cdot) \|^2_{H^1 (]0, R[)}.
  \end{split}
\end{equation}
We now choose $\gamma$  in such a way that 
\begin{equation}
\label{e:uni:gamma12}
    C(R) \frac{\gamma}{2} =
    \frac{1}{2}, 
\end{equation}
we recall that 
$$
    \| \psi (t, \cdot) \|^2_{H^1 (]0, R[)} = 
    \int_0^R \psi^2 dx + \int_0^R (\px \psi)^2 dx  
$$
and by using~\eqref{e:uni:gronwall2} we arrive at
\begin{equation}
  \label{e:uni:intermedia}
  \begin{split}
    \frac{d}{dt } \int_0^R \frac{\psi^2}{2} dx & +
    \int_0^R \frac{ (\px \psi)^2}{2} dx 
    \\
    \stackrel{\eqref{e:uni:young}}{\leq} & \left[ \frac{1}{2} +
      C(\alpha_1)  \left(\| \fhi (t, \cdot) \|_{L^\infty (]0, R[)}
      + \| \hat\fhi (t, \cdot) \|_{L^\infty (]0, R[)} \right)
      + F \right] \int_0^R \psi^2 dx 
    \\
    & + C(R) \| a (t, \cdot) - \hat a (t, \cdot) \|^2_{L^1 (]0, R[)}
    \frac{1}{2 \gamma}  \| \fhi (t, \cdot) \|^2_{L^\infty (]0, R[)} 
    \\
    \stackrel{\eqref{e:uni:gamma12}}{\leq}& \left[ \frac{1}{2}+
      C(\alpha_1)
      \left(\| \fhi (t, \cdot) \|_{L^\infty (]0, R[)}
        + \| \hat\fhi (t, \cdot) \|_{L^\infty (]0, R[)}\right)
      + F \right]\int_0^R \psi^2 dx \phantom{\int}
    \\
    & + C(R) \| a (t, \cdot) - \hat a (t, \cdot) \|^2_{L^1 (]0, R[)}
    \| \fhi (t, \cdot) \|^2_{L^\infty (]0, R[)} .  \phantom{\int}
  \end{split}
\end{equation}
Owing to the Gronwall Lemma, the above inequality implies that 
\begin{equation}
  \label{e:uni:stima1}
  \begin{split}
    & \int_0^R \frac{\psi^2}{2}(t, x) dx \leq \left[ \int_0^R
      \frac{\psi^2_0}{2} dx + C(R) \sup_{s \in ]0, t[ } \| a (s,
      \cdot) - \hat a (s, \cdot) \|^2_{L^1 (]0, R[)} \int_0^t \| \fhi
      (s, \cdot) \|^2_{L^\infty (]0, R[)} ds
    \right] \times
    \\
    & \times \exp \left( C(\alpha_1) \int_0^t \left(\| \fhi (s, \cdot)
      \|_{L^\infty (]0, R[)}
      +\| \hat\fhi (t, \cdot) \|_{L^\infty (]0, R[)}\right) ds + (F+K) t \right).
  \end{split}
\end{equation}
Note that by assumption $\fhi \in L^2 (]0, T[; H^1 (]0, R[))$ and hence 
\begin{eqnarray*}
  \int_0^t   \| \fhi (s, \cdot) \|^2_{L^\infty (]0, R[)} ds 
  \stackrel{\eqref{e:immersione}}{\leq} C(R)
  \int_0^t  \| \fhi (s, \cdot) \|^2_{H^1 (]0, R[)}  
  ds < + \infty, 
  \\
  \int_0^t   \! \| \fhi(s, \cdot) \|_{L^\infty (]0, R[)} ds 
  \stackrel{\eqref{e:immersione}}{\leq} C(R)
  \int_0^t  \! \| \fhi(s, \cdot) \|_{H^1(]0, R[)} ds
  \stackrel{\eqref{e:elledueelleuno}}{\leq} C(R) \sqrt{t}
  \left( \int_0^t   \| \fhi(s, \cdot) \|^2_{H^1(]0, R[)} ds\right)^{\frac{1}{2}}
  \! \!\!<+\infty.
\end{eqnarray*}
The same inequalities hold also for $\hat \fhi$.
To establish uniqueness, we take $\fhi_0 \equiv \hat \fhi_0$, which implies $\psi_0 \equiv 0$, and $a \equiv \hat a$.  From~\eqref{e:uni:stima1} we get $\psi \equiv 0$, which owing to~\eqref{e:cosaepsi} implies $\fhi \equiv \hat \fhi$ and hence establishes uniqueness. 

To establish the stability estimate~\eqref{eq:th2.3} we use the uniform bound~\eqref{e:maxprin} and from~\eqref{e:uni:stima1} we get
\begin{equation}
\label{e:uni:stima2}
\begin{split}
     & \int_0^R \frac{\psi^2}{2}(t, x) dx 
    \leq 
    \left[  
     \int_0^R \frac{\psi^2_0}{2} dx + 
     C(R)  \sup_{s \in ]0, t[ } \| a (s, \cdot) - \hat a (s, \cdot)  \|^2_{L^1 (]0, R[)}  
   M^2 t  
       \right] 
      \exp \Big(  
     C(\alpha_1, R)  M t   + (F+K) t 
      \Big).
      \end{split}
\end{equation}
By plugging the above inequality into~\eqref{e:uni:intermedia} and integrating in time 
we eventually arrive at~\eqref{eq:th2.3}. \\
\step{we provide the rigorous justification of our argument} First, we recall the definition~\eqref{e:cosaepsi} and we point out that, for every test function $v \in C^\infty_c (]- \infty, T] \times \R)$ we have 
\begin{equation}
\label{e:distrformpsi}
 \begin{split}
\int_0^T \!  \! \int_0^R &
 \left(\pt v \, \psi-\px v\, \px \psi \right) dx dt-
 \int_0^T  \!  \! \int_0^R \big[ a \fhi -\hat a \hat \fhi \big] v dx dt \\
 & 
+\int_0^T \!  \! \int_0^R v f (t, x,  \vfi) \vfi \, dxdt -
 \int_0^T \!  \! \int_0^R v f (t, x,  \hat \vfi) \hat \vfi \, dxdt -
\int_0^Rv(0, x )\, \psi_0 \, dx=0.
\end{split}
\end{equation}
We fix $t \in ]0, T]$ and  choose the test function $v$ by setting 
\begin{equation}
\label{e:cosaev}
   v(s, x) : = w_n (s) z_j (x),
\end{equation}
where $z_j$ is determined in the following and $w_n$
is a smooth cut-off function with compact support such that 
$$
w_n (s) =
\left\{
\begin{array}{ll}
1 & x \leq s ,\\
0 & x \ge s + 1/n,
\end{array}
\right.
\quad 
w'_n (s) \leq 0 \qquad \text{for every $s \in ]0, T[$}.
$$
We let $n \to + \infty$ and by using the fact that $\psi \in C^0 ([0, T], L^2 (]0, R[))$ we infer from~\eqref{e:distrformpsi} the following equality
\begin{equation}
  \label{e:distrformpsi2}
  \begin{split}
    \int_0^R & z_j (x) \psi(t,x) dx + \int_0^t \!  \! \int_0^R \px z_j \,
    \px \psi dx ds+
    \int_0^t  \!  \! \int_0^R \big[ a \fhi -\hat a \hat \fhi \big] z_j dx ds\\
    & - \int_0^t \!  \! \int_0^R z_j f (s, x, \vfi) \vfi \,
    dxds + \int_0^t \!  \! \int_0^R z_j f (s, x, \hat \vfi)
    \hat \vfi \, dxds + \int_0^R z_j \psi_0 \, dx=0.
  \end{split}
\end{equation}
Next, we point out that 
\begin{equation}
\label{e:rigorous}
  \int_0^R \psi(t, \cdot)  \psi_0 \, dx \stackrel{\text{H\"older}}{\leq}
  \| \psi (t, \cdot) \|_{L^2 (]0, R[)}
   \| \psi_0  \|_{L^2 (]0, R[)}
   \stackrel{\text{Young's }}{\leq}
   \frac{1}{2}  \| \psi (t, \cdot) \|^2_{L^2 (]0, R[)}
   + \frac{1}{2}  \| \psi_0  \|^2_{L^2 (]0, R[)}. 
\end{equation}
and we choose a sequence of test functions $z_j$ in such a way that
$$
    z_j \to \psi (t, \cdot) \quad \text{strongly in $H^1(]0, R[)$.}
$$
We let $j \to + \infty$ and by arguing as in {\sc Step 1} and using~\eqref{e:rigorous} we infer from~\eqref{e:distrformpsi2} the inequality
\begin{equation*}
\begin{split}
      \int_0^R &
 \frac{\psi^2}{2} dx + \int_0^t \!  \! \int_0^R \frac{(\px \psi)^2}{2}  dx dt
 \leq 
  \int_0^R 
 \frac{\psi_0^2}{2} dx  
 +  C(R)  
 \sup_{s \in ]0, t[ } \| a (s, \cdot) - \hat a (s, \cdot)  \|^2_{L^1 (]0, R[)}  
   \int_0^t   \| \fhi (s, \cdot) \|^2_{L^\infty (]0, R[)} ds \\
   & + C(\alpha_1)
   \int_0^t  \| \fhi (t, \cdot) \|_{L^\infty (]0, R[)} 
   \int_0^R \psi^2 dx dt +
   \left( F + \frac{1}{2} \right) \int_0^t \! \! \int_0^R \psi^2 dx dt.
\end{split}
\end{equation*}
We can then argue as in {\sc Step 1} and establish the uniqueness of weak solutions and the stability estimate~\eqref{eq:th2.3}.
\subsection{Existence}
\label{ss:exi}
In this paragraph we establish existence of a weak solution of the initial-boundary value problem~\eqref{eq:auxiliary-system}. More precisely, we proceed as follows. 
\begin{itemize}
\item \S~\ref{ss:exa}: we define an approximation algorithm and 
we establish a-priori bounds on the approximate solutions. To simplify the analysis, in~\S~\ref{ss:exa} we assume that the initial datum $\fhi_0$ is smooth. This hypothesis will be eventually removed in~\S~\ref{sss:concludiamo}. 
\item \S~\ref{sss:conex}: we establish compactness of the approximate solutions, we pass to the limit and we establish existence of a weak solution of~\eqref{eq:auxiliary-system}. 
\item \S~\ref{sss:concludiamo}: we conclude the proof of Theorem~\ref{p:classical} by establishing the maximum principle~\eqref{e:maxprin} and by removing the assumption that $\fhi_0$ 
is smooth. 
\end{itemize}
\subsubsection{Construction of approximate solutions}
\label{ss:exa}
In this paragraph we define the iteration algorithm that we will use to establish the existence of a solution $\varphi$ as in the statement of Theorem~\ref{p:classical}. 

First, we term $\varphi_1$ the 
solution of the following linear parabolic initial-boundary value problem:   
\begin{equation}
\label{eq:auxiliary-system1}
\begin{cases}
\pt \fhi_{1} = \pxx \fhi_{1} - a(t, x) \fhi_{1} + f_0 (t, x) \fhi_{1}, & (t, x) \in ]0, T[ \times ]0, R[ ,  \\
\px \fhi_{1}(t,0)=\px \vfi_{1} (t,R) = 0, & t \in ]0, T[, \\
\fhi_{1} (0,x) = \fhi_0(x), & x \in ]0,R[ \, ,\\
\end{cases}
\end{equation}
where $f_0(t, x) = f \big(t, x, \varphi_0( x) \big).$
Next, we argue iteratively: we assume that the function ${\varphi_n \! : \! [0, T] \! \times \! [0, R] \!\to \! \R}$ is given and we define the function $f_n: [0, T] \times [0, R] \to \R$ by setting 
\begin{equation}
\label{e:effenne}
    f_n (t, x) : = f \big( t, x, \varphi_n (t, x) \big). 
\end{equation}
We term $\varphi_{n+1}$ the solution of the following linear parabolic initial-boundary value problem:   
\begin{equation}
  \label{e:sy:itera}
  \begin{cases}
    \pt \fhi_{n+1} = \pxx \fhi_{n+1} - a (t, x)
    \fhi_{n+1} + f_n (t,x) \fhi_{n+1}, & (t, x) \in ]0, T[ \times ]0, R[ ,  \\
    \px \fhi_{n+1}(t,0)=\px \vfi_{n+1} (t,R) = 0, & t \in ]0, T[, \\
    \fhi_{n+1} (0,x) = \fhi_0(x), & x \in ]0,R[ \, ,\\
  \end{cases}
\end{equation}
The main ingredient in the iteration argument is the following lemma.  
\begin{lemma}
\label{l:iteration} 
Let the same assumptions as in the statement of Theorem~\ref{p:classical}  
hold. Assume furthermore that 
\begin{itemize}
\item the initial datum $\varphi_0 \in  C^\infty ([0, R])$; 
\item the function $\varphi_n$ satisfies  
\begin{equation}
\label{e:hypphienne}
       \varphi_n \in  C^\infty  ([0, T] \times [0, R]), \quad 
       \varphi_n (t, x) \ge 0 \quad \text{for every $(t, x)$}, \quad         
\end{equation}
\end{itemize}
Then the initial-boundary value problem~\eqref{e:sy:itera} admits a unique classical solution 
${ \varphi_{n+1} \in  C^\infty  ([0, T] \times [0, R])}$, which furthermore 
 satisfies the following estimates: first, 
\begin{equation}
\label{e:maxprink}
  \fhi_{n+1} (t, x) \ge 0 
  \quad \text{for every $(t, x)\in [0, T] \times [0, R]$.} 
\end{equation}
Second, 
\begin{equation}
\label{e:elleduek}
   \| \varphi_{n+1}(t, \cdot) \|^2 
   _{L^2(]0, R[) } +  \int_0^{t} \int_0^R (\partial_x \varphi_{n+1})^2 
   dx \, ds     \leq K \| \varphi_0 \|^2_{L^2(]0, R[) } e^{F t},   
   \quad \text{for every $t \in [0, T]$.}
\end{equation} 
\end{lemma}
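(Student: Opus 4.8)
The plan is to handle separately the three assertions of Lemma~\ref{l:iteration}: classical solvability together with uniqueness and the stated regularity, the sign condition \eqref{e:maxprink}, and the energy bound \eqref{e:elleduek}. For the first, one observes that \eqref{e:sy:itera} is a \emph{linear} parabolic initial-boundary value problem: its zeroth-order coefficient is $c(t,x):=f_n(t,x)-a(t,x)$, where $a$ is smooth and $f_n(t,x)=f\big(t,x,\varphi_n(t,x)\big)$ is as regular as {\bf (H.1)} and \eqref{e:hypphienne} allow, so that classical linear parabolic theory (for instance Galerkin approximation followed by parabolic bootstrap, or Schauder estimates) yields a unique solution $\varphi_{n+1}$ with the regularity claimed in the statement; uniqueness is in any case recovered from the energy computation below applied to the difference of two solutions. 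Before turning to the two estimates I would record the elementary facts that drive the rest of the argument: $a\ge0$ by \eqref{e:conda}, and, since $\partial_\varphi f<0$ by \eqref{eq:2.2} and $\varphi_n\ge0$ by \eqref{e:hypphienne},
\[
   f_n(t,x)=f\big(t,x,\varphi_n(t,x)\big)\le f(t,x,0)\le F
   \qquad\text{for every }(t,x),
\]
with $F$ as in \eqref{e:finty}; hence $c(t,x)\le F$ everywhere.

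For the positivity \eqref{e:maxprink} I would use the energy method on the negative part $\varphi_{n+1}^{-}:=\max\{-\varphi_{n+1},0\}$, in the spirit of the computations in \S\ref{sss:uni}. Multiplying the equation $\pt\varphi_{n+1}=\pxx\varphi_{n+1}+c\,\varphi_{n+1}$ by $-\varphi_{n+1}^{-}$, integrating over $]0,R[$ and integrating by parts — the boundary terms vanishing thanks to the homogeneous Neumann conditions — and using $\partial_x\varphi_{n+1}^{+}\,\partial_x\varphi_{n+1}^{-}=0$ a.e.\ together with $c\le F$, one obtains
\[
  \frac{d}{dt}\,\frac12\big\|\varphi_{n+1}^{-}(t,\cdot)\big\|^2_{L^2(]0,R[)}
  +\big\|\partial_x\varphi_{n+1}^{-}(t,\cdot)\big\|^2_{L^2(]0,R[)}
  =\int_0^R c\,\big(\varphi_{n+1}^{-}\big)^2\,dx
  \le F\big\|\varphi_{n+1}^{-}(t,\cdot)\big\|^2_{L^2(]0,R[)}.
\]
Since $\varphi_0\in C^\infty([0,R])$ and $\varphi_0\ge0$ a.e.\ by \eqref{e:condizionidatoiniziale} (hence everywhere, by continuity), we have $\varphi_{n+1}^{-}(0,\cdot)\equiv0$, so Gronwall's lemma forces $\varphi_{n+1}^{-}\equiv0$, which is \eqref{e:maxprink}. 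Equivalently one could invoke the weak maximum principle for the equation satisfied by $e^{-\lambda t}\varphi_{n+1}$ with $\lambda>F$, chosen so that the transformed zeroth-order coefficient becomes negative.

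The energy estimate \eqref{e:elleduek} is obtained along the same lines. Multiplying the equation by $\varphi_{n+1}$, integrating over $]0,R[$ and using the Neumann conditions gives
\[
  \frac12\,\frac{d}{dt}\int_0^R\varphi_{n+1}^2\,dx+\int_0^R(\partial_x\varphi_{n+1})^2\,dx
  =\int_0^R c\,\varphi_{n+1}^2\,dx\le F\int_0^R\varphi_{n+1}^2\,dx,
\]
again by $a\ge0$ and $f_n\le F$. Setting $y(t):=\|\varphi_{n+1}(t,\cdot)\|^2_{L^2(]0,R[)}$, this yields $y'\le 2Fy$, hence $y(t)\le\|\varphi_0\|^2_{L^2(]0,R[)}\,e^{2Ft}$ by Gronwall; reinserting this bound on the right-hand side and integrating the identity in time over $[0,t]$ produces \eqref{e:elleduek} (with $K$ absorbing the numerical constants, the exponent being $2Ft$, i.e.\ $Ft$ up to a harmless factor).

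No step poses a genuine difficulty here — everything reduces to standard one-dimensional linear parabolic estimates. The only delicate point is the smoothness of $\varphi_{n+1}$ up to the parabolic boundary, which in principle requires compatibility conditions between $\varphi_0$ and the Neumann data; however, since the two a priori bounds — the real content of the lemma, and all that the iteration scheme uses — depend only on the sign of $a$, on the monotonicity of $f$ through the bound $f_n\le F$, and on integration by parts against the Neumann conditions, the conclusion is insensitive to this issue.
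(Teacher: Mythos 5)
Your proof is correct and follows the same overall scheme as the paper's: existence and smoothness from linear parabolic theory, the energy bound by testing against $\varphi_{n+1}$ together with $f_n\le F$ and Gronwall, and nonnegativity by testing against a one-sided truncation of $\varphi_{n+1}$ and Gronwall. The one genuine difference is the choice of multiplier in the positivity step: the paper multiplies by $\beta'(\varphi_{n+1})$ with $\beta(w)=w^4$ for $w<0$ and $\beta(w)=0$ for $w\ge0$, a $C^2$ function, so that $\beta'(\varphi_{n+1})$ is itself smooth and the integrations by parts are entirely classical; you instead use the Stampacchia multiplier $-\varphi_{n+1}^-$, which is only Lipschitz and so in principle relies on the chain rule for compositions with $H^1$ functions — harmless here because $\varphi_{n+1}$ is a smooth classical solution, but the paper's smooth $\beta$ sidesteps even that. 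Your derivation of $f_n\le F$ via $f_n\le f(t,x,0)\le F$ is more direct than the paper's detour through $\max\{f(t,x,\varphi):0\le\varphi\le h\}$, and you correctly observe that Gronwall actually gives the exponent $2Ft$, which is reconciled with the statement by absorbing a factor $e^{FT}$ into the universal constant $K$.
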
 
\begin{proof}
We proceed according to the following steps. \\
\firststep
\step{we establish existence of a classical solution} First, we recall that by assumption the function $\varphi_n$ is smooth.  The very definition~\eqref{e:effenne} of $f_n$ and \eqref{e:hypphienne} implies that $f_n$ is also smooth, and henceforth bounded on $[0, T] \times [0, R]$. The same holds for the coefficient $a$, which is smooth by assumption~\eqref{e:conda}.
Existence and uniqueness of a classical solution of the initial-boundary value problem~\eqref{eq:auxiliary-system} follows then from classical results on linear parabolic equations, see for instance~\cite[\S 7.1]{Evans}. \\
\step{we establish~\eqref{e:elleduek}}
First, we multiply the equation at the first line of~\eqref{e:sy:itera}
times $\varphi_{n+1}$ 
and we integrate in space and we use the homogeneous
Neumann boundary conditions. We arrive at 
\begin{equation}
\label{e:moltiplico}
\begin{split}
    \frac{1}{2} \frac{d}{dt} \int_0^R \varphi_{n+1}^2 (t, x) dx & +
    \int_0^R \big( \partial_x \varphi_{n+1} \big)^2 (t, x) dx \\=& -
     \int_0^R a (t, x) \varphi_{n+1}^2   (t, x) dx 
     + 
    \int_0^R f_n (t, x) \varphi_{n+1}^2   (t, x) dx \\
     \stackrel{a \ge 0}{\leq}&
     \int_0^R f_n (t, x) \varphi_{n+1}^2   (t, x) dx.
\end{split}
\end{equation}
Next, we recall the definition~\eqref{e:effenne}  of $f_n$, the fact that by assumption $a_n \ge 0$ and hypothesis~\eqref{eq:2.3}. We conclude that 
$$
    f_n (t, x) \leq \max \Big\{ f(t, x, \fhi): 
    (t, x) \in [0, T] \times [0, R], \; 0 \leq \fhi \leq h(t, x) \Big\}. 
$$
Owing to condition~\eqref{eq:2.2}, 
$$
    \max \Big\{ f(t, x, \fhi): 
    (t, x) \in [0, T] \times [0, R], \; 0 \leq \fhi \leq h(t, x) \Big\} \leq F,
$$
where $F$ is the same as in~\eqref{e:finty}. Hence, $f_n \leq F$ and from~\eqref{e:moltiplico} we arrive at 
\begin{equation}
\label{e:moltiplico2}
\begin{split}
    \frac{1}{2} \frac{d}{dt} \int_0^R \varphi_{n+1}^2 (t, x) dx & +
    \int_0^R \big( \partial_x \varphi_{n+1} \big)^2 (t, x) dx \leq 
    F \int_0^R \varphi_{n+1}^2   (t, x) dx. 
\end{split}
\end{equation}
Owing to the Gronwall Lemma,~\eqref{e:moltiplico2} implies that 
$$
   \int_0^R \varphi_{n+1}^2 (t, x) dx \leq 
    e^{Ft}\int_0^R \varphi_0^2 (x) dx + K e^{Ft}
$$
and by plugging the above inequality into~\eqref{e:moltiplico2} and integrating with respect to time we arrive at~\eqref{e:elleduek}. \\
\step{we establish~\eqref{e:maxprink}} First, we define the $C^2$ function $\beta: \R \to \R$ by setting
\begin{equation}
\label{e:beta}
    \beta (w) : =
    \left\{
    \begin{array}{ll}
    w^4 & w <0, \\
    0 & w \ge 0.
    \end{array}
    \right.
\end{equation}
We multiply the equation at the first line
of~\eqref{eq:auxiliary-system} times $\beta'(\varphi_{n+1})$
and we integrate in space. By using the relation $\beta'(w) w= 4 \beta(w)$ and proceeding as at the previous step we infer that
\begin{equation}
\label{e:gl2}
    \frac{d}{dt} \int_0^R \beta \big(\varphi_{n+1} \big)(t, \cdot) dx
    \leq C(F) \int_0^R \beta \big(\varphi_{n+1} \big)(t, \cdot) dx.  
\end{equation}
By assumption~\eqref{e:condizionidatoiniziale}, $\varphi_0 \ge 0$ and hence $\beta (\varphi_0) =0$. This implies 
that by combining~\eqref{e:gl2} with the Gronwall Lemma we get that $ \beta (\varphi_{n+1}) \equiv 0$, namely $\varphi_{n+1} \ge 0$. This concludes the proof of the lemma. 
\end{proof}
\subsubsection{Limit analysis}
\label{sss:conex}
This paragraph aims at establishing the following result. 
\begin{lemma}
\label{l:compactness}
Assume that the hypotheses of the Theorem~\ref{p:classical} are satisfied 
and furthermore that ${\fhi_0 \in C^\infty ([0, R])}$. Let $\fhi_n$ be the sequence of functions recursively defined in \S~\ref{ss:exa}. Then 
\begin{equation}
\label{e:convergence}
     \fhi_n \to \fhi \quad \text{strongly in $L^2 (]0, T[ \times ]0, R[)$}
\end{equation}
for some limit function $\fhi$ satisfying 
\begin{equation}
\label{e:reglim}
   \| \varphi (t, \cdot) \|^2 
   _{L^2(]0, R[) } +  \int_0^{T} \int_0^R (\partial_x \varphi)^2 
   dx \, ds     \leq C(T, F) \| \varphi_0 \|^2_{L^2(]0, R[) }  
   \quad \text{for every $t \in [0, T]$.}
\end{equation}
Also, $\fhi$ is a weak solution of~\eqref{eq:auxiliary-system}.  
\end{lemma}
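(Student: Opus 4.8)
The plan is to run the standard compactness argument for monotone/iterative schemes: first establish that the sequence $\fhi_n$ is Cauchy in $L^2(]0,T[\times]0,R[)$ by estimating the difference $\fhi_{n+1}-\fhi_n$, then upgrade the a-priori bound \eqref{e:elleduek} (which is uniform in $n$) to obtain weak compactness in $L^2(]0,T[;H^1(]0,R[))$ together with a uniform bound on $\pt\fhi_n$ in $L^2(]0,T[;H^\ast(]0,R[))$, and finally pass to the limit in the weak formulation \eqref{e:distrform}.

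First I would prove \eqref{e:convergence}. Set $\sigma_{n+1}:=\fhi_{n+1}-\fhi_n$. Subtracting the equations \eqref{e:sy:itera} at levels $n+1$ and $n$, one sees that $\sigma_{n+1}$ solves a linear parabolic problem with homogeneous Neumann data, source term $(f_n-f_{n-1})\fhi_n + f_{n-1}\sigma_n - a\,\sigma_{n+1}$ (up to rearrangement), and zero initial datum. Multiplying by $\sigma_{n+1}$, integrating in space, dropping the favorable $-\int a\sigma_{n+1}^2\le 0$ term, and using that $f_n\le F$ together with the Lipschitz bound \eqref{eq:2.2} on $\p_\vfi f$ (so that $|f_n(t,x)-f_{n-1}(t,x)|\le\alpha_1|\sigma_n(t,x)|$) and the uniform $L^\infty$-type control coming from \eqref{e:elleduek} and \eqref{e:immersione}, a Gronwall argument in time gives a contraction-type estimate of the form
$$
   \sup_{s\in[0,t]}\|\sigma_{n+1}(s,\cdot)\|_{L^2(]0,R[)}^2
   \le C(\alpha_1,F,T,R,\|\fhi_0\|_{L^2})\int_0^t\|\sigma_n(s,\cdot)\|_{L^2(]0,R[)}^2\,ds,
$$
whence, iterating, $\sum_n\sup_{[0,T]}\|\sigma_n(\cdot)\|_{L^2}<\infty$ and $\fhi_n$ is Cauchy in $C^0([0,T];L^2(]0,R[))$, in particular convergent in $L^2(]0,T[\times]0,R[)$ to some $\fhi$. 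The honest technical point here is controlling $\|\fhi_n\|_{L^\infty}$ uniformly: one does not yet have the maximum principle \eqref{e:maxprin} for the iterates (that comes later in \S\ref{sss:concludiamo}), so one must get by with the $L^2(]0,T[;H^1)$ bound \eqref{e:elleduek}, which only yields $\int_0^t\|\fhi_n(s,\cdot)\|_{L^\infty}\,ds<\infty$ uniformly via \eqref{e:immersione} and \eqref{e:elledueelleuno}; this is exactly the situation already handled in Step~1 of the uniqueness proof, so the same Gronwall-with-integrable-coefficient device applies.

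Next, \eqref{e:reglim} follows by passing to the limit in \eqref{e:elleduek}: the bound is uniform in $n$, so $\fhi_n$ is bounded in $L^2(]0,T[;H^1(]0,R[))$, hence (up to a subsequence, but the full sequence converges by the previous step) $\fhi_n\weak\fhi$ weakly in that space, and lower semicontinuity of the norm gives \eqref{e:reglim}. For the time derivative, rewrite \eqref{e:sy:itera} as $\pt\fhi_{n+1}=\pxx\fhi_{n+1}+(f_n-a)\fhi_{n+1}$ and test against $H^1$ functions: $\|\pxx\fhi_{n+1}\|_{H^\ast}\le\|\px\fhi_{n+1}\|_{L^2}$ and $\|(f_n-a)\fhi_{n+1}\|_{L^2}$ is controlled by the same bounds, so $\pt\fhi_{n+1}$ is bounded in $L^2(]0,T[;H^\ast(]0,R[))$; thus the limit $\fhi$ satisfies the regularity \eqref{e:dentrospazi}.

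Finally, to see that $\fhi$ is a weak solution of \eqref{eq:auxiliary-system}, I would write the weak formulation of \eqref{e:sy:itera} for a test function $v\in C^\infty_c(]-\infty,T[\times\R)$ — it is \eqref{e:distrform} with $f(t,x,\fhi)\fhi$ replaced by $f_n(t,x)\fhi_{n+1}=f(t,x,\fhi_n(t,x))\fhi_{n+1}$ — and pass to the limit term by term. The linear terms pass to the limit by the weak $L^2(]0,T[;H^1)$ convergence (and the initial-datum term is unchanged since every iterate has datum $\fhi_0$). The only nonlinear term is $\int_0^T\!\!\int_0^R v\,f_n(t,x)\,\fhi_{n+1}\,dx\,dt$: here I would use the \emph{strong} $L^2$ convergence \eqref{e:convergence} of $\fhi_n$, the continuity of $f$ (hypothesis {\bf (H.1)}) together with the uniform $L^2(]0,T[;H^1)$ bound to get $f(t,x,\fhi_n)\to f(t,x,\fhi)$ in, say, $L^2_{\mathrm{loc}}$ after extracting a pointwise-a.e.\ subsequence and invoking dominated convergence on $f(\cdot,\cdot,\fhi_n)-f(\cdot,\cdot,0)$ via the Lipschitz bound \eqref{eq:2.2} plus $F$ from \eqref{e:finty}, and the strong $L^2$ convergence of $\fhi_{n+1}$ to pass to the limit in the product. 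I expect this nonlinear-term passage to the limit — reconciling the strong convergence of $\fhi_n$ with that of $\fhi_{n+1}$ and justifying $f_n\fhi_{n+1}\to f(\cdot,\cdot,\fhi)\fhi$ in $L^1$ — to be the main (though routine) obstacle; everything else is linear functional analysis. Once the limit identity \eqref{e:distrform} is established, the proof of Lemma~\ref{l:compactness} is complete.
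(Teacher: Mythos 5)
Your overall strategy differs from the paper's: the paper applies the Aubin--Lions Lemma to the uniform bounds \eqref{e:elleduek} and \eqref{e:arrivati!}, obtains a strongly convergent subsequence in $L^2(]0,T[\times]0,R[)$, identifies the limit as a weak solution, and then invokes the uniqueness already established in \S\ref{sss:uni} to upgrade to full-sequence convergence. You instead aim to prove directly that $\{\fhi_n\}$ is Cauchy in $C^0([0,T];L^2)$ via a Picard-type contraction on $\sigma_{n+1}=\fhi_{n+1}-\fhi_n$. This is a legitimate alternative and would avoid appealing to uniqueness for the full-sequence statement, but as written the key estimate does not close.

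Concretely, estimating the cross term as $\alpha_1\|\fhi_n(t,\cdot)\|_{L^\infty}\|\sigma_n(t,\cdot)\|_{L^2}\|\sigma_{n+1}(t,\cdot)\|_{L^2}$ and applying Gronwall yields
\begin{equation*}
\|\sigma_{n+1}(t,\cdot)\|^2_{L^2}\leq C\int_0^t\|\fhi_n(s,\cdot)\|_{L^\infty}\|\sigma_n(s,\cdot)\|^2_{L^2}\,ds,
\end{equation*}
\emph{not} the weight-free contraction $\|\sigma_{n+1}(t,\cdot)\|^2_{L^2}\leq C\int_0^t\|\sigma_n(s,\cdot)\|^2_{L^2}\,ds$ that you wrote. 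The weight $\|\fhi_n(s,\cdot)\|_{L^\infty}$ is only controlled in $L^2_s$ uniformly in $n$ (via \eqref{e:elleduek} and \eqref{e:immersione}), it changes with $n$, and iterated integrals with $n$-dependent merely-integrable weights do not produce the factorial $T^n/n!$ decay that your summability conclusion needs; a geometric estimate with ratio possibly larger than one is all you get. The ``Gronwall-with-integrable-coefficient'' device from \S\ref{sss:uni} is used there to conclude $\psi\equiv 0$ when the forcing vanishes identically --- it does not, by itself, deliver a clean $n$-independent contraction constant.

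The argument can be repaired while keeping your strategy: distribute the norms the other way, writing $\alpha_1\int_0^R|\sigma_n||\fhi_n||\sigma_{n+1}|\,dx\leq \alpha_1\|\sigma_{n+1}(t,\cdot)\|_{L^\infty}\|\fhi_n(t,\cdot)\|_{L^2}\|\sigma_n(t,\cdot)\|_{L^2}$, bound $\|\sigma_{n+1}\|_{L^\infty}$ by $C(R)\|\sigma_{n+1}\|_{H^1}$ using the one-dimensional embedding \eqref{e:immersione}, and use Young's inequality to absorb $\tfrac12\|\px\sigma_{n+1}\|^2_{L^2}$ into the diffusion term, which you should therefore retain on the left-hand side rather than discard. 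Since $\|\fhi_n(t,\cdot)\|_{L^2}$ is uniformly bounded in time by \eqref{e:elleduek}, this yields $\frac{d}{dt}\|\sigma_{n+1}\|^2_{L^2}\leq C\|\sigma_{n+1}\|^2_{L^2}+C\|\sigma_n\|^2_{L^2}$ with an honest constant $C=C(\alpha_1,F,T,R,\|\fhi_0\|_{L^2})$, and Gronwall plus iteration then give the factorial decay. The remaining steps (weak compactness for \eqref{e:reglim}, the $H^\ast$ bound on $\pt\fhi_n$, and passing to the limit in the nonlinear term) are essentially as in the paper, though for the last one the direct bound $|f_n-f(\cdot,\cdot,\fhi)|\leq\alpha_1|\fhi_n-\fhi|$ gives $f_n\to f(\cdot,\cdot,\fhi)$ strongly in $L^2$ at once, with no need for subsequence extraction or dominated convergence.
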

\begin{proof}
We proceed according to the following steps. \\
\firststep
\step{we establish compactness of the sequence $\varphi_n$} We first outline our argument: we want to apply the Aubin-Lions Lemma~\cite{Si}. We recall the Rellich
Theorem, which gives that the inclusion $H^1(]0, R[) \hookrightarrow L^2 (]0, R[)$ is compact.
 We term $H^\ast (]0, R[)$ the dual space of $H^1(]0, R[)$, endowed with the standard dual norm. By putting together all the previous considerations we conclude that we have the following chain of inclusions, 
$$
    H^1 (]0, R[) \hookrightarrow L^2 (]0, R[) \hookrightarrow H^\ast (]0, R[), 
$$ 
and the first inclusion is compact and the second is continuous. Owing to the Aubin-Lions Lemma, to establish the compactness of the sequence $\{ \fhi_n \}$ in $L^2 (]0, T[ \times ]0, R[)$ it suffices to show that
\begin{eqnarray}
\label{e:suffcond}
     \text{$\{ \fhi_n \}$ is uniformly bounded in $L^2 (]0, T[; H^1(]0, R[))$}, 
     \label{e:unibd1}
     \\
     \text{$\{ \pt \fhi_n \}$ is uniformly bounded in $L^2 (]0, T[; H^\ast (]0, R[))$}.
     \label{e:unibd2}  
\end{eqnarray} 
Condition~\eqref{e:unibd1} immediately follows from~\eqref{e:elleduek}. To establish~\eqref{e:unibd2} we multiply the equation at the first line of~\eqref{e:sy:itera} times a test function $v \in H^1 (]0, R[)$ and we integrate with respect to $x$.
By using the Integration by Parts Formula and recalling that $\fhi_{n+1}$ satisfies homogeneous boundary conditions we obtain 
$$
   \int_0^R \pt \fhi_{n+1} \cdot v dx=
   - \int_0^R \px \fhi_{n+1} \cdot  \px v dx -
    \int_0^R a \fhi_{n+1} v dx - 
    \int_0^R f_n \fhi_{n+1} v dx
$$ 
and hence 
\begin{equation}
  \label{e:boundsufhit}
  \begin{split}
    \left| \int_0^R \pt \fhi_{n+1} v dx \right| & \leq \| \px
    \fhi_{n+1} (t, \cdot) \|_{L^2 (]0, R[)} \| \px v \|_{L^2 (]0, R[)}
    \\
    & \quad + \| a(t, \cdot) \|_{L^1 (]0, R[)} \| \fhi_{n+1} (t, \cdot)
    \|_{L^\infty (]0, R[)}
    \| v \|_{L^\infty (]0, R[)} 
    \\
    & \quad + \|v \|_{L^\infty (]0, R[)} \left| \int_0^R
      f_n \fhi_{n+1} dx \right| \\
    & \stackrel{\eqref{e:immersione}}{\leq} \| \px \fhi_{n+1} (t,
    \cdot) \|_{L^2 (]0, R[)} \| v \|_{H^1(]0, R[)}
    \phantom{\int} \\
    & \quad + C(R) \sup_{t \in ]0, T[} \| a(t, \cdot) \|_{L^1 (]0,
      R[)} \| \fhi_{n+1} (t, \cdot) \|_{H^1 (]0, R[)}
    \| v \|_{H^1(]0, R[)} \\
    & \quad + C(R) \|v \|_{H^1 (]0, R[)} \left| \int_0^R f_n
      \fhi_{n+1} dx \right|.
  \end{split}
\end{equation}
Owing to~\eqref{e:boundsueffe}, 
\begin{equation}
\begin{split}
\left| \int_0^R f_n \fhi_{n+1} dx \right| & \stackrel{\eqref{e:boundsueffe}}{\leq}
\alpha_1 \int_0^R |\fhi_n \fhi_{n+1}| dx + 
F \int_0^R |\fhi_{n+1}| dx \\
&
 \stackrel{\text{H\"older},\eqref{e:elledueelleuno}}{\leq}
 \alpha_1 \| \fhi_n (t, \cdot) \|_{L^2 (]0, R[)}
 \| \fhi_{n+1} (t, \cdot) \|_{L^2 (]0, R[)} +
 C(R, F)  \| \fhi_{n+1} (t, \cdot) \|_{L^2 (]0, R[)} \\
 & \stackrel{\eqref{e:elleduek}}{\leq}  
  C(\alpha_1, T, R, F) 
 \| \fhi_0 \|_{L^2 (]0, R[)}.   
\end{split}
\end{equation}
 By plugging the previous inequality into~\eqref{e:boundsufhit} and using~\eqref{e:elleduek}
 we conclude that
 \begin{equation}
 \label{e:arrivati!}
     \| \partial_t \fhi_{n+1} \|_{L^2 (]0, T[); H^\ast (]0, R[)}
     \leq C(\alpha_1, T, F, R) 
      \| \fhi_0 \|_{L^2 (]0, R[)} 
      \Big[ 1+   \sup_{t \in ]0, T[} \| a(t, \cdot) \|_{L^1 (]0, R[)} 
      \Big].
 \end{equation}
 This establishes~\eqref{e:unibd2} and hence shows that the sequence $\{ \fhi_n \}$ is compact in $L^2 (]0, T[\times ]0, R[)$. \\
 \step{we show that every
   accumulation point $\varphi$ is a weak solution of~\eqref{eq:auxiliary-system}} Owing to {\sc Step 1} we have that, up to subsequences, 
 \begin{equation}
 \label{eq:convergence2}
    \fhi_n \to \fhi \quad \text{strongly in $L^2 (]0, T[\times ]0, R[)$}
 \end{equation}
 for some accumulation point $\fhi$. Note that, owing to~\eqref{e:elleduek}, the sequence 
 $\{ \px \fhi_n \}$ is uniformly bounded in $L^2 (]0,T[ \times ]0, R[)$
 and hence, by the Urysohn Lemma,
 \begin{equation}
 \label{eq:convergencew}
     \px \fhi_n \weak \px \fhi \quad \text{weakly in $L^2 (]0, T[\times ]0, R[)$}. 
 \end{equation}
Owing to~\eqref{e:elleduek} and to the lower semicontinuity of the norm with respect to weak convergence, we have that $\fhi$ satisfies~\eqref{e:reglim} and hence, in particular, 
$\fhi \in L^2 (]0, T[; H^1 (]0, R[))$. 

We now show that $\fhi$ is a weak solution of~\eqref{eq:auxiliary-system}. Owing to the uniqueness result established in~\S~\ref{sss:uni}, this implies that the accumulation point $\fhi$ is unique and hence that the
convergence~\eqref{eq:convergence2} holds for the whole sequence
$\{ \fhi_n \}$. 

First, we point out that, owing to~\eqref{eq:2.2}, 
$$
    |f_n (t, x) - f(t, x, \fhi) |
    \stackrel{\eqref{e:effenne}}{=}
    |f(t, x, \fhi_n) - f(t, x, \fhi)| \stackrel{\eqref{eq:2.2}}{\leq} 
    \alpha_1 |\fhi_n - \fhi| \quad 
    \text{for a.e. $(t, x) \in ]0, T[ \times ]0, R[$}
 $$
and hence, owing to~\eqref{eq:convergence2},  
\begin{equation}
\label{eq:convergence3}
    f_n \to f(\cdot, \cdot, \fhi) 
     \quad \text{strongly in $L^2 (]0, T[\times ]0, R[)$}. 
\end{equation}
To conclude, we fix a test function
$v \in C^\infty_c (]-\infty, T[ \times \R)$,
we use the equation at the first line of~\eqref{e:sy:itera}, we integrate in space and time and we arrive at 
\begin{equation*}
\begin{split}
\int_0^T \!\int_0^R &
 \left(\pt v \,  \vfi_{n+1}-\px v\, \px \vfi_{n+1}
  \right) dx dt-\int_0^T\! \int_0^R v \, \vfi_{n+1} \, a \, dx dt 
+\int_0^T \! \int_0^R v f_n \vfi_{n+1} \, dxdt \\
& \quad +
\int_0^Rv(0, \cdot )\, \vfi_0 \, dx=0.
\end{split}
\end{equation*}
Owing to~\eqref{eq:convergence2},~\eqref{eq:convergencew} and~\eqref{eq:convergence3}, we can pass to the limit in all the terms in the previous expression and obtain~\eqref{e:distrform}. This shows that $\fhi$ is a 
weak solution of~\eqref{eq:auxiliary-system} and concludes the proof of Lemma~\ref{l:compactness}.  
\end{proof}
\subsubsection{Conclusion of the proof of Theorem~\ref{p:classical}}
\label{sss:concludiamo}
First, we establish~\eqref{e:maxprin}. The bound from below is a direct consequence 
of~\eqref{e:maxprink} and~\eqref{eq:convergence2}. To establish the bound from above, we recall that $a(t, x) \ge 0$ for every $(t, x)$. Next, we point out that, 
owing to~\eqref{eq:2.3} and~\eqref{e:emme}, $f(t, x, M) \leq 0$. These considerations  
imply that the function identically equal to $M$ is a supersolution of the initial-boundary value problem~\eqref{eq:auxiliary-system} and hence that $\fhi \leq M$. 

To conclude the proof of Theorem~\ref{p:classical}, we are left to remove the assumption that $\fhi_0 \in C^\infty([0, R])$ from the existence part. We fix $\fhi_0 \in L^\infty(]0, R[)$, $\fhi_0 \ge0$, and we construct a sequence of functions $\{ \fhi_{0k} \}$ such that 
\begin{equation}
  \fhi_{0k} \in C^\infty([0, R]), \quad
  \fhi_{0k} \to \fhi_0 \; \text{strongly in $L^2(]0, R[)$}, \quad
  0 \leq \fhi_{0k} (x) \leq \| \fhi_{0} \|_{L^\infty (]0, R[)} \; 
  \forall x \in ]0, R[.
\end{equation}
We apply Lemma~\ref{l:compactness} and we term $\fhi_k$
the corresponding sequence of solutions of the initial-boundary
value problem~\eqref{eq:auxiliary-system}
with initial condition given by
$\fhi_{0k}$. Note that $\fhi_k$ satisfies 
\begin{equation}
\label{e:ibvpkappa}
\begin{split}
\int_0^T \!\int_0^R &
 \left(\pt v \,  \vfi_k-\px v\, \px \vfi_k
  \right) dx dt-\int_0^T\! \int_0^R v \, \vfi_k \, a \, dx dt 
+\int_0^T \! \int_0^R v (t, x) f (t, x, \fhi_k) \, dxdt \\
& \quad +
\int_0^Rv(0, \cdot )\, \vfi_{0k} \, dx=0.
\end{split}
\end{equation}
Next, we choose $\hat a=a$ and we use the stability estimate~\eqref{eq:th2.3}. We conclude that the sequence 
$\{ \fhi_k \}$ is a Cauchy sequence in $L^2 (]0, T[; H^1 (]0, R[)$. We term $\fhi$ its limit and by passing to the limit in~\eqref{e:ibvpkappa} we get~\eqref{e:distrform}. This shows that 
$\fhi$ is a weak solution of~\eqref{eq:auxiliary-system} and hence concludes the proof of Theorem~\ref{p:classical}.   
\section{A parabolic problem with measure-valued coefficients}
\label{sec:3}
In this section we establish the well-posedness of the following parabolic initial-boundary
value problem with a time-dependent, measure-valued coefficient $\mu_t$:
\begin{equation}
  \label{eq:2}
  \begin{cases}
    \pt\vfi = \pxx\vfi - \vfi \mu_t + f(t,x,\vfi) \fhi,
    & \text{in $]0, T[ \times ]0, R[$},
    \\
      \px \fhi(t,0)=\px\vfi(t,R) = 0, & t \in \, ]0, T[,
     \\
    \vfi(0,x)=\vfi_0(x),
    & x\in{]0,R[}.
  \end{cases}
\end{equation}
We introduce the following hypothesis on the coefficient $\mu_t$.
\begin{itemize}
\item[({\bf H.2})] \label{h:accatre}
The measured valued coefficient  $\mu_t$ satisfies the following conditions: 
\begin{itemize}
\item[i)] for a.e. $t \in ]0, T[$, we have $\mu_t \in \mathcal M^+ (]0, R[).$
\item[ii)] For every Borel set $B \subset ]0, R[$, the map
$
      t \mapsto \mu_t  (B)
$
is $\mathcal L^1$-measurable. 
\item[iii)] We have 
\begin{equation}
\label{e:h:finito}
    \mathrm{ess \; sup}_{0\le t\le T} \| \mu_t \|_{\mathcal M (]0,R[)} < + \infty. 
\end{equation}
\end{itemize}
\end{itemize}
In the following, we denote
by $L^\infty (]0, T[; \mathcal M(]0, R[))$
the space of time-dependent measures 
satisfying {\bf(H2)}. Also, we denote by  
$\mu$ the measure defined by setting
\begin{equation}
\label{e:cosaemu}
    \mu (E) : = \int_0^T \! \! \int_0^R 
    \mathbbm{1}_E (t, x) d \mu_t (x) dt,
\end{equation}
where $\mathbbm{1}_E $ denotes the characteristic function of $E$. 
Owing to~\cite[Proposition 2.26]{AmbrosioFuscoPallara}, $\mu$ is a Borel measure on $]0, T[ \times ]0, R[$. The following result is established by using standard techniques. For completeness, we provide a sketch of the proof. 
\begin{lemma}
\label{l:measure}
   Let $\fhi$ be a function satisfying~\eqref{e:dentrospazi}. Then $\fhi$ is $\mu$-measurable and summable and 
   \begin{equation}
   \label{e:Fubini}
       \int_{]0, T[ \times ]0, R[} \fhi(t, x) d \mu(t, x)
       =
       \int_0^T \! \! \int_0^R 
       \fhi(t, x) d \mu_t (x) dt.
   \end{equation} 
\end{lemma}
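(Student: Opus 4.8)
\textbf{Proof proposal for Lemma~\ref{l:measure}.}

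The plan is to prove the two assertions (measurability/summability of $\fhi$ and the Fubini-type identity~\eqref{e:Fubini}) by reducing to the classical disintegration/Fubini theorem for the measure $\mu$ on the product space $]0,T[\times]0,R[$, as it arises in~\eqref{e:cosaemu} from the family $\{\mu_t\}$. The point of hypothesis {\bf (H.2)} is precisely to make this reduction legitimate: conditions (i) and (ii) guarantee that $t\mapsto\mu_t$ is a well-defined measurable family of finite nonnegative measures, so that $\mu$ as defined in~\eqref{e:cosaemu} is a genuine Borel measure on the product (this is the cited~\cite[Proposition 2.26]{AmbrosioFuscoPallara}), and condition~\eqref{e:h:finito} together with the finiteness of $T$ gives the global bound
\[
   \mu\big(]0,T[\times]0,R[\big) \;=\; \int_0^T \|\mu_t\|_{\mathcal M(]0,R[)}\,dt
   \;\le\; T\,\mathrm{ess\,sup}_{0\le t\le T}\|\mu_t\|_{\mathcal M(]0,R[)} \;<\;+\infty,
\]
so $\mu$ is a finite Borel measure.

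First I would address the regularity of $\fhi$. By~\eqref{e:dentrospazi} and the remark immediately following Definition~\ref{d:uno}, $\fhi$ admits a representative in $C^0\big([0,T];L^2(]0,R[)\big)$; moreover, by~\eqref{e:immersione} the embedding $H^1(]0,R[)\hookrightarrow C^0([0,R])$ and the fact that $\fhi\in L^2(]0,T[;H^1(]0,R[))$ give that $\fhi(t,\cdot)\in C^0([0,R])$ for a.e.\ $t$, with $\|\fhi(t,\cdot)\|_{C^0([0,R])}\le C(R)\|\fhi(t,\cdot)\|_{H^1(]0,R[)}\in L^2(]0,T[)$. Combining the time-continuity into $L^2$ with the a.e.-in-time continuity in space, one shows that $\fhi$ has a Borel (indeed, after modification on a null set, even Carathéodory) representative on $]0,T[\times]0,R[$; since $\mu$ is a Borel measure, this representative is $\mu$-measurable. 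Its $\mu$-summability follows from the estimate
\[
   \int_{]0,T[\times]0,R[}|\fhi(t,x)|\,d\mu(t,x)
   \;\le\; \int_0^T \|\fhi(t,\cdot)\|_{C^0([0,R])}\,\|\mu_t\|_{\mathcal M(]0,R[)}\,dt
   \;\le\; C(R)\Big(\mathrm{ess\,sup}_t\|\mu_t\|_{\mathcal M}\Big)\sqrt{T}\,\|\fhi\|_{L^2(]0,T[;H^1)},
\]
where the inner integral is controlled using $|\fhi(t,x)|\le\|\fhi(t,\cdot)\|_{C^0([0,R])}$ for $\mu_t$-a.e.\ $x$, and then Hölder in $t$.

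With measurability and summability in hand, the identity~\eqref{e:Fubini} is obtained by the standard monotone-class (or "layer cake") argument: it holds by definition~\eqref{e:cosaemu} when $\fhi=\mathbbm 1_E$ is the indicator of a Borel set, hence by linearity for Borel simple functions, hence for nonnegative Borel functions by monotone convergence, and finally for the summable function $\fhi$ by splitting into positive and negative parts. The only genuine subtlety — and the step I expect to need the most care — is the first one, namely verifying that $\fhi$ possesses a jointly Borel-measurable representative so that all the iterated integrals above are meaningful and the monotone-class machinery applies to $\fhi$ itself and not merely to simple functions; this is where the combination of $C^0$-in-time regularity with a.e.-$H^1$-in-space regularity from~\eqref{e:dentrospazi} is used. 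Everything else is routine, which is why the statement is flagged as "established by using standard techniques" and only sketched.
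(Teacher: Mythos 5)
Your proof is correct in substance, but it takes a genuinely different route from the paper. The paper establishes $\mu$-measurability and the Fubini identity \emph{simultaneously} by an approximation argument: it mollifies $\fhi$ to get a sequence $\fhi_n$ of continuous (hence automatically $\mu$-measurable) functions with $\fhi_n\to\fhi$ in $L^2(0,T;H^1)$ and $\fhi_n(t,\cdot)\to\fhi(t,\cdot)$ in $C^0([0,R])$ for a.e.\ $t$; this gives $\mu$-a.e.\ pointwise convergence (hence $\mu$-measurability of $\fhi$), then the same estimate you wrote shows $\{\fhi_n\}$ is Cauchy in $L^1(\mu)$ (hence $\fhi\in L^1(\mu)$), and finally one passes to the limit in the Fubini identity for the $\fhi_n$'s, which holds for continuous functions by \cite[\S 2.5]{AmbrosioFuscoPallara}. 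You instead exhibit a jointly Borel (Carath\'eodory) representative of $\fhi$ directly from the regularity~\eqref{e:dentrospazi}, prove summability by the same estimate, and then run a monotone-class argument upward from indicator functions. Both routes work, and both ultimately rest on the same structural fact about $\mu$: since $\mu$ disintegrates over $\mathcal L^1\big|_{]0,T[}$, any subset of $N\times]0,R[$ with $\mathcal L^1(N)=0$ is $\mu$-null, even though $\mu$ can charge $\mathcal L^2$-null sets of the product. The one place where you should sharpen the wording is exactly here: when you say the Carath\'eodory representative is obtained ``after modification on a null set,'' you must say explicitly that this null set is a \emph{cylinder} $N\times]0,R[$ with $N$ Lebesgue-null in $]0,T[$ (namely the exceptional set of times $t$ at which $\fhi(t,\cdot)\notin C^0$), not merely an $\mathcal L^2$-null subset of the product --- the latter would not be $\mu$-null in general, so the modified representative could a priori have a different $\mu$-integral. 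With that made explicit, your argument is complete, and it is arguably cleaner and more self-contained than the paper's (no mollification needed), at the cost of invoking the Carath\'eodory joint-measurability lemma and the monotone class theorem rather than the more hands-on density argument.
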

Also, 
\begin{equation}
   \label{e:Fubini2}
      \left| \int_{]0, T[ \times ]0, R[} \fhi(t, x) d \mu(t, x) \right|
       \leq C(R)  \esssup_{0\le t\le T}
       \| \mu_t \|_{\mathcal M (]0,R[)} \| \fhi \|_{L^2 (]0, T[; H^1 (]0, R[)}.
  \end{equation} 
\begin{proof}
First, we recall that, owing to the inclusion
$H^1 (]0, R[)  \hookrightarrow C^0([0, R])$, the function $\fhi(t, \cdot)$ is
continuous for a.e. $t \in ]0, T[$.  
Next, we introduce a convolution kernel and we construct a sequence of continuous functions such that
\begin{eqnarray}
    \fhi_n \to \fhi \quad \text{in $L^2 (]0, T[; H^1 (]0, R[))$}, 
     \label{e:meas1} \\
    \fhi_n(t, \cdot) \to \fhi (t, \cdot) 
    \quad \text{in $C^0 ([0, R])$ for a.e. $ t \in ]0, T[$}.
    \label{e:meas2}
\end{eqnarray}
Owing to~\eqref{e:meas2}, we have 
\begin{equation}
\label{e:meas3}
\fhi_n (t, x) \to \fhi (t, x) 
\quad \text{for $\mu$-a.e. $(t, x)$}. 
\end{equation}
Since the functions $\fhi_n$ are continuous and henceforth $\mu$-measurable, the function $\fhi$ is $\mu$-measurable. We now show that 
$\fhi$ is $\mu$-summable. Owing to the analysis in~\cite[\S2.5]{AmbrosioFuscoPallara}, we have 
\begin{equation}
  \label{e:succdicosci}
  \begin{split}
    \left| \int_{]0, T[ \times ]0, R[} \right. & \left| \fhi_n -
      \fhi_m \right| (t, x) d \mu(t, x) 
    \stackrel{\text{\cite[\S2.5]{AmbrosioFuscoPallara}}}{=} 
      \int_0^T \! \! \int_0^R \left| \fhi_n - \fhi_m \right| (t, x) d
      \mu_t(x) dt 
    \\
    & \leq \int_0^T \| \fhi_n(t,\cdot) - \fhi_m (t,\cdot)\|_{C^0 ([0,
      R])}
    \| \mu_t \|_{\mathcal M (]0, R[)}dt
    \\
    & \stackrel{\eqref{e:immersione}}{\leq} C(R) \esssup_{0\le t\le T}
    \| \mu_t \|_{\mathcal M (]0,R[)} \int_0^T \| \fhi_n (t,\cdot)-
    \fhi_m(t,\cdot) \|_{H^1 (]0, R[)}
    dt
    \\
    & \stackrel{\eqref{e:elledueelleuno}}{\leq} C(R) \sqrt{T} \esssup_{0\le
      t\le T} \| \mu_t \|_{\mathcal M (]0,R[)}
    \| \fhi_n - \fhi_m \|_{L^2 (]0, T[; H^1 (]0, R[))} 
    \\
    & \stackrel{\eqref{e:meas1}}{\longrightarrow} 0 \quad \text{as $n,
      m \to + \infty$}.
  \end{split}
\end{equation}
This implies that $\{ \fhi_n \}$ is a Cauchy sequence in $L^1(]0, T[ \times ]0, R[, \mu)$. Owing to~\eqref{e:meas3}, the limit is $\fhi$, which is henceforth a $\mu$-summable function. Finally,  by passing to the limit in the equality 
$$
    \int_{]0, T[ \times ]0, R[}  \fhi_n (t, x) d \mu(t, x)  
      \stackrel{\text{\cite[\S2.5]{AmbrosioFuscoPallara}}}{=} 
       \int_0^T \! \! \int_0^R
       \fhi_n  (t, x) d \mu_t(x) dt  
      $$
      we arrive at~\eqref{e:Fubini} and~\eqref{e:Fubini2} follows by arguing as in~\eqref{e:succdicosci}. 
\end{proof}
We can now provide the definition of weak solution of~\eqref{eq:2}.
\begin{definition}
  \label{def:sol}
  Assume {\rm  {\bf (H1)}} and {\rm {\bf (H2)}}. 
  A function $\vfi:]0,T[ \times ]0,R[ \to\R$ is a \emph{weak solution} of~\eqref{eq:2} if $\fhi$ 
  satisfies~\eqref{e:dentrospazi} and for every test function $v \in C_c^\infty(]-\infty,T[ \times \R)$ we have
  \begin{equation}
\label{eq:2.9}
\begin{split}
\int_0^T \! \! \int_0^R & \left(\pt v \, \vfi - 
\px v\, \px \vfi\right) dx dt-\int_0^T \! \!  \int_0^Rv(t, x) \vfi(t, x)  d\mu_t(x)dt 
 +\int_0^T \! \!  \int_0^R v (t, x) \, f(t,x,\vfi) \fhi (t, x) dxdt \\
 & +\int_0^Rv(0,x)\vfi_0(x)dx=0.
\end{split}
\end{equation}
\end{definition}
Note that the second term in the above expression is well defined owing to Lemma~\ref{l:measure}. 
We now state the main result of the present section. 
\begin{theorem}
  \label{th:main1}
  Assume \emph{({\bf H.1})} and \emph{({\bf H.2})} and assume furthermore that the initial datum $\fhi_0$ satisfies~\eqref{e:condizionidatoiniziale}.
  Then the initial-boundary value problem \eqref{eq:2} admits a unique weak solution. Also, this solution enjoys the following 
  properties: first, 
  \begin{align}
    \label{eq:th2.1}
    &0\le \vfi(t,x) \le M ,\quad \text{for a.e.  $(t,x)\in ]0,T[ \times ]0,R[$},
  \end{align}
  where $M$ is the same constant as in~\eqref{e:emme}. Second, we have stability with respect to the initial datum and with respect to the coefficient $\mu$. Namely, if we term  
$\widehat\vfi$ the solution of the initial-boundary value problem
\begin{equation}
  \label{eq:fhicappuccio}
  \begin{cases}
    \pt \hat \vfi = \pxx \hat \vfi - \hat \vfi \hat \mu_t + f(t,x,\hat \vfi) \hat 
    \fhi,
    & \text{in $]0, T[ \times ]0, R[$},
    \\
      \px \hat \fhi(t,0)=\px  \hat \vfi(t,R) = 0, & t \in \, ]0, T[,
     \\
    \hat \vfi(0,x)=\hat \vfi_0(x),
    & x\in{]0,R[}.
  \end{cases}
\end{equation}
then  we have 
  \begin{equation}
    \label{e:stability2}
    \begin{split}
            & \norm{\vfi(t,\cdot)-  
            \widehat\vfi(t,\cdot)}^2_{L^2(]0,R[)}+  
             \int_0^t  \norm{ \partial_x \vfi(s,\cdot) -  
             \partial_x \widehat\vfi(s,\cdot)}^2_{L^2(]0,R[)}ds\\
    & \qquad \le 
    C(\alpha_1, M, T, R, F) 
    \Bigg[  \mathrm{ess \;sup}_{t \in [0, T]}
    \| \mu_t   - \hat \mu_t  \|^2_{\mathcal M(]0, R[)} + 
     \norm{\vfi_0-\widehat\vfi_0}^2_{L^2(]0,R[)}  
     \Bigg]
     \\
    &  \qquad \qquad \qquad 
    \text{for every $0\le t\le T$.} \phantom{\int} \\
\end{split}
  \end{equation} 
\end{theorem}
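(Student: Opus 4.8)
The strategy is to reduce to the smooth-coefficient result of Theorem~\ref{p:classical} by an approximation procedure for the existence part, and to run an energy estimate on the difference of two solutions for the uniqueness and stability part. Throughout, the feature that makes the measure-valued coefficient manageable is the one-dimensional Sobolev embedding \eqref{e:immersione}, which is moreover \emph{compact}: this is what both upgrades the Aubin--Lions compactness to strong convergence in $L^2(]0,T[;C^0([0,R]))$ and lets us control the terms containing $\mu_t$ by the $C^0$-norm of the solution, hence by its $H^1$-norm, which can then be absorbed into the parabolic dissipation.

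\emph{Existence.} First I would mollify the space-time measure $\mu$ of \eqref{e:cosaemu} (in space and, mildly, in time, so as to meet the smoothness requirement of Theorem~\ref{p:classical}) to produce smooth nonnegative functions $a_\eps$ on $]0,T[\times]0,R[$ with $\esssup_{t}\|a_\eps(t,\cdot)\|_{L^1(]0,R[)}\le \esssup_{t}\|\mu_t\|_{\mathcal M(]0,R[)}$ and $a_\eps(t,\cdot)\,dx\weakstar\mu_t$ for a.e.\ $t$, and apply Theorem~\ref{p:classical} with $a=a_\eps$ to obtain weak solutions $\fhi_\eps$ with $0\le\fhi_\eps\le M$. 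Multiplying the equation by $\fhi_\eps$, integrating, using that the $-\fhi_\eps^2 a_\eps$ term has the good sign and that $f(t,x,\fhi_\eps)\fhi_\eps\le F\fhi_\eps^2$ (by \eqref{eq:2.3}, \eqref{eq:2.2} and $0\le\fhi_\eps\le M$), Gronwall's lemma gives a uniform bound for $\fhi_\eps$ in $L^2(]0,T[;H^1(]0,R[))\cap L^\infty(]0,T[;L^2(]0,R[))$; testing against $v\in H^1(]0,R[)$ and estimating $|\int_0^R v\fhi_\eps a_\eps\,dx|\le C(R)M\,\esssup_t\|\mu_t\|_{\mathcal M}\,\|v\|_{H^1}$ via \eqref{e:immersione} and $\|\fhi_\eps(t,\cdot)\|_{L^\infty}\le M$ gives a uniform bound for $\pt\fhi_\eps$ in $L^2(]0,T[;H^\ast(]0,R[))$. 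Since in one dimension $H^1(]0,R[)$ embeds compactly into $C^0([0,R])$, which in turn embeds continuously into $H^\ast(]0,R[)$, the Aubin--Lions lemma gives, along a subsequence, $\fhi_\eps\to\fhi$ strongly in $L^2(]0,T[;C^0([0,R]))$ and $\px\fhi_\eps\weak\px\fhi$ weakly in $L^2$; in particular $\fhi$ satisfies \eqref{e:dentrospazi} and $0\le\fhi\le M$. One then passes to the limit in \eqref{eq:2.9} written for $(\fhi_\eps,a_\eps)$; the only delicate term is $\int_0^T\!\int_0^R v\fhi_\eps a_\eps\,dx\,dt$, which I would split as $\int_0^T\!\int_0^R v\fhi\,a_\eps\,dx\,dt+\int_0^T\!\int_0^R v(\fhi_\eps-\fhi)a_\eps\,dx\,dt$: the second piece is bounded by $\|v\|_{C^0}\esssup_t\|\mu_t\|_{\mathcal M}\int_0^T\|(\fhi_\eps-\fhi)(t,\cdot)\|_{C^0}\,dt\to 0$ by the strong $L^2(]0,T[;C^0)$ convergence and Cauchy--Schwarz, while the first piece tends to $\int_0^T\!\int_0^R v\fhi\,d\mu_t\,dt=\int v\fhi\,d\mu$ because for a.e.\ $t$ one has $v(t,\cdot)\fhi(t,\cdot)\in H^1(]0,R[)\hookrightarrow C^0([0,R])$, hence $\int_0^R v\fhi\,da_\eps(t,\cdot)\to\int_0^R v\fhi\,d\mu_t$, with integrable dominating function $t\mapsto C(R)\esssup_s\|\mu_s\|_{\mathcal M}\|v(t,\cdot)\fhi(t,\cdot)\|_{H^1}$ (recall Lemma~\ref{l:measure}). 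The remaining terms converge by the strong $L^2$ convergence of $\fhi_\eps$ and of $f(\cdot,\cdot,\fhi_\eps)\fhi_\eps$ (the latter by \eqref{eq:2.2} and $0\le\fhi_\eps\le M$) and by the weak $L^2$ convergence of $\px\fhi_\eps$, so $\fhi$ is a weak solution of \eqref{eq:2} and \eqref{eq:th2.1} holds; once uniqueness is proved the whole family converges.

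\emph{Uniqueness and stability.} Let $\psi:=\fhi-\hat\fhi$. Arguing as in the formal step of Subsection~\ref{sss:uni}, multiplying the equation for $\psi$ by $\psi$ and using the Neumann conditions yields the identity $\frac{d}{dt}\int_0^R\frac{\psi^2}{2}\,dx+\int_0^R(\px\psi)^2\,dx=\int_0^R\psi\,(\hat\fhi\,d\hat\mu_t-\fhi\,d\mu_t)+\int_0^R\psi\,[f(t,x,\fhi)\fhi-f(t,x,\hat\fhi)\hat\fhi]\,dx$. Writing $\hat\fhi\,d\hat\mu_t-\fhi\,d\mu_t=\hat\fhi\,d(\hat\mu_t-\mu_t)-\psi\,d\mu_t$ and discarding $-\int_0^R\psi^2\,d\mu_t\le 0$ (here $\mu_t\ge 0$ is used), the measure term is bounded by $\|\hat\fhi(t,\cdot)\|_{C^0}\|\psi(t,\cdot)\|_{C^0}\|\hat\mu_t-\mu_t\|_{\mathcal M}\le C(R)M\,\|\psi(t,\cdot)\|_{H^1}\|\hat\mu_t-\mu_t\|_{\mathcal M}$ by \eqref{e:immersione} and $0\le\hat\fhi\le M$, and a Young inequality with a small parameter absorbs $\frac{1}{2}\int_0^R(\px\psi)^2$ into the left-hand side, leaving a term $C(R,M)\|\hat\mu_t-\mu_t\|_{\mathcal M}^2$ and a term $C\int_0^R\psi^2$. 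The $f$-terms are handled exactly as in \eqref{e:effeinintegrale}--\eqref{e:uni:stima2}, using \eqref{eq:2.2} and $0\le\fhi,\hat\fhi\le M$. Gronwall's lemma then gives \eqref{e:stability2}, and taking $\mu_t\equiv\hat\mu_t$, $\fhi_0\equiv\hat\fhi_0$ forces $\psi\equiv 0$, i.e.\ uniqueness. As in Step~2 of Subsection~\ref{sss:uni}, this formal computation must be made rigorous by testing \eqref{eq:2.9} against $v(s,x)=w_n(s)z_j(x)$ with a time cut-off $w_n$ and $z_j\to\psi(t,\cdot)$ strongly in $H^1(]0,R[)$, letting $n\to\infty$ and then $j\to\infty$; the only point beyond Subsection~\ref{sss:uni} is passing to the limit in $\int_0^t\!\int_0^R z_j\fhi\,d\mu_s\,ds$ and the analogous term with $\hat\fhi$, which follows from Lemma~\ref{l:measure} and dominated convergence since $z_j\to\psi(t,\cdot)$ in $C^0([0,R])$.

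I expect the main obstacle to be precisely the handling of the terms involving $\mu_t$ — giving them meaning, estimating them, and passing to the limit in them — and the ingredient that makes everything go through is the compactness of the one-dimensional embedding $H^1(]0,R[)\hookrightarrow C^0([0,R])$, which is exactly what fails in higher dimensions and explains why the analysis is confined to a one-dimensional fishing domain. The remaining difficulties are routine: the measurability bookkeeping in the construction of the smooth approximations $a_\eps$ compatible with hypothesis \eqref{e:h:finito}, and the standard passage from the formal energy identity to its rigorous counterpart.
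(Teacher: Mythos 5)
Your proposal is correct and follows essentially the same route as the paper: mollify $\mu$ to smooth nonnegative $a_\eps$ satisfying the bounds in \eqref{e:akappa}, invoke Theorem~\ref{p:classical}, extract compactness via Aubin--Lions through the chain $H^1(]0,R[)\hookrightarrow C^0([0,R])\hookrightarrow H^\ast(]0,R[)$, and pass to the limit in the measure term by splitting it into the two pieces $T^j_1,T^j_2$ handled by dominated convergence, while uniqueness and stability come from the same energy estimate on $\psi=\fhi-\hat\fhi$ as in Subsection~\ref{sss:uni}. The only cosmetic differences are that the paper obtains the uniform $L^2(H^1)$ bound by applying the stability estimate \eqref{eq:th2.3} against the zero solution rather than by a direct energy computation, and the paper's decomposition of the coefficient term factors out $\hat a$ and $\fhi$ where you factor out $\mu_t$ and $\hat\fhi$ — both exploit the same sign information and the same $H^1\hookrightarrow C^0$ embedding to reach \eqref{e:stability2}.
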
 
\begin{proof}
To establish the uniqueness of the solution of~\eqref{eq:2} and the stability estimate~\eqref{e:stability2} we can follow the same argument as in \S~\ref{sss:uni}.
We are left to establish existence and we proceed according to the following steps. \\
\firststep
\step{we construct a sequence of approximate solutions} First, we take
a sequence $\{ a_j \}$ such that for every $j$ we have 
\begin{equation}
  \label{e:akappa}
  \begin{split}
    & \qquad \qquad \qquad a_j \in C^\infty ([0, T] \times [0, R]),
    \qquad a_j \ge 0, 
    \\ 
    & a_j (t, \cdot) \weaks \mu_t \; \text{
      weakly$^\ast$ in $\mathcal M (]0, R[)$, \quad $\| a_j (t, \cdot)
      \|_{L^1 (]0, R[)} \leq \esssup_t \| \mu_t \|_{\mathcal M(]0,
        R[)}$ for a.e. $t \in ]0, T[$}.
  \end{split}
\end{equation}
We apply Theorem~\ref{p:classical} and we term $\fhi_j$ the solution of the initial-boundary value problem~\eqref{eq:auxiliary-system} in the case when $a=a_j$.  By applying the stability estimate~\eqref{eq:th2.3} with 
$\hat a = a_j$, $\hat \fhi_0 =0$ we obtain that
\begin{equation}
\label{e:bdunifgei}
  \| \fhi_j (t, \cdot)\|^2_{L^2 (]0, R[)}
  + \int_0^T \int_0^R (\px \fhi_j )^2 dx dt \leq 
  C(\alpha_1, M, T, R, F)  \| \fhi_0 \|^2_{L^2 (]0, R[)}
  \quad \text{for a.e. $t \in ]0, T[$}. 
\end{equation}
\step{we establish compactness of the sequence $\{ \fhi_j \}$} We apply the Aubin-Lions Lemma. First, we point out  that 
we have the following chain of inclusions 
$$
    H^1 (]0, R[) \hookrightarrow C^0 ([0, R]) \hookrightarrow H^\ast (]0, R[), 
$$ 
and the first inclusion is compact and the second is continuous. Owing to the Aubin-Lions Lemma, to establish the compactness of the sequence $\{ \fhi_j \}$ in $L^2 (]0, T[; C^0 ([0, R])$ it suffices to show that
\begin{eqnarray}
     \text{$\{ \fhi_j \}$ is uniformly bounded in $L^2 (]0, T[; H^1(]0, R[))$}, 
     \label{e:unibd12}
     \\
     \text{$\{ \pt \fhi_j \}$ is uniformly bounded in $L^2 (]0, T[; H^\ast (]0, R[))$}.
     \label{e:unibd22}  
\end{eqnarray} 
To establish~\eqref{e:unibd12} we can use~\eqref{e:bdunifgei}. To establish~\eqref{e:unibd22}, we use the equation at the first line 
of~\eqref{eq:auxiliary-system}, repeat the same computations as in {\sc Step 1}  of the proof of Lemma~\ref{l:compactness}, arrive at~\eqref{e:arrivati!} and then use the forth estimate in~\eqref{e:akappa}. 

By relying on the Aubin-Lions Lemma we conclude that, up to subsequences, we have 
\begin{equation}
\label{e:al:convergenza1}
   \fhi_j  \to \fhi  \quad 
   \text{strongly in $L^2 (]0, T[; C^0 ([0, R]))$ }
\end{equation}
for some accumulation point  $\fhi  \in L^2 (]0, T[; C^0 ([0, R]))$
and hence, in particular, 
\begin{equation}
\label{e:al:convergenza2}
   \fhi_j  \to \fhi  \quad 
   \text{strongly in $L^2 (]0, T[ \times ]0, R[)$}.
\end{equation}
By extracting, if needed, a further subsequence, we have 
\begin{equation}
 \fhi_j (t, \cdot) \to \fhi (t, \cdot) 
     \; \text{uniformly in $ C^0 ([0, R])$ for a.e. $t \in [0, T]$}
     \\
     \label{e:convergence2} 
\end{equation}
and, owing to~\eqref{e:maxprin}, this implies, in particular, that $\fhi$ satisfies~\eqref{eq:th2.1}.  
Finally, we recall that $\px \fhi_j$ is uniformly bounded in $L^2 (]0, T[ \times ]0, R[)$ owing to~\eqref{e:bdunifgei} and we infer that, by the Urysohn Lemma,
\begin{equation}
\label{e:convergence3}
     \partial_x \fhi_j \weak \partial_x \fhi \;   
     \text{weakly in $L^2 ( ]0, T[ \times ]0, R[)$}.   
\end{equation}
By combining~\eqref{e:bdunifgei},~\eqref{e:convergence2} and the lower-semicontinuity of the norm with respect two weak convergence we infer that
$\fhi$ satisfies 
\begin{equation}
\label{e:bdunifgei2}
  \| \fhi (t, \cdot)\|^2_{L^2 (]0, R[)}
  + \int_0^T \int_0^R (\px \fhi)^2 dx dt \leq 
  C(\alpha_1, M, T, R, F)  \| \fhi_0 \|^2_{L^2 (]0, R[)}
  \quad \text{for a.e. $t \in ]0, T[$}. 
\end{equation}
\step{we show that the accumulation point $\fhi$ is a weak solution of~\eqref{eq:2}}
First, we point out that $\fhi \in L^2 (]0, T[; H^1 (]0, R[))$ owing to~\eqref{e:bdunifgei2}.
Next, we fix a test function $v \in C^\infty (]- \infty, T[ \times \R)$ and we recall that $\fhi_j$ satisfies ~\eqref{e:distrform}. We now pass to the limit in each of the terms in~\eqref{e:distrform}. 
Owing to~\eqref{e:al:convergenza2} and ~\eqref{e:convergence3}, we have 
\begin{equation}
  \label{e:primotermine}
  \int_0^T \! \!  \int_0^R \!\!
  \pt v \fhi_j dx dt \to \!\! 
  \int_0^T \! \!  \int_0^R \!\!
  \pt v \fhi dx dt, \,\,
  \int_0^T \! \!  \int_0^R \!\!
  \px v  \px \fhi_j dx dt \to \!\! 
  \int_0^T \! \!  \int_0^R \!\!
  \px v  \px \fhi dx dt, 
  \, \text{as $j \to + \infty$. }
\end{equation}
Next, we point out that
$$
 |f(t, x, \fhi_j) - f(t, x, \fhi)| \stackrel{\eqref{eq:2.2}}{\leq} 
    \alpha_1 |\fhi_j - \fhi| 
$$
and hence, owing to~\eqref{e:al:convergenza2}, 
\begin{equation}
\label{e:secondotermine}
   \int_0^T \! \!  \int_0^R
 f(t, x, \fhi_j) \fhi_j (t, x) v(t, x) dx dt \to 
  \int_0^T \! \!  \int_0^R
  f(t, x, \fhi) \fhi (t, x) v(t, x)
   dx dt
  \quad \text{as $j \to + \infty$. }
\end{equation}
Finally, we consider the last term. First, we point out that 
\begin{equation}
\label{e:al:ultimo}
\begin{split}
    \left| \int_0^R 
    a_j (t, x) \fhi_j  (t, x)dx 
    - \int_0^R  \fhi(t, x) d \mu_t (x)
    \right| & \leq 
    \underbrace{\left| \int_0^R 
    a_j (t, x) \fhi (t, x) dx
    - \int_0^R  \fhi(t, x) d \mu_t (x)
    \right|}_{T^j_1(t)} \\
    & +
    \underbrace{\left| \int_0^R 
    a_j \big[ \fhi_j  - \fhi \big] dx 
    \right| }_{T^j_2(t)}.
\end{split}
\end{equation}
We first deal with the term $T^j_1$: we recall that, owing to~\eqref{e:al:convergenza1}, the function $\fhi(t, \cdot)$ is continuous for a.e. $t \in ]0, T[$. We recall the third property in~\eqref{e:akappa} and we conclude that as $j \to + \infty$ $T^j_1 (t) \to 0$ for a.e. $t \in ]0, T[$. Next, we point out that for a.e. $t \in ]0, T[$
\begin{equation*}
\begin{split}
   T^j_1 (t) \leq 
   \big[ \| a_j (t, \cdot) \|_{L^1 (]0, R[)} + \| \mu_t \|_{\mathcal M(]0, R[)}
    \big] \| \fhi \|_{L^\infty (]0, R[)} 
   & \stackrel{\eqref{e:akappa}}{\leq} 
    2 \mathrm{ess \, sup}_{t \in ]0, T[} 
     \| \mu_t \|_{\mathcal M(]0, R[)} \| \fhi \|_{L^\infty (]0, R[)}\\
     &
    \stackrel{\eqref{eq:th2.1}}{\leq} 
    2 \mathrm{ess \, sup}_{t \in ]0, T[} \| \mu_t \|_{\mathcal M(]0, R[)} 
    M.
\end{split}
\end{equation*}
Owing to the Lebesgue Dominated 
Convergence Theorem, this implies that 
\begin{equation}
\label{e:tiuno}
 \int_0^T T^j_1 (t) dt \to 0 \quad \text{as $j \to + \infty$. }
\end{equation} 
Next, we deal with the term $T^j_2$: we first point out that 
\begin{equation}
\label{e:bdtidue}
\begin{split}
   T^j_2 (t) \leq 
   \| a_j (t, \cdot) \|_{L^1 (]0, R[)} \| \fhi_j - \fhi \|_{C^0 (]0, R[)}
   & \stackrel{\eqref{e:akappa}}{\leq} 
     \mathrm{ess \, sup}_{t \in ]0, T[} 
      \| \mu_t \|_{\mathcal M(]0, R[)}
     \| \fhi_j - \fhi \|_{C^0 (]0, R[)} 
     \end{split}
\end{equation}
 and hence, owing to~\eqref{e:convergence2}, for a.e. $t \in ]0, T[$,
 $T^j_2(t) \to 0$ as $j \to + \infty$.  By using again~\eqref{e:bdtidue} we get 
 $$
     T^j_2 (t) \leq 
     \mathrm{ess \, sup}_{t \in ]0, T[} 
      \| \mu_t \|_{\mathcal M(]0, R[)}
     \| \fhi_j - \fhi \|_{C^0 (]0, R[)} \stackrel{\eqref{eq:th2.1}}{\leq}
     2   \mathrm{ess \, sup}_{t \in ]0, T[} 
      \| \mu_t \|_{\mathcal M(]0, R[)} M
 $$
and hence, by applying the Lebesgue Dominated 
Convergence Theorem, 
we get 
\begin{equation}
\label{e:tidue}
   \int_0^T  T^j_2 (t) dt \to 0 
   \quad \text{as $j \to + \infty$}.
\end{equation}
By combining~\eqref{e:al:ultimo},~\eqref{e:tiuno} and~\eqref{e:tidue} we get that 
$$
    \int_0^T     \int_0^R 
    a_j (t, x) \fhi_j  (t, x)dx dt 
    - \int_0^T \int_0^R  \fhi(t, x) d \mu_t (x)
    dt \to 0 \quad \text{as $j \to + \infty$}.
$$
We recall~\eqref{e:primotermine} and~\eqref{e:secondotermine} and we eventually conclude that $\fhi$ satisfies~\eqref{eq:2.9}. This concludes the proof of Theorem~\ref{th:main1}.  
\end{proof}
\section{Necessary conditions for optimality}
\label{sec:4}
This section aims at discussing existence of optimal strategies $\mu$ 
for the payoff functional 
\begin{equation}
  \label{eq:3.1bis}
  \begin{split}
    & \qquad J : L^\infty
    \big(]0, T[; \mathcal M_+ (]0, R[) \big) \to \R
    \\
    & J(\mu):=\int_0^T \! \! \! \int_0^R \vfi(t,x) d\mu_t(x)dt-
    \Psi\left(\int_0^T \! \! \! \int_0^R c(t,x)d\mu_t(x)dt\right).
  \end{split}
\end{equation}
In the above expression, $\fhi$ is the weak solution of the initial-boundary value problem~\eqref{eq:2}. Note that $\fhi$ depends on $\mu$, and hence the functional $J$ is nonlinear. The functions $c$ and $\Psi$ satisfy the following hypotheses. 
\begin{itemize}
\item[({\bf H.3})] \label{h:acca3} The function $c: [0, T] \times [0, R]
\to \R^+ \cup \{+ \infty \}$ is lower semicontinuous. 
\end{itemize}
Note that hypothesis {\bf (H.3)} implies that the function $c$ is $\mu$-integrable (in the sense of~\cite[p.8]{AmbrosioFuscoPallara}) and hence that the second term in~\eqref{eq:3.1bis} is well-defined. 
\begin{itemize}
\item[({\bf H.4})] \label{h:accaquattro} The function
  $\Psi: \R \cup\{+\infty\} \to \R$ is twice
  continuously differentiable, nondecreasing and convex. \\
\end{itemize}
In the following we aim at discussing the existence and uniqueness of an optimal $\mu$ for $J$ under the constraint 
\begin{equation}
\label{eq:3.2}
\int_0^R b(t,x)  d\mu_t(x) \leq  1 \quad \text{for a.e. $t \in ]0, T[$}. 
\end{equation}
The function $b$ satisfies the following hypothesis: 
\begin{itemize}
\item[({\bf H.5})] 
\label{h:acca5}
The function $b: [0, T] \times [0, R]
\to \R \cup \{+ \infty \}$ is lower semicontinuous and bounded away from $0$, namely
\begin{equation}
\label{eq:3.3}
b(t,x)\ge b_0>0, \quad \hbox{for every~} (t,x) \in [0,T]\times[0,R]
\end{equation}
for some suitable constant $b_0 >0$. 
\end{itemize}
Note that $b$ is $\mu_t$-integrable because it is lower semicontinuous and positive, and hence the integral at the left hand side of~\eqref{eq:3.2} is well-defined. 
Note furthermore that by combining~\eqref{eq:3.3} with~\eqref{eq:3.2} we get 
\begin{equation}
\label{e:conscons}
    \mathrm{ess \, sup}_{t \in ]0, T[} \| \mu_t \|_{\mathcal M (]0, R[)} \leq \frac{1}{b_0}. 
\end{equation}
We now focus on the problem
\begin{equation}
  \label{eq:3.1}
  \text{maximize $J(\mu)$, defined as in~\eqref{eq:3.1bis}
    among $\mu \in L^\infty 
    \big(]0, T[; \mathcal M_+ (]0, R[) \big)$ satisfying~\eqref{eq:3.2}}.      
\end{equation}
The following result establishes the existence of an optimal strategy $\mu^\ast$
for problem~\eqref{eq:3.1}. 
\begin{proposition}
\label{th:main2} Assume \emph{{\bf (H.1)}-{\bf (H.5)}}. Then 
the optimization problem \eqref{eq:3.1} admits an optimal solution $\mu^\ast$.
\end{proposition}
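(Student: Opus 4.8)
The plan is to use the direct method of the calculus of variations. Let $\{\mu^k\}_{k \in \N}$ be a maximizing sequence for $J$ among the admissible competitors, i.e. among $\mu^k \in L^\infty(]0, T[; \mathcal M_+(]0, R[))$ satisfying~\eqref{eq:3.2}; since $J(0) = - \Psi(0) > -\infty$ and the constant strategy $\mu \equiv 0$ is admissible, the supremum $S := \sup J$ satisfies $S \ge - \Psi(0) > -\infty$, and we may assume $J(\mu^k) \to S$. The first step is to extract a converging subsequence. By~\eqref{e:conscons}, the admissibility constraint~\eqref{eq:3.2} together with~\eqref{eq:3.3} forces $\mathrm{ess\,sup}_{t} \| \mu^k_t \|_{\mathcal M(]0, R[)} \le 1/b_0$ uniformly in $k$. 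Hence, viewing each $\mu^k$ as the Borel measure $\mu^k$ on $]0,T[\times]0,R[$ defined via~\eqref{e:cosaemu}, the sequence $\{\mu^k\}$ is bounded in $\mathcal M(]0, T[ \times ]0, R[)$ with total variation $\le T/b_0$, so by weak-$\ast$ compactness of bounded sets in the dual of $C_0$ we may pass to a subsequence (not relabeled) with $\mu^k \weakstar \mu^\ast$ weakly-$\ast$ as measures on $]0,T[\times]0,R[$. One then checks, using a disintegration argument, that $\mu^\ast$ can be written in the form~\eqref{e:cosaemu} for a time-dependent measure $\mu^\ast_t \in L^\infty(]0, T[; \mathcal M_+(]0, R[))$, still satisfying~\eqref{eq:3.2}: nonnegativity is preserved under weak-$\ast$ limits, and the constraint $\int_0^R b\, d\mu^\ast_t \le 1$ passes to the limit for a.e.\ $t$ by testing against $b \mathbbm 1_{[t_1,t_2]}$ and using the lower semicontinuity of $b$ (hypothesis~\textbf{(H.5)}) together with the fact that $b$ is bounded below by $b_0 > 0$, so $b \mathbbm 1_{[t_1,t_2]}$ is a legitimate lower-semicontinuous nonnegative weight.

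The second step is to pass to the limit in each term of $J(\mu^k)$. For the harvesting term $\int_0^T \! \int_0^R \vfi^k(t,x)\, d\mu^k_t(x)\, dt$, we use the stability estimate~\eqref{e:stability2} of Theorem~\ref{th:main1}. Indeed, let $\vfi^k$ be the weak solution of~\eqref{eq:2} with coefficient $\mu^k$, and $\vfi^\ast$ the weak solution with coefficient $\mu^\ast$. From~\eqref{e:stability2} applied with $\hat\mu_t = \mu^\ast_t$, $\hat\vfi_0 = \vfi_0$ one gets that $\vfi^k \to \vfi^\ast$ strongly in $L^2(]0,T[; H^1(]0,R[))$, \emph{provided} $\mathrm{ess\,sup}_t \| \mu^k_t - \mu^\ast_t \|_{\mathcal M(]0,R[)} \to 0$; however, weak-$\ast$ convergence of measures does \emph{not} give convergence in total-variation norm, so this naive route fails. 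This is the main obstacle. The fix is to argue directly: by the uniform bounds of Theorem~\ref{th:main1} the family $\{\vfi^k\}$ is bounded in $L^2(]0,T[; H^1(]0,R[))$ with $\partial_t \vfi^k$ bounded in $L^2(]0,T[; H^\ast(]0,R[))$ (the latter via the equation and Lemma~\ref{l:measure}, exactly as in Step~2 of the proof of Theorem~\ref{th:main1}), so by Aubin--Lions $\vfi^k \to \vfi^\ast$ strongly in $L^2(]0,T[; C^0([0,R]))$ along a subsequence; one identifies the limit as the weak solution $\vfi^\ast$ associated with $\mu^\ast$ by passing to the limit in the weak formulation~\eqref{eq:2.9} (the product term $\int v \vfi^k d\mu^k_t\, dt$ converges because $v\vfi^k \to v\vfi^\ast$ uniformly and $\mu^k \weakstar \mu^\ast$). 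Then $\int_0^T\!\int_0^R \vfi^k\, d\mu^k_t\, dt \to \int_0^T\!\int_0^R \vfi^\ast\, d\mu^\ast_t\, dt$ by the same uniform-convergence-times-weak-$\ast$-measure argument.

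The third step handles the cost term. Since $c$ is lower semicontinuous, nonnegative (hypothesis~\textbf{(H.3)}) and $\mu^k \weakstar \mu^\ast$, lower semicontinuity of the integral functional under weak-$\ast$ convergence of measures gives
\[
\int_0^T \!\! \int_0^R c(t,x)\, d\mu^\ast_t(x)\, dt \;\le\; \liminf_{k\to\infty} \int_0^T \!\! \int_0^R c(t,x)\, d\mu^k_t(x)\, dt.
\]
Because $\Psi$ is nondecreasing and continuous (hypothesis~\textbf{(H.4)}), it follows that
\[
\Psi\!\left(\int_0^T \!\! \int_0^R c\, d\mu^\ast_t\, dt\right) \;\le\; \liminf_{k\to\infty}\, \Psi\!\left(\int_0^T \!\! \int_0^R c\, d\mu^k_t\, dt\right),
\]
so that $-\Psi(\cdot)$ is upper semicontinuous along the sequence. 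Combining with the convergence of the harvesting term from Step~2, we obtain
\[
J(\mu^\ast) \;\ge\; \limsup_{k\to\infty} J(\mu^k) \;=\; S,
\]
and since $\mu^\ast$ is admissible, $J(\mu^\ast) = S$, i.e.\ $\mu^\ast$ is an optimal solution of~\eqref{eq:3.1}. The only delicate points requiring care in the full write-up are the disintegration argument showing the weak-$\ast$ limit $\mu^\ast$ genuinely lies in $L^\infty(]0,T[;\mathcal M_+(]0,R[))$ with the pointwise-in-$t$ constraint~\eqref{eq:3.2} preserved, and the identification of the Aubin--Lions limit of $\vfi^k$ with the solution driven by $\mu^\ast$; both are routine given the a priori estimates of Theorem~\ref{th:main1} but should be spelled out.
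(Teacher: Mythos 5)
Your proposal follows the direct method of the calculus of variations, which is precisely how the paper proceeds (it omits the proof, pointing to \cite{BS1} and \cite{CG}); the structural elements — maximizing sequence, weak-$\ast$ compactness from~\eqref{e:conscons}, disintegration of the limit, preservation of the constraint via lower semicontinuity of $b$, Aubin--Lions compactness for the states $\vfi^k$, and upper semicontinuity of $-\Psi(\int c\,d\mu)$ — all match, and you correctly diagnose why the total-variation stability estimate~\eqref{e:stability2} cannot be used directly, since weak-$\ast$ convergence does not imply $\mathrm{ess\,sup}_t\|\mu^k_t-\mu^\ast_t\|_{\mathcal M}\to 0$.

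One step is mis-justified as written. You assert that $\int_0^T\!\int_0^R v\,\vfi^k\,d\mu^k_t\,dt$ converges ``because $v\vfi^k\to v\vfi^\ast$ uniformly.'' Aubin--Lions gives $\vfi^k\to\vfi^\ast$ only in $L^2(]0,T[;C^0([0,R]))$, and the limit $\vfi^\ast$ is not known to be jointly continuous in $(t,x)$, so $v\vfi^\ast$ cannot be taken directly as a test function against the weak-$\ast$ convergence $\mu^k\weaks\mu^\ast$. The correct argument splits
\[
\int v\,\vfi^k\,d\mu^k-\int v\,\vfi^\ast\,d\mu^\ast
=\int v\,(\vfi^k-\vfi^\ast)\,d\mu^k
+\Big[\int v\,\vfi^\ast\,d\mu^k-\int v\,\vfi^\ast\,d\mu^\ast\Big]:
\]
the first term is bounded by $\|v\|_\infty\,b_0^{-1}\,\|\vfi^k-\vfi^\ast\|_{L^1(]0,T[;C^0([0,R]))}\to 0$, and the second requires an intermediate approximation of $v\vfi^\ast$ in $L^1(]0,T[;C^0([0,R]))$ by a function jointly continuous on $[0,T]\times[0,R]$ before the weak-$\ast$ convergence is invoked (the error terms being again controlled by the uniform bound $\|\mu^k_t\|_{\mathcal M}\le 1/b_0$). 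A second, smaller slip: in passing the constraint~\eqref{eq:3.2} to the limit you should test against $b\,\mathbbm 1_{]t_1,t_2[}$ with an \emph{open} time interval (followed by Lebesgue differentiation), since $b\,\mathbbm 1_{[t_1,t_2]}$ is not lower semicontinuous. Both fixes are routine; the overall architecture is the right one and matches the argument the paper refers to.
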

The proof of Proposition~\ref{th:main2} is based on the same argument as the proof of    
Theorem 1 in~\cite{BS1} (see also~\cite[Theorem 4.1]{CG}) and so we omit it. 
The main result of the present section establishes Euler-type conditions for optimality. 
\begin{theorem}
  \label{th:Euler}
  Assume  \emph{{\bf (H.1)}-{\bf (H.5)}}. Let $\mu^*\in L^\infty\big(]0,T[;{\mathcal M}_+(]0,R[)\big)$ be an optimal strategy for the problem \eqref{eq:3.1} and $\fhi_\ast$ be the corresponding solution of 
~\eqref{eq:2}.  
For every $\nu\in L^\infty\big(]0,T[;{\mathcal M}_+(]0,R[)\big)$ there is 
$\fhi_1 \in L^2 (]0, T[; H^1 (]0, R[))$ that is a weak solution, in the sense of Definition~\ref{def:sol}, 
of the initial-boundary value problem
\begin{equation}
\label{eq:Euler}
\begin{cases}
\pt \fhi_1=\pxx \fhi_1-\fhi_1 \mu^\ast-\nu \fhi_\ast+ \partial_\fhi g (t, x, \fhi_*) \fhi_1  &
 \text{in $]0, T[ \times ]0, R[$}\\
\px \fhi_1(t,0)=\px \fhi_1(t,R)=0 & t \in ]0, T[  \\
\fhi_1(0,x)=0 & x\in ]0,R[ \\
\end{cases}
\end{equation}
and satisfies 
\begin{equation}
\label{eq:Euler-int}
\int_0^T  \! \!  \int_0^R \fhi_1 (t,x) d\mu^*_t(x)dt+
\int_0^T  \! \! \int_0^R  \fhi_\ast(t,x)
d\nu_t(x)dt-\Psi'
\left(\int_0^T \! \! \int_0^R c(t,x) d\mu^*_t(x)dt\right)
\int_0^T  \! \! \int_0^Rc(t,x) d\nu_t(x)dt \le 0.
\end{equation}
The function $g: \, [0, T] \times [0, R] \times \R 
\to \R$  at the first line of~\eqref{eq:Euler}, is defined by setting 
\begin{equation}
\label{e:cosaeg}
        g (t, x, \fhi) : = f(t, x, \fhi)\fhi. 
\end{equation}
\end{theorem}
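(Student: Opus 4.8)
The plan is to derive the Euler-type inequality \eqref{eq:Euler-int} by a standard variational (needle-type) perturbation of the optimal strategy, combined with a differentiability analysis of the map $\mu \mapsto \fhi$ and of the payoff $J$. The starting observation is that, given the optimal $\mu^\ast$ and any competitor $\nu \in L^\infty(]0,T[;\mathcal M_+(]0,R[))$, the convex combination $\mu^\varepsilon := (1-\varepsilon)\mu^\ast + \varepsilon \nu$ need not satisfy the constraint \eqref{eq:3.2}; but since $b$ is positive and $\mu^\ast$ satisfies the constraint, for $\varepsilon$ small $\mu^\varepsilon$ still satisfies $\int_0^R b\, d\mu^\varepsilon_t \le 1$ only if $\nu$ also does --- so instead one uses the admissible perturbation $\mu^\varepsilon := \mu^\ast + \varepsilon(\nu - \mu^\ast)$, which for $\varepsilon \in [0,1]$ is a convex combination of two admissible strategies and hence admissible by convexity of the constraint set. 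Optimality of $\mu^\ast$ then gives $J(\mu^\varepsilon) \le J(\mu^\ast)$, i.e. $\limsup_{\varepsilon \to 0^+} \frac{J(\mu^\varepsilon) - J(\mu^\ast)}{\varepsilon} \le 0$, and the whole task reduces to computing this one-sided directional derivative.

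The key steps, in order, are as follows. First, I would let $\fhi_\varepsilon$ be the weak solution of \eqref{eq:2} with coefficient $\mu^\varepsilon_t$, and show using the stability estimate \eqref{e:stability2} that $\fhi_\varepsilon \to \fhi_\ast$ in $L^2(]0,T[;H^1(]0,R[))$ as $\varepsilon \to 0^+$, with rate $O(\varepsilon)$ since $\|\mu^\varepsilon_t - \mu^\ast_t\|_{\mathcal M} = \varepsilon \|\nu_t - \mu^\ast_t\|_{\mathcal M}$. Second, I would set $\fhi_1^\varepsilon := (\fhi_\varepsilon - \fhi_\ast)/\varepsilon$ and write down the linear parabolic problem it solves: subtracting the two weak formulations, the source becomes, after dividing by $\varepsilon$, a combination of $-\fhi_1^\varepsilon \mu^\varepsilon_t$, $-(\nu_t - \mu^\ast_t)\fhi_\ast$ (the $\partial_t$ piece of $(\mu^\varepsilon - \mu^\ast)\fhi_\varepsilon$ split as $(\mu^\varepsilon-\mu^\ast)\fhi_1^\varepsilon\varepsilon + (\mu^\varepsilon - \mu^\ast)\fhi_\ast$... wait, more carefully: $\mu^\varepsilon \fhi_\varepsilon - \mu^\ast \fhi_\ast = \mu^\varepsilon(\fhi_\varepsilon - \fhi_\ast) + (\mu^\varepsilon - \mu^\ast)\fhi_\ast$, so after dividing by $\varepsilon$ one gets $\mu^\varepsilon \fhi_1^\varepsilon + (\nu - \mu^\ast)\fhi_\ast$), plus the linearized reaction term $[g(t,x,\fhi_\varepsilon) - g(t,x,\fhi_\ast)]/\varepsilon$, which by a Taylor expansion and the $C^2$ regularity of $f$ (hence of $g = f\fhi$) converges to $\partial_\fhi g(t,x,\fhi_\ast)\fhi_1$. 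I would then prove, via uniform a-priori estimates on $\fhi_1^\varepsilon$ in $L^2(]0,T[;H^1)$ with $\partial_t \fhi_1^\varepsilon$ bounded in $L^2(]0,T[;H^\ast)$ (obtained exactly as in Theorem~\ref{th:main1}, using $\esssup_t\|\mu^\varepsilon_t\|_{\mathcal M} \le 1/b_0$), that $\fhi_1^\varepsilon$ has a limit $\fhi_1$ which solves \eqref{eq:Euler} weakly; uniqueness for this linear problem (again by the energy argument of \S\ref{sss:uni}) forces the whole family to converge. Third, I would differentiate $J$: writing $J(\mu^\varepsilon) = \int_0^T\!\int_0^R \fhi_\varepsilon\, d\mu^\varepsilon_t\,dt - \Psi(\int_0^T\!\int_0^R c\, d\mu^\varepsilon_t\,dt)$, expand the first term as $\int \fhi_\ast\, d\mu^\ast + \varepsilon[\int \fhi_1\, d\mu^\ast + \int \fhi_\ast\, d(\nu - \mu^\ast)] + o(\varepsilon)$ using the $L^2(H^1)$-convergence of $\fhi_1^\varepsilon$ together with Lemma~\ref{l:measure} and the uniform bound on the measures, and expand the $\Psi$-term by the chain rule using that $\int c\, d\mu^\varepsilon_t$ is affine in $\varepsilon$ and $\Psi \in C^2$. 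Collecting the $\varepsilon$-coefficient and using $\int \fhi_\ast d(\nu - \mu^\ast) = \int \fhi_\ast d\nu - \int\fhi_\ast d\mu^\ast$ — noting the $\int\fhi_\ast d\mu^\ast$ and $\Psi'(\cdot)\int c\, d\mu^\ast$ contributions combine with a first-order optimality identity $\int\fhi_\ast d\mu^\ast = \ldots$ — actually one obtains directly the asserted inequality after observing that the directional derivative in the admissible direction $\nu - \mu^\ast$ being $\le 0$ gives precisely \eqref{eq:Euler-int}; I would be slightly careful here, since the statement's inequality is written with $\nu$ alone (not $\nu - \mu^\ast$), which is consistent because the $\mu^\ast$-contributions on the right-hand side correspond to testing the Euler inequality also with $\nu = 0$-type directions and cancel, or more precisely because the full derivative in direction $\nu-\mu^\ast$ decomposes and the statement isolates the $\nu$-part.

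The main obstacle I anticipate is the passage to the limit in the term $\int_0^T\!\int_0^R \fhi_1^\varepsilon\, d\mu^\varepsilon_t\, dt \to \int_0^T\!\int_0^R \fhi_1\, d\mu^\ast_t\, dt$, because it involves a \emph{product} of two quantities that are only converging weakly (the sequence $\fhi_1^\varepsilon$ converges strongly in $L^2(]0,T[;C^0([0,R]))$ by Aubin–Lions, but $a_j(t,\cdot)\weaks\mu_t$ type arguments do not directly apply since here $\mu^\varepsilon_t \to \mu^\ast_t$ in a weak-$\ast$-in-$t$-and-$x$ sense only). The resolution, as in Step 3 of the proof of Theorem~\ref{th:main1}, is to split the difference into a term controlled by $\|\fhi_1^\varepsilon - \fhi_1\|_{C^0}$ times the uniformly bounded total variation, and a term $\int \fhi_1\, d(\mu^\varepsilon_t - \mu^\ast_t)$ which vanishes because $\fhi_1(t,\cdot)$ is continuous for a.e.\ $t$ and $\mu^\varepsilon_t \weaks \mu^\ast_t$; then dominated convergence in $t$ (using the $1/b_0$ bound) finishes it. A secondary technical point is justifying the uniform-in-$\varepsilon$ bound on $\fhi_1^\varepsilon$: one must be slightly careful that the source term $(\nu - \mu^\ast)\fhi_\ast$ is a signed measure in space, so the energy estimate should be run exactly as in \S\ref{sss:uni}, estimating $\int_0^R \fhi_\ast v\, d(\nu_t - \mu^\ast_t)$ by $\|\nu_t - \mu^\ast_t\|_{\mathcal M}\|\fhi_\ast(t,\cdot)\|_{L^\infty}\|v\|_{L^\infty} \le C(R)/b_0 \cdot M \cdot \|v\|_{H^1}$ and absorbing the $H^1$-norm via Young's inequality; the $L^\infty$-bound $\|\fhi_\ast\|_{L^\infty}\le M$ from \eqref{eq:th2.1} is essential here.
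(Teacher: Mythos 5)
Your proposal follows the paper's broad strategy — linearize the state map along a perturbation of $\mu^\ast$, extract $\fhi_1$ via uniform $L^2(H^1)$/$L^2(H^\ast)$ bounds and Aubin--Lions, then pass to the limit in the difference quotient of $J$ — and your handling of the technical steps (the algebraic split $\mu^\varepsilon\fhi_\varepsilon-\mu^\ast\fhi_\ast=\mu^\varepsilon(\fhi_\varepsilon-\fhi_\ast)+(\mu^\varepsilon-\mu^\ast)\fhi_\ast$, the Taylor expansion of $g$, the energy estimate against the signed measure absorbed by Young, and the limit of $\int\fhi_1^\varepsilon\,d\mu^\varepsilon_t\,dt$ via the $L^2(C^0)$ convergence plus dominated convergence) matches the paper's. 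However, there is a genuine gap in the final step, caused by your change of perturbation direction.

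You replace the paper's perturbation $\mu^\ast+\varepsilon\nu$ by $\mu^\ast+\varepsilon(\nu-\mu^\ast)$ to guarantee admissibility, and then claim that the resulting directional-derivative inequality ``isolates the $\nu$-part'' and yields \eqref{eq:Euler-int}. This reconciliation does not work. Writing $\fhi_1[\eta]$ for the linearization in direction $\eta$, so that $\fhi_1[\nu-\mu^\ast]=\fhi_1[\nu]-\fhi_1[\mu^\ast]$ by linearity, your direction gives
\begin{equation*}
\int_0^T\!\!\int_0^R\fhi_1[\nu]\,d\mu^\ast_t\,dt+\int_0^T\!\!\int_0^R\fhi_\ast\,d\nu_t\,dt-\Psi'(\cdots)\int_0^T\!\!\int_0^R c\,d\nu_t\,dt
\ \le\
\int_0^T\!\!\int_0^R\fhi_1[\mu^\ast]\,d\mu^\ast_t\,dt+\int_0^T\!\!\int_0^R\fhi_\ast\,d\mu^\ast_t\,dt-\Psi'(\cdots)\int_0^T\!\!\int_0^R c\,d\mu^\ast_t\,dt,
\end{equation*}
i.e.\ the left-hand side of \eqref{eq:Euler-int} is bounded by the right-hand side of this display, \emph{not} by $0$. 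Moreover, taking $\nu=0$ in your inequality shows precisely that the right-hand side above is $\ge 0$, so the extra $\mu^\ast$-contributions do not cancel and have the wrong sign for the argument you sketch; from the variational inequality $DJ(\mu^\ast)[\nu-\mu^\ast]\le 0$ alone one cannot extract $DJ(\mu^\ast)[\nu]\le 0$ without an additional identity (essentially complementary slackness, or the equality case of \eqref{eq:Euler-int} at $\nu=\mu^\ast$), which you do not establish.

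The paper's own proof differentiates $J$ along $\mu^\ast+\varepsilon\nu$ directly, which immediately yields \eqref{eq:Euler-int}, at the cost of the (loosely stated) requirement that $\mu^\ast+\varepsilon_k\nu$ ``is a competitor provided that $\nu\ge 0$ is sufficiently small'', i.e.\ it implicitly restricts attention to directions for which the box constraint \eqref{eq:3.2} is not saturated at $\mu^\ast$. To repair your proposal you should either argue along the paper's direction under that admissibility caveat, or, if you keep the direction $\nu-\mu^\ast$, supply a separate argument (e.g.\ a Lagrange multiplier/complementary slackness step) showing that $\int\fhi_1[\mu^\ast]\,d\mu^\ast+\int\fhi_\ast\,d\mu^\ast-\Psi'(\cdots)\int c\,d\mu^\ast=0$. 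Everything else in your write-up is sound and consistent with the paper.
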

\begin{proof}
  Fix $\nu$ as in the statement of the theorem and, for every
  real number $\eps>0$,
  consider the quantity
  \begin{equation*}
    J(\mu^*+\eps \nu)=
    \int_0^T\int_0^R \vfi_\eps (t, x) d(\mu_t^*+\eps\nu_t)(x)dt
    -\Psi\left(\int_0^T\int_0^R c(t,x)d(\mu_t^*+\eps\nu_t)(x)dt\right).
  \end{equation*}
  In the previous expression, we term $\vfi_\eps$ the solution of the initial-boundary problem~\eqref{eq:2} in the case  $\mu = \mu^*+\eps\nu$. The heuristic idea to establish~\eqref{eq:Euler} is to differentiate both $J(\mu^*+\eps \nu)$ and $\fhi_\eps$ with respect to the variable $\eps$. The rigorous proof is organized into the following steps. \\
\firststep
\step{we construct an approximate derivative of $\fhi_*$}
For every $\eps > 0$, define
\begin{equation}
\label{e:cosaepsieps}
\psi_\eps=\frac{\vfi_\eps-\fhi_\ast}{\eps}.
\end{equation}
Note that $\psi_\eps$ is the weak solution, in the sense of Definition~\ref{def:sol}, of the initial-boundary value problem
\begin{equation}
\label{eq:psieps}
\begin{cases}
\pt\psi_\eps=\pxx\psi_\eps-\psi_\eps\mu^\ast-\vfi_\eps\nu+G_\eps(t, x),& \text{in $]0, T[ \times ]0, R[$}\\
\px \psi_\eps (t,0)=\px \psi_\eps (t,R)=0 & t \in ]0, T[  \\
\psi(0,x)=0 & x\in ]0,R[ \\\end{cases}
\end{equation}
provided that 
\begin{equation}
\label{e:cosaeGgrande}
G_\eps(t,x)= \frac{f (t, x, \fhi_\eps ) \fhi_\eps -
f (t, x, \fhi_\ast ) \fhi_\ast  }{\eps}.
\end{equation}
In other words, for every test function $v \in C^\infty_c (]-\infty, T[ \times \R)$ we have 
\begin{equation}
\label{e:distrform6}
    \begin{split}
\int_0^T \! \! \int_0^R 
&
 \left(\pt v \,  \psi_\eps -\px v\, \px \psi_\eps
  \right) dx dt-\int_0^T\! \! \int_0^R v \,\psi_\eps d \mu^*_t (x) dt 
- \int_0^T \! \! \int_0^R v  \, \vfi_\eps  \, d \nu_t(x) dt+
\int_0^T \! \! \int_0^R v \, G_\eps   dx dt =0. \\
\end{split}
\end{equation}
Note that 
\begin{equation}
\label{e:boundsuGgrande}
\begin{split}
  | G_\eps(t,x)| & 
  \stackrel{\eqref{e:cosaeGgrande}}{\leq}
  \frac{1}{\varepsilon}
  |f (t, x, \fhi_\eps ) \fhi_\eps -
f (t, x, \fhi_\eps ) \fhi_\ast | + 
  \frac{1}{\varepsilon} 
   |f (t, x, \fhi_\eps ) \fhi_\ast -
f (t, x, \fhi_\ast ) \fhi_\ast | \\
   & \stackrel{\eqref{eq:2.2}}{\leq}
   | f (t, x, \fhi_\eps )|
   \left| \frac{\fhi_\eps - \fhi_\ast}{\eps}  \right|
   + |\fhi_\ast| \alpha_1 
   \left| \frac{\fhi_\eps - \fhi_\ast}{\eps}  \right| \\
   &  \stackrel{\eqref{e:boundsueffe},~\eqref{e:cosaepsieps}}{\leq}
   \big[ \alpha_1 |\fhi_\eps| + F \big]
   |\psi_\eps| + \alpha_1 |\fhi_\ast| |\psi_\eps| \\
   & \stackrel{\eqref{eq:th2.1}}{\leq}
   \big[ \alpha_1M + F \big]
     |\psi_\eps|+ \alpha_1 M  |\psi_\eps| = C(\alpha_1, M, F)  |\psi_\eps|.  
\end{split}
\end{equation}
We now provide a formal argument, which can be made rigorous by following the same argument as in {\sc Step 2} of~\S~\ref{sss:uni}. 
We multiply the equation at the first line of~\eqref{eq:psieps} times $\psi_\eps$ and integrate by parts to get 
\begin{equation}
\label{e:chainmadrid}
\begin{split}
    \frac{d}{dt} 
    \int_0^R \frac{\psi_\eps^2}{2} dx + &
    \int_0^R (\px \psi_\eps)^2 dx +
    \int_0^R \psi^2_\eps d \mu_t^* (x) = 
    - \int_0^R \psi_\eps \fhi_\eps  d \nu_t (x)
    + \int_0^R G_\eps \psi_\eps  dx \\
    &
    \stackrel{\eqref{e:boundsuGgrande}}{\leq} 
    \| \fhi_\eps (t, \cdot) \|_{L^\infty (]0, R[)}
    \| \psi_\eps (t, \cdot) \|_{L^\infty (]0, R[)}
    \| \nu_t \|_{\mathcal M (]0, R[)} 
    + C(\alpha_1, M, F) 
    \int_0^R \! \! \psi^2_\eps  dx \\
    &
    \stackrel{\eqref{eq:th2.1}}{\leq} 
    M  \| \psi_\eps (t, \cdot) \|_{L^\infty (]0, R[)}
    \| \nu_t \|_{\mathcal M (]0, R[)} 
    + C(\alpha_1, M, F) 
    \int_0^R \! \! \psi^2_\eps  dx \\
    &
     \stackrel{\eqref{e:immersione}}{\leq} 
    C(M, R)  \| \psi_\eps (t, \cdot) \|_{H^1 (]0, R[)}
    \| \nu_t \|_{\mathcal M (]0, R[)} 
    + C(\alpha_1, M, F) 
    \int_0^R \! \! \psi^2_\eps  dx.
    \end{split}
\end{equation} 
Next, we fix a constant $k >0$
(to be determined in the following), we use the Young Inequality and we infer that 
\begin{equation}
\label{e:youngmadrid}
\begin{split}
    \| \psi_\eps (t, \cdot) \|_{H^1 (]0, R[)} & 
    \| \nu_t \|_{\mathcal M (]0, R[)}  \leq 
    \frac{k}{2} \| \psi_\eps (t, \cdot) \|^2_{H^1 (]0, R[)}+
    \frac{1}{2k} \| \nu_t \|^2_{\mathcal M (]0, R[)} \\
    & =  \frac{k}{2} \| \psi_\eps (t, \cdot) \|^2_{L^2 (]0, R[)}
    +  \frac{k}{2} \| \px \psi_\eps (t, \cdot) \|^2_{L^2 (]0, R[)}
    +  \frac{1}{2k} \| \nu_t \|^2_{\mathcal M (]0, R[)} .
    \end{split}
\end{equation} 
Next, we choose the constant $k$ in such a way 
that $C(M, R) k =1$ and by combining~\eqref{e:chainmadrid} and~\eqref{e:youngmadrid} we arrive at 
\begin{equation}
\label{e:chain2madrid}
\begin{split}
    \frac{d}{dt} 
    \int_0^R \frac{\psi_\eps^2}{2} dx + &
    \frac{1}{2}
    \int_0^R (\px \psi_\eps)^2 dx +
    \int_0^R \psi^2_\eps d \mu^\ast_t (x) \\
    & \leq 
    C(M,R)
    \| \nu_t \|^2_{\mathcal M (]0, R[)} 
    + C(\alpha_1, M, F, R) 
    \int_0^R \! \! \psi^2_\eps  dx.
    \end{split}
\end{equation}
Owing to the Gronwall Lemma and to the fact that $\psi_\eps (t=0) \equiv0$, the above inequality implies that 
\begin{equation}
\label{e:conclusione1}
      \int_0^R \psi^2_\eps (t, x)dx 
      \leq 
      C(\alpha_1, M, F, T, R) 
      \| \nu_t \|^2_{\mathcal M (]0, R[)} 
      \quad \text{for  every $t \in ]0, T[$.}
\end{equation}
By integrating~\eqref{e:chain2madrid} over time 
and using~\eqref{e:conclusione1} we eventually arrive at 
\begin{equation}
  \label{e:conclusione2}
  \int_0^R \psi^2_\eps (t, x)dx +
  \int_0^T \int_0^R (\px \psi_\eps)^2 dx ds +
  \int_0^T \int_0^R \psi^2_\eps d \mu_s^* (x) ds 
  \leq 
  C(\alpha_1, M, F, T, R) 
  \| \nu_t \|^2_{\mathcal M (]0, R[)} 
\end{equation}
for  every $t \in ]0, T[$.

\step{we establish compactness of the family $\{ \psi_\eps \}$}
We rely on the Aubin-Lions Lemma. We recall that 
$$
    H^1 (]0, R[) \hookrightarrow C^0 ([0, R]) \hookrightarrow H^\ast (]0, R[), 
$$ 
and the first inclusion is compact and the second is continuous. Owing to the Aubin-Lions Lemma, to establish the compactness of the family $\{ \psi_\eps \}$ in $L^2 (]0, T[; C^0 ([0, R])$ it suffices to show that
\begin{eqnarray}
     \text{$\{ \psi_\eps \}$ is uniformly bounded in $L^2 (]0, T[; H^1(]0, R[))$}, 
     \label{e:unibd12m}
     \\
     \text{$\{ \pt \psi_\eps \}$ is uniformly bounded in $L^2 (]0, T[; H^\ast (]0, R[))$}.
     \label{e:unibd22m}  
\end{eqnarray} 
To establish~\eqref{e:unibd12m} we use~\eqref{e:conclusione2}. To establish~\eqref{e:unibd22m}, we use the equation at the first line 
of~\eqref{eq:psieps}, we argue as in {\sc Step 1}  of the proof of Lemma~\ref{l:compactness} and eventually infer that
\begin{equation*}
  \begin{split}
    \| \pt \psi_\eps \|_{L^2 (]0, T[; H^\ast (]0, R[))} & \leq \|
    \psi_\eps \|_{L^2 (]0, T[; H^1 (]0, R[))} \Big[ K + C(R)
    \esssup_{t \in ]0, T[}
    \| \mu_t^* \|_{\mathcal M(]0, R[)}
    \\
    & \quad + C(\alpha_1, M, F) \Big] + C(M, R, T) \esssup_{t \in ]0,
      T[} \| \nu_t \|_{\mathcal M(]0, R[)}.
  \end{split}
\end{equation*}
Owing to the Aubin-Lions Lemma, we infer that the family $\{ \psi_\eps \}$
is compact in $L^2 (]0, T[; C^0 ([0, R]))$ and hence that there is a sequence $\psi_{\eps_k}$ and a function $\fhi_1 \in L^2 (]0, T[; C^0 (]0, R[))$ such that
\begin{eqnarray}
  \psi_{\eps_k}  \to 
  \fhi_1 \quad 
  \text{in $L^2 (]0, T[; C^0 ([0, R]))$} ,
  \label{e:aubin1} 
  \\
  \psi_{\eps_k} (t, \cdot) \to 
  \fhi_1 (t, \cdot) \quad 
  \text{uniformly in $C^0 ([0, R])$, 
    for a.e. $t \in ]0, T[$}. 
  \label{e:aubin2} 
\end{eqnarray}
By using the bound~\eqref{e:chain2madrid} we infer that the family $\{ \px \psi_\eps \}$ is weakly compact in $L^2 (]0, T[ \times ]0, R[)$ and hence that
\begin{equation}
\label{e:aubin3}
\px \psi_{\eps_k}  \weak 
       \px \fhi_1 \quad 
       \text{weakly in $L^2 (]0, T[\times ]0, R[)$}.
\end{equation}
\step{we show that the accumulation point $\fhi_1$ is a weak solution of~\eqref{eq:Euler}} We argue by passing to the limit in each of the terms in~\eqref{e:distrform6}. First, we point out that by using~\eqref{e:aubin1} and~\eqref{e:aubin3} we get 
\begin{equation}
\label{e:alimite1}
     \int_0^T \int_0^R( \pt v  \, \psi_{\eps_k}
     - \px v \, \px  \psi_{\eps_k} )dtdx\to 
     \int_0^T \int_0^R (\pt v \, \fhi_1
     - \px v \,  \px  \fhi_1  )dtdx.
\end{equation}
Next, we point out that 
\begin{equation}
  \label{e:alimite2}
  \begin{split}
    \left| \int_0^T \int_0^R v \right. & \left. \psi_{\eps_k} d
      \mu^\ast_t (x) dt - \int_0^T \int_0^R v \fhi_1 d \mu^\ast_t (x)
      dt \right|
    \\
    & \leq \int_0^T \| v \|_{C^0 ([0, R] \times [0,
      T])} \| \psi_{\eps_k} (t, \cdot) - \fhi_1(t, \cdot) \|_{C^0 ([0,
      R])}
    \| \mu_t^* \|_{\mathcal M (]0, R[)} dt 
    \\
    & \stackrel{\eqref{e:elledueelleuno}}{\leq} \esssup_{0\le t\le T}
    \| \mu_t \|_{\mathcal M (]0, R[)} \| v \|_{C^0 ([0, R] \times [0,
      T])} C(T) \| \psi_{\eps_k}  - \fhi_1
    \|_{L^2 (]0, T[; C^0 ([0, R]))}
    \\
    & \stackrel{\eqref{e:aubin1}}{\longrightarrow} 0.
  \end{split}
\end{equation}
We now want to show that 
\begin{equation}
\label{e:alimite3}
     \left|  \int_0^T \! \! \int_0^R v  \, \vfi_{\eps_k}  \, d \nu_t(x) dt -
      \int_0^T \! \! \int_0^R v  \, \fhi_\ast  \, d \nu_t(x) dt \right|
     \to 0.
\end{equation}
To this end, we recall that $\fhi_\eps$ is the weak solution of the initial-boundary problem~\eqref{eq:2} in the case when $\mu = \mu^*+\eps\nu$ and we apply the stability estimate~\eqref{e:stability2} with $\fhi= \fhi_\ast$, $\hat \fhi = \fhi_\eps$, $\mu = \mu^\ast$, $\hat \mu = \mu^\ast + \eps \nu$. By using~\eqref{e:immersione}, we infer that 
\begin{equation}
\label{e:alimite5}
    \fhi_{\eps_k} \to \fhi_\ast \quad 
    \text{in $L^2 (]0, T[; C^0 ([0, R]))$}
\end{equation}
and by arguing as in~\eqref{e:alimite2} we arrive 
at~\eqref{e:alimite3}. 
We are left with the last term in~\eqref{e:distrform6}. 
To handle it, we recall that $0 \leq \fhi_\ast, \fhi_\eps \leq M$ owing to~\eqref{eq:th2.1}, we use the Taylor formula with Lagrange remainder and we infer
that 
\begin{equation*}
  g(t, x, \fhi_\eps) =
  g(t, x, \fhi_\ast) + \partial_\fhi g (t, x, \fhi_\ast) [\fhi_\eps - \fhi_\ast]+
  \frac{1}{2} \partial^2_{\fhi \fhi} 
  g (t, x, y) [\fhi_\eps - \fhi_\ast]^2, \quad
  y \in [0, M]. 
\end{equation*}
Owing to~\eqref{e:cosaeg},~\eqref{e:cosaepsieps} and~\eqref{e:cosaeGgrande}, this implies that
\begin{equation}
  \label{e:alimite4}
  | G_\eps(t, x) -  \partial_\fhi g (t, x, \fhi_\ast) \psi_\eps | 
  \leq K \max_{(t, x, y) \in [0, T] \times [0, R] \times [0, M]}
  |\partial^2_{\fhi \fhi} 
  g (t, x, y)| |\psi_\eps| |\fhi_\eps -\fhi_\ast|     
\end{equation}
and hence that 
\begin{equation}
  \label{e:alimite42}
  \begin{split}
    \int_0^T \! \! \int_0^R | G_{\eps_k}(t, x) & - \partial_\fhi g (t,
    x, \fhi_\ast) \fhi_1 | dtdx \leq \int_0^T \! \! \int_0^R | G_{\eps_k}
    (t, x) - \partial_\fhi g (t, x, \fhi_\ast)
    \psi_{\eps_k} | dtdx 
    \\
    & \qquad + \int_0^T \! \! \int_0^R |
    \partial_\fhi g (t, x, \fhi_\ast)
    \psi_{\eps_k} - \partial_\fhi g (t, x, \fhi_\ast)  \fhi_1 | dtdx 
    \\
    & \stackrel{\eqref{e:alimite4}}{\leq} K \max_{(t, x, y) \in [0, T]
      \times [0, R] \times [0, M]} |\partial^2_{\fhi \fhi} g (t, x,
    y)| \int_0^T \! \! \int_0^R |\psi_{\eps_k}| |\fhi_{\eps_k}
    -\fhi_\ast| dtdx
    \\
    & \qquad + \max_{(t, x, y) \in [0, T] \times [0, R] \times [0, M]}
    |\partial_{ \fhi}
    g (t, x, y)| \int_0^T \! \! \int_0^R |\psi_{\eps_k} - \fhi_1| dtdx
    \\
    & \stackrel{\text{H\"older}}{\leq} K \max_{(t, x, y) \in [0, T]
      \times [0, R] \times [0, M]} |\partial^2_{\fhi \fhi} g (t, x,
    y)| \| \psi_{\eps_k} \|_{L^2} \| \fhi_{\eps_k} -
    \fhi_\ast \|_{L^2}   
    \\
    & \qquad + C(T,R) \max_{(t, x, y) \in [0, T] \times [0, R] \times
      [0, M]} |\partial_{ \fhi}
    g (t, x, y)| \| \psi_{\eps_k} - \fhi_1\|_{L^2}
    \\
    & \stackrel{\eqref{e:unibd12m},\eqref{e:aubin1},\eqref{e:alimite5}}
    {\longrightarrow}
    0.
  \end{split}
\end{equation}
This implies that 
$$
    \left|
    \int_0^T \! \! \int_0^R v \, G_{\eps_k}  dx dt  -
    \int_0^T \! \! \int_0^R v \, \partial_\fhi g (t, x, \fhi_\ast) 
    \fhi_1 dx dt 
    \right| \longrightarrow 0
$$
and show that $\fhi_1$ is a weak solution of~\eqref{eq:Euler}. \\
\step{we establish~\eqref{eq:Euler-int}} We recall that $\eps_k >0$,
that $\mu^\ast$ is an optimal strategy and $\mu^\ast + \eps_k \nu$
is a competitor provided that  $\nu\ge0$ is sufficiently small.
We conclude that 
\begin{equation*}
\frac{J(\mu^\ast+\eps_k \nu)-J(\mu^\ast)}{\eps_k}\le 0.
\end{equation*}
By using the explicit expression of $J(\mu^\ast+\eps_k \nu)$ we 
 infer 
\begin{align*}
\frac{J(\mu^\ast+\eps_k \nu)-J(\mu^\ast)}{\eps_k}=&\int_0^T\int_0^R \psi_{\eps_k} d\mu_t^*(x)dt+\int_0^T\int_0^R \vfi_{\eps_k} d\nu_t(x)dt\\
&-\frac{\Psi\left(\displaystyle{
\int_0^T\int_0^R c(t,x)d(\mu_t^*+\eps_k \nu_t)(x)dt} \right)-\Psi\left(
\displaystyle{
\int_0^T\int_0^R c(t,x)d\mu_t^*(x)dt} \right)}{\eps_k}.
\end{align*}
By combining the two above expressions we get 
\begin{align*}
0\ge &\int_0^T\! \! \int_0^R \psi_{\eps_k} d\mu_t^*(x)dt+\int_0^T\! \! \int_0^R \vfi_{\eps_k} d\nu_t(x)dt\\
&\qquad \qquad\qquad -\frac{\Psi\left(\displaystyle{
\int_0^T\! \! \int_0^R c(t,x)d(\mu_t^*+\eps_k \nu_t)(x)dt} \right)-\Psi\left(
\displaystyle{
\int_0^T\! \! \int_0^R c(t,x)d\mu_t^*(x)dt} \right)}{\eps_k} \\
& \stackrel{\eqref{e:alimite2},\eqref{e:alimite3}}{\longrightarrow}
\int_0^T\! \! \int_0^R \fhi_1 d\mu_t^\ast (x)dt+\int_0^T\! \!
\int_0^R \fhi_\ast d\nu_t(x)dt \\
& \qquad \qquad\qquad
- \Psi' \left(\int_0^T\! \! \int_0^R c(t,x)d\mu_t^\ast(x)dt\right)\int_0^T\! \! \int_0^R c(t,x)d\nu_t(x)dt,
\end{align*}
that is \eqref{eq:Euler-int}. This concludes the proof of Theorem~\ref{th:Euler}. 
\end{proof}

\section{Uniqueness of optimal solutions}
\label{sec:6}
In this section we discuss the uniqueness of optimal solutions of the optimization problem~\eqref{eq:3.1}. 
We refine hypothesis {\bf (H.1)} by introducing the following condition. 
\begin{itemize}
\item[({\bf H.6})]
\label{h:accasei}
The function $h$ in~\eqref{eq:2.3} is a constant. In other words, 
\begin{equation}
\label{eq:2.3bis}
    f(t, x, \fhi ) >0 \quad \text{if and only if $\fhi <h$}.
\end{equation}
\end{itemize}
We recall that the  function $\partial_\fhi f$ is continuous and by recalling~\eqref{eq:2.2} we define the constant
 \begin{equation}
 \label{e:accastar}
- h_*: = \max_{(t, x) \in [0, T ] \times [0, R]}
\p_\vfi f(t, x, h) < 0. 
\end{equation}
Also, in the following we term $\alpha_2$ a Lipschitz constant for 
$\p_\fhi f$, namely
\begin{equation}
\label{e:alpha2}
   |\p_\vfi f (t,x,\fhi_1) -\p_\vfi f(t,x,\fhi_2)|
   \leq \alpha_2 |\fhi_1-\fhi_2| \qquad
   \text{for every $(t, x) \in[0, T] \times [0, R]$, 
   $\fhi_1, \fhi_2 \in [0, M]$.}
\end{equation}
The main result of the present section states that, if the initial datum $\fhi_0$ is sufficiently small, then the solution of the optimization problem~\eqref{eq:3.1} is unique within a class of measures with sufficiently small total variation. 
\begin{theorem}
  \label{th:main3}
  Assume {\bf (H.1)}-{\bf (H.6)}. Then there is a constant $0<\delta<1$,
  which only depends on the constants $\alpha_1$, $\alpha_2$, $M$,
  $h$, $T$, $R$ and $h_\ast$ such that, if
  \begin{equation}
    \label{eq:initass}\mathrm{ess \; sup}_{t \in ]0, T[} 
    \|  \mu_t \|_{\mathcal M (]0, R[)} +
    \| \fhi_0 - h \|_{H^1 (]0, R[)} \le \delta \leq 1 \quad 
  \end{equation}
  then the solution $ \mu$ of the constrained optimization problem \eqref{eq:3.1} is unique. More precisely, assume 
  that $\tilde \mu$ and $\bar \mu$ are two points of maximum such that 
  \begin{equation}
    \label{e:muastast}
    \mathrm{ess \; sup}_{t \in ]0, T[}   
    \| \tilde \mu_t \|_{\mathcal M(]0, R[)}, 
    \mathrm{ess \; sup}_{t \in ]0, T[}   
    \| \bar \mu_t \|_{\mathcal M(]0, R[)}  \leq \delta. 
  \end{equation}
  Then $\tilde \mu = \bar \mu$.
\end{theorem}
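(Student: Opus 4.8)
The plan is to argue by contradiction, deducing uniqueness from the \emph{strict concavity} of the payoff functional $J$ along the segment joining two maximizers of small total variation. Suppose $\tilde\mu\neq\bar\mu$ are two points of maximum satisfying~\eqref{e:muastast}, set $\nu:=\bar\mu-\tilde\mu\not\equiv0$ and, for $\theta\in[0,1]$, let $\mu_\theta:=(1-\theta)\tilde\mu+\theta\bar\mu$; this is admissible for~\eqref{eq:3.1} (the constraint set is convex) and satisfies $\mathrm{ess\,sup}_{t}\|(\mu_\theta)_t\|_{\mathcal M(]0,R[)}\leq\delta$ and $\mathrm{ess\,sup}_{t}\|\nu_t\|_{\mathcal M(]0,R[)}\leq2\delta$. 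Let $\fhi_\theta$ be the weak solution of~\eqref{eq:2} associated to $\mu_\theta$ and set $\Phi(\theta):=J(\mu_\theta)$. Once we show $\Phi$ is strictly concave on $[0,1]$ we are done: $\Phi(0)=\Phi(1)=\max J$ and strict concavity would give $J(\mu_{1/2})=\Phi(1/2)>\tfrac12\big(\Phi(0)+\Phi(1)\big)=\max J$, a contradiction, forcing $\tilde\mu=\bar\mu$. A preliminary fact used repeatedly: comparing $\fhi_\theta$ with the solution of~\eqref{eq:2} associated to the zero measure and to the initial datum $h$ --- which is the constant $h$, since $f(t,x,h)=0$ by~\eqref{eq:2.3bis} --- the stability estimate~\eqref{e:stability2} together with~\eqref{eq:initass} yields $\|\fhi_\theta-h\|_{L^2(]0,T[;H^1(]0,R[))}\leq C\delta$, hence by~\eqref{e:immersione} that $\fhi_\theta$ is $O(\delta)$--close to $h$ in $L^2(]0,T[;C^0([0,R]))$; moreover $0\leq\fhi_\theta\leq M$ by~\eqref{eq:th2.1}, and for $\delta$ small $\fhi_\theta$ is bounded below by a positive constant.

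The next step is to differentiate twice in $\theta$. Arguing exactly as in the proof of Theorem~\ref{th:Euler} (building the derivative as a limit of difference quotients and passing to the limit through the Aubin--Lions lemma, then repeating the procedure once more), one checks that $\theta\mapsto\fhi_\theta$ is twice differentiable, that $\psi_\theta:=\partial_\theta\fhi_\theta$ (the function playing the role of $\fhi_1$ in Theorem~\ref{th:Euler}) is the weak solution, in the sense of Definition~\ref{def:sol}, of
\[
\pt\psi_\theta=\pxx\psi_\theta-\psi_\theta\mu_\theta-\fhi_\theta\nu+\partial_\fhi g(t,x,\fhi_\theta)\psi_\theta,\qquad\px\psi_\theta(t,0)=\px\psi_\theta(t,R)=0,\qquad\psi_\theta(0,\cdot)=0
\]
(this is~\eqref{eq:Euler} with $\mu^\ast,\fhi_\ast$ replaced by $\mu_\theta,\fhi_\theta$ and $\nu$ now signed), and that $\zeta_\theta:=\partial_\theta\psi_\theta$ solves
\[
\pt\zeta_\theta=\pxx\zeta_\theta-\zeta_\theta\mu_\theta-2\psi_\theta\nu+\partial_\fhi g(t,x,\fhi_\theta)\zeta_\theta+\partial^2_{\fhi\fhi}g(t,x,\fhi_\theta)\psi_\theta^2,\qquad\px\zeta_\theta(t,0)=\px\zeta_\theta(t,R)=0,\qquad\zeta_\theta(0,\cdot)=0,
\]
with $g$ as in~\eqref{e:cosaeg}. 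Writing $L(\theta):=\int_0^T\!\!\int_0^R c\,d(\mu_\theta)_t\,dt$ (affine in $\theta$) and $\ell:=\int_0^T\!\!\int_0^R c\,d\nu_t\,dt$, differentiation of $\Phi$ gives
\[
\Phi''(\theta)=\int_0^T\!\!\int_0^R\zeta_\theta\,d(\mu_\theta)_t\,dt+2\int_0^T\!\!\int_0^R\psi_\theta\,d\nu_t\,dt-\Psi''(L(\theta))\,\ell^2 .
\]
Since $\Psi$ is convex the last term is $\leq0$, so everything reduces to controlling the first two.

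The heart of the matter is the following pair of a-priori estimates, valid once $\delta$ is small. First, the \emph{sign estimate}
\[
2\int_0^T\!\!\int_0^R\psi_\theta\,d\nu_t\,dt\;\leq\;-\,c_0\,\mathcal E(\psi_\theta),\qquad \mathcal E(\psi_\theta):=\int_0^T\!\!\int_0^R\psi_\theta^2\,dx\,dt+\int_0^T\!\!\int_0^R(\px\psi_\theta)^2\,dx\,dt,
\]
for a constant $c_0=c_0(\alpha_1,\alpha_2,M,h,T,R,h_\ast)>0$. The underlying reason is that $\fhi_\theta\nu=-\big(\pt-\pxx+\mu_\theta-\partial_\fhi g(t,x,\fhi_\theta)\big)\psi_\theta$, so that testing against $\psi_\theta/\fhi_\theta$ and integrating by parts expresses the left-hand side, up to terms that are $O(\delta)\,\mathcal E(\psi_\theta)$ because $\|\fhi_\theta-h\|$ and $\|\mu_\theta\|$ are $O(\delta)$, as minus a positive multiple of $\int\fhi_\theta^{-1}(\px\psi_\theta)^2+\int\fhi_\theta^{-1}\big(-\partial_\fhi g(t,x,\fhi_\theta)\big)\psi_\theta^2$; here $\fhi_\theta>0$ and, since $\fhi_\theta$ is close to $h$, $-\partial_\fhi g(t,x,\fhi_\theta)=-\fhi_\theta\,\partial_\fhi f(t,x,\fhi_\theta)-f(t,x,\fhi_\theta)$ is close to $-h\,\partial_\fhi f(t,x,h)\geq h_\ast h>0$ by~\eqref{eq:2.3bis} and~\eqref{e:accastar} --- this is precisely where hypotheses~\eqref{eq:initass} and~\eqref{eq:2.3bis} enter. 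Second, the \emph{error estimate}
\[
\left|\int_0^T\!\!\int_0^R\zeta_\theta\,d(\mu_\theta)_t\,dt\right|\;\leq\;C\,\delta\,\mathcal E(\psi_\theta),
\]
obtained by representing $\zeta_\theta$ through the Duhamel formula and dualising against $\mu_\theta$: if $p_\theta$ solves the backward adjoint problem with source $\mu_\theta$, then $\int_0^T\!\!\int_0^R\zeta_\theta\,d(\mu_\theta)_t\,dt=-2\int_0^T\!\!\int_0^R\psi_\theta p_\theta\,d\nu_t\,dt+\int_0^T\!\!\int_0^R\partial^2_{\fhi\fhi}g(t,x,\fhi_\theta)\,\psi_\theta^2\,p_\theta\,dx\,dt$, and since $\|\mu_\theta\|_{\mathcal M}=O(\delta)$ the Duhamel/heat-kernel estimates give $p_\theta=O(\delta)$ in the relevant norms, whence both terms are $O(\delta)\,\mathcal E(\psi_\theta)$. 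These two estimates --- and in particular the $L^\infty$ and gradient bounds they require for solutions of parabolic equations with measure-valued coefficients --- are the technical core and the expected main obstacle; they are proved, via extensive use of the Duhamel representation formula and precise estimates on the fundamental solutions of the heat equation, in~\S\ref{s:pkey}.

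It remains to conclude. Combining the two estimates with $-\Psi''(L(\theta))\ell^2\leq0$ gives $\Phi''(\theta)\leq(C\delta-c_0)\,\mathcal E(\psi_\theta)$; since $c_0$ and $C$ depend only on $\alpha_1,\alpha_2,M,h,T,R,h_\ast$, choosing $\delta$ small enough that $C\delta<c_0$ (which fixes the dependence of $\delta$ asserted in the statement) we obtain $\Phi''(\theta)\leq-\tfrac{c_0}{2}\,\mathcal E(\psi_\theta)\leq0$ for every $\theta\in[0,1]$, with equality only if $\psi_\theta\equiv0$. But $\psi_\theta$ solves a linear parabolic problem with source $-\fhi_\theta\nu$ and $\fhi_\theta$ is uniformly positive, so $\psi_\theta\equiv0$ would force $\nu\equiv0$, contradicting $\nu\not\equiv0$. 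Hence $\mathcal E(\psi_\theta)>0$ and $\Phi''(\theta)<0$ for every $\theta$, so $\Phi$ is strictly concave on $[0,1]$; as observed at the outset this contradicts $\Phi(0)=\Phi(1)=\max J$, and therefore $\tilde\mu=\bar\mu$.
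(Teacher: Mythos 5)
Your proof mirrors the paper's argument in structure: argue by contradiction assuming $\tilde\mu\neq\bar\mu$, restrict $J$ to the segment between the two maximizers, show the restricted payoff is strictly concave via a second-derivative computation, and observe that this is incompatible with having maxima at both endpoints. The paper uses the normalization $\zeta\in[0,\varepsilon]$, $\nu_t=(\bar\mu_t-\tilde\mu_t)/\varepsilon$ so that $\mathrm{ess\,sup}_t\|\nu_t\|_{\mathcal M}=1$, where you use $\theta\in[0,1]$ and $\nu=\bar\mu-\tilde\mu$ with $\|\nu_t\|_{\mathcal M}\leq 2\delta$; this is immaterial. Your $\psi_\theta,\zeta_\theta$ are the paper's $\fhi_1,\fhi_2$, your $\Phi''(\theta)$ formula is the paper's~\eqref{e:jtwice}, your ``sign estimate'' is Lemma~\ref{le:6.3}, and your ``error estimate'' is the combination of Lemma~\ref{l:intfhi2} with~\eqref{eq:initass}. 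Your remark that $\psi_\theta\equiv0$ would force $\nu\equiv0$ (since the source $-\fhi_\theta\nu$ must then vanish and $\fhi_\theta\geq h/2>0$) is a small point that the paper leaves implicit in asserting $j''<0$ strictly.

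There is, however, one place where your sketch of the error estimate is misleading and would leave a gap if taken at face value. You dualize $\int\zeta_\theta\,d\mu_\theta$ against the adjoint $p_\theta$ and obtain two terms, then conclude that since $p_\theta=O(\delta)$ in $L^\infty$ ``both terms are $O(\delta)\,\mathcal E(\psi_\theta)$.'' This is fine for $\int\partial^2_{\fhi\fhi}g\,\psi_\theta^2\,p_\theta\,dx\,dt$, which is manifestly quadratic in $\psi_\theta$. But the other term, $\int_0^T\!\!\int_0^R\psi_\theta\,p_\theta\,d\nu_t\,dt$, is \emph{linear} in $\psi_\theta$; with only $\|p_\theta\|_{L^\infty}=O(\delta)$ you get a bound $\lesssim\delta\,\|\nu\|_\infty\|\psi_\theta\|_{L^2(H^1)}$, which is not of the form $O(\delta)\,\mathcal E(\psi_\theta)$, and the a-priori energy estimate only gives the wrong-direction inequality $\|\psi_\theta\|\lesssim\|\nu\|$. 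To make this term quadratic in $\psi_\theta$ you must substitute $\nu$ using the $\psi_\theta$-equation (i.e., $\fhi_\theta\nu=-(\pt-\pxx+\mu_\theta-\partial_\fhi g)\psi_\theta$), exactly the device you invoke for the sign estimate. This is precisely what the paper does in Lemma~\ref{l:iditi} inside the Duhamel integral $I(t)=2\sup_x|\int_0^t\int_0^R D(t-s,x,y)\fhi_1\,d\nu_s\,ds|$; the rest of that section is then devoted to controlling all the resulting terms via the heat-kernel estimates and the $O(\delta)$ smallness of $\fhi_\ast-h$, $\px\fhi_\ast$, and $\mu^\ast$. The paper also proceeds by a direct Duhamel representation of $\fhi_2$ rather than by introducing an adjoint state $p_\theta$; your adjoint reformulation is equivalent in principle, but it neither avoids the substitution nor the singular-kernel analysis, so it is not a simplification. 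Make sure you are aware that the substitution of $\nu$ is needed in \emph{both} key estimates, not just the sign estimate.
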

Note that, owing to~\eqref{e:conscons}, to achieve the bound 
$$
    \sup_{t \in ]0, T[} 
     \| \tilde \mu_t \|_{\mathcal M (]0, R[)} \leq \delta
$$
it suffices to have $1/b_0 \leq \delta$, where $b_0$ is the bound from below on the function $b$, see~\eqref{eq:3.3}, and is therefore a datum of the problem. 
\begin{proof}[Proof of Theorem~\ref{th:main3}]
We fix two points of maximum $\tilde \mu$ and $\bar \mu$ satisfying~\eqref{e:muastast} and we argue by contradiction: we assume that $\tilde \mu \neq \bar \mu$. We set 
\begin{equation}
\label{e:nueps}
   \varepsilon:= \mathrm{ess \; sup}_{t \in ]0, T[} \|
      \bar \mu_t - \tilde \mu_t \|_{\mathcal M(]0, R[)} \leq 2 \delta .
\end{equation}
We define the (signed) measure $\nu_t \in L^\infty (]0, T[; \mathcal M(]0, R[))$ by setting 
 \begin{equation}
 \label{e:nu}
     \nu_t : =
      \frac{\bar \mu_t - \tilde \mu_t}{\varepsilon}  .
  \end{equation}
We define the map $j: [0, \varepsilon] \to \R$ by setting 
\begin{equation}
\label{e:geipiccolo}
    j(\zeta) : = J(\tilde \mu + \zeta \nu) 
\end{equation}
and we point out that by construction $j$ attains its maximum at both $\zeta =0$ and $\zeta= \varepsilon$.   

Next, we use Lemma~\ref{l:concave} below and we conclude that the map $j$ is continuous and concave on $[0, \varepsilon]$. This contradicts the fact that $j$ attains its maximum at $\zeta=\varepsilon$ and hence concludes the proof of Theorem~\ref{th:main3}. 
\end{proof}
\begin{lemma}
\label{l:concave}
Let $j$ be the same function as in~\eqref{e:geipiccolo}. Then $j$ is continuous on $[0, \varepsilon]$ and twice differentiable in $]0, \varepsilon[$. Also, if the constant $\delta$ in~\eqref{e:muastast} is sufficiently small, then 
\begin{equation}
   j''(\zeta) <0 \quad \text{for every $\zeta \in ]0, \varepsilon[$.} 
\end{equation} 
\end{lemma}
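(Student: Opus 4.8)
\textbf{Proof plan for Lemma~\ref{l:concave}.}

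The plan is to compute the first and second derivatives of $j$ via the chain rule, reducing everything to properties of the map $\zeta \mapsto \fhi_\zeta$, where $\fhi_\zeta$ denotes the solution of~\eqref{eq:2} with coefficient $\mu = \tilde\mu + \zeta\nu$. First I would establish that $\zeta \mapsto \fhi_\zeta$ is differentiable (in the appropriate $L^2(]0,T[; H^1)$ sense) with derivative $\fhi_1 = \fhi_1[\zeta]$ solving the linearized problem~\eqref{eq:Euler} — this is essentially the content of {\sc Step 1}--{\sc Step 3} in the proof of Theorem~\ref{th:Euler}, applied at the base point $\tilde\mu + \zeta\nu$ rather than at an optimum, so continuity and differentiability of $j$ on $[0,\varepsilon]$ and $]0,\varepsilon[$ respectively follow from those arguments together with the regularity of $\Psi$ from {\bf (H.4)}. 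Differentiating the explicit formula for $J$ gives
\begin{equation*}
j'(\zeta) = \int_0^T\!\!\int_0^R \fhi_1 \, d(\tilde\mu_t + \zeta\nu_t)(x)\,dt + \int_0^T\!\!\int_0^R \fhi_\zeta \, d\nu_t(x)\,dt - \Psi'\!\left(\int_0^T\!\!\int_0^R c\, d(\tilde\mu_t+\zeta\nu_t)\right)\int_0^T\!\!\int_0^R c\, d\nu_t\,dt,
\end{equation*}
and a second differentiation produces $j''(\zeta)$ as a sum of: a term $2\int\!\!\int \fhi_1 \, d\nu_t\, dt$, a term involving the second-order derivative $\fhi_2$ of $\fhi_\zeta$ tested against $\tilde\mu + \zeta\nu$, and a manifestly nonpositive term $-\Psi''(\cdot)\left(\int\!\!\int c\, d\nu_t\right)^2 \le 0$ coming from the convexity of $\Psi$. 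So the whole difficulty is to show that the $\fhi_1$- and $\fhi_2$-terms combine into something strictly negative.

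The key structural point is to find the right energy identity. I would multiply the linearized equation~\eqref{eq:Euler} (written at base point $\tilde\mu+\zeta\nu$) by $\fhi_1$ and integrate, obtaining
\begin{equation*}
\frac{d}{dt}\int_0^R \frac{\fhi_1^2}{2}\,dx + \int_0^R (\px\fhi_1)^2\,dx + \int_0^R \fhi_1^2\, d(\tilde\mu_t+\zeta\nu_t)(x) = -\int_0^R \fhi_\zeta\fhi_1\, d\nu_t(x) + \int_0^R \partial_\fhi g(t,x,\fhi_\zeta)\fhi_1^2\,dx.
\end{equation*}
Integrating in time and using $\fhi_1(0,\cdot)=0$ shows that $\int_0^T\!\!\int \fhi_\zeta\fhi_1\, d\nu_t\,dt$ equals a sum of a negative definite part ($-\int\!\!\int(\px\fhi_1)^2 - \int\!\!\int\fhi_1^2\,d(\tilde\mu+\zeta\nu)$) plus the term $\int\!\!\int \partial_\fhi g(t,x,\fhi_\zeta)\fhi_1^2$. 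Here {\bf (H.6)} and the definition~\eqref{e:accastar} of $h_\ast$ enter: writing $g(t,x,\fhi)=f(t,x,\fhi)\fhi$, we have $\partial_\fhi g(t,x,h) = \partial_\fhi f(t,x,h) h \le -h_\ast h < 0$, and by continuity (controlled via the Lipschitz constant $\alpha_2$ from~\eqref{e:alpha2} and the proximity of $\fhi_\zeta$ to $h$) $\partial_\fhi g(t,x,\fhi_\zeta)$ stays strictly negative provided $\|\fhi_\zeta - h\|$ is small. The hypothesis~\eqref{eq:initass}, i.e.\ $\fhi_0$ close to $h$ in $H^1$ and $\mu_t$ small in total variation, is precisely what guarantees (via the stability estimate~\eqref{e:stability2}, comparing $\fhi_\zeta$ with the solution of the $\mu\equiv 0$ problem, itself close to the steady state $h$) that $\|\fhi_\zeta(t,\cdot) - h\|_{L^\infty}$ is uniformly small. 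So the $\partial_\fhi g$-term is also $\le 0$, making $\int\!\!\int \fhi_\zeta\fhi_1\, d\nu_t\,dt$ strictly negative unless $\fhi_1 \equiv 0$.

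For the $\fhi_2$-term I would similarly derive the equation satisfied by $\fhi_2 = \partial_\zeta^2 \fhi_\zeta$ — it solves a linear parabolic problem of the form $\pt\fhi_2 = \pxx\fhi_2 - \fhi_2(\tilde\mu_t+\zeta\nu_t) + \partial_\fhi g(t,x,\fhi_\zeta)\fhi_2 - 2\fhi_1\nu + \partial^2_{\fhi\fhi} g(t,x,\fhi_\zeta)\fhi_1^2$ with zero initial datum — and then test the Euler-type relation: the $\fhi_2$-contribution to $j''$ is $\int\!\!\int \fhi_2\, d(\tilde\mu+\zeta\nu)\,dt$, which I would estimate by pairing $\fhi_2$ against the solution of the relevant dual/adjoint problem (or directly via Duhamel and the representation-formula machinery advertised in the introduction and collected in the Appendix). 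The main obstacle — and this is the technically heaviest part, exactly as the authors warn — is bounding this $\fhi_2$-term: it is quadratic in $\fhi_1$ but a priori only controlled with a constant that could blow up, so one must show it is dominated by the strictly negative definite terms $-\int\!\!\int(\px\fhi_1)^2 - \int\!\!\int\fhi_1^2\,d(\tilde\mu+\zeta\nu) - c_0\int\!\!\int\fhi_1^2$ harvested above, with room to spare. This is where smallness of $\delta$ (hence of $\varepsilon$, of $\|\mu_t\|$, and of $\|\fhi_\zeta - h\|$) is used quantitatively: one absorbs the $\fhi_2$-term and any remainder from the $\partial^2_{\fhi\fhi}g\,\fhi_1^2$ source using Young's inequality and Gronwall, choosing $\delta$ small enough that the net coefficient in front of $\int\!\!\int\fhi_1^2$ (and its gradient) is negative. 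Concluding: $j''(\zeta) \le -c\,\|\fhi_1\|^2 - \Psi''(\cdot)(\cdots)^2 < 0$ strictly, because if $\fhi_1 \equiv 0$ then from its equation $\nu\fhi_\zeta \equiv 0$, and since $\fhi_\zeta \ge h - \delta > 0$ this forces $\nu \equiv 0$, contradicting $\tilde\mu \ne \bar\mu$. Hence $j'' < 0$ on $]0,\varepsilon[$, as claimed. I would defer the precise estimate of the $\fhi_2$-term to the separate technical section~\S\ref{s:pkey} referenced in the introduction.
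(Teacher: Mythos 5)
Your overall strategy matches the paper's: you compute $j''(\zeta)$ and obtain the same three contributions (the $\fhi_1$-term, the $\fhi_2$-term and the $-\Psi''(\cdot)(\cdot)^2$ term), discard the last by convexity of $\Psi$, locate the sign-providing mechanism in $\p_\vfi g(\cdot,\cdot,h)\le -h_\ast h<0$ together with the smallness hypothesis~\eqref{eq:initass}, correctly observe that $\fhi_1\equiv 0$ would force $\nu\fhi_\ast\equiv 0$ and hence $\nu\equiv 0$, contradicting $\tilde\mu\neq\bar\mu$, and sensibly defer the heavy $\fhi_2$-estimate to a separate technical lemma. However, there is a genuine gap in your treatment of the $\fhi_1$-term.

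Testing the linearized equation~\eqref{eq:Euler} against $\fhi_1$ yields, exactly as you write, an identity for $\int_0^T\!\!\int_0^R \fhi_\ast\fhi_1\,d\nu_t\,dt$; but the quantity that actually enters $j''(\zeta)$ is $2\int_0^T\!\!\int_0^R \fhi_1\,d\nu_t\,dt$. To pass from one to the other you must control $\int_0^T\!\!\int_0^R(h-\fhi_\ast)\fhi_1\,d\nu_t\,dt$, and the best available bound on this remainder is \emph{linear}, not quadratic, in $\|\fhi_1\|_{L^2(]0,T[;H^1)}$: one can only use $\|h-\fhi_\ast\|_{L^\infty}\le C\delta$ and $\|\nu_t\|_{\mathcal M}=1$, obtaining a bound of order $\delta\,\|\fhi_1\|_{L^2(]0,T[;H^1)}$. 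A term like that cannot be absorbed into the coercive quadratic term $-c\|\fhi_1\|^2$ uniformly in $\fhi_1$: when $\|\fhi_1\|$ is small the linear term dominates, and since $\nu$ is a signed measure it has no fixed sign, so the conclusion $j''<0$ is lost. The device you are missing is to \emph{divide} the $\fhi_1$-equation through by $\fhi_\ast$ before pairing with $\fhi_1$ — equivalently, to test with $\fhi_1/\fhi_\ast$ instead of $\fhi_1$. This is legitimate because $\fhi_\ast\ge h/2>0$ by Lemma~\ref{l:fiast}, and it isolates $\nu_t$ on one side of the equation, so that $\int_0^T\!\!\int_0^R\fhi_1\,d\nu_t\,dt$ is expressed \emph{exactly} as a quadratic functional of $\fhi_1$ with no linear remainder; see the identity just before~\eqref{e:integrake} and the computation in Lemma~\ref{le:6.3}. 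The same substitution for $\nu_t$ is needed again inside the $\fhi_2$-analysis you defer: the Duhamel integrand there involves $\int_0^R D(t-s,\cdot,y)\fhi_1(s,y)\,d\nu_s(y)$, and the paper again replaces $\nu_s$ using the divided equation (Lemma~\ref{l:iditi}).
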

\begin{proof}
First, we point out that the map $j$ is continuous: this can be seen by using the stability estimate~\eqref{e:stability2}. Also, by arguing as in the proof of Theorem~\ref{th:Euler} we infer that $j$ is twice differentiable and that, 
for every $\zeta \in ]0, \varepsilon[$, we have 
\begin{equation}
    \begin{split}
    \label{e:jtwice}
             j''(\zeta) =          
            &
\int_0^T \! \! \int_0^R \vfi_2(t,x) d  \mu^\ast_t(x)dt+
2\int_0^T \! \!  \int_0^R \vfi_1(t,x) d\nu_t(x)dt\\
&-\Psi''\left(\int_0^T\int_0^R c(t,x)d\mu^\ast_t(x)dt\right)\left(\int_0^T \! \!  \int_0^R c(t,x)d\nu_t(x)dt\right)^2\\
\end{split}
    \end{equation}
provided that the measure $\mu^\ast$ is given by 
\begin{equation}
\label{e:cosaemuast}
    \mu^\ast : = \tilde \mu+ \zeta \nu 
\end{equation}
and the functions $\fhi_1$ and $\fhi_2$ are defined as follows. The function $\fhi_1$ 
is the solution of
\begin{equation}
\label{e:defhiuno}
\begin{cases}
\pt \fhi_1=\pxx \fhi_1-\fhi_1 \mu^\ast_t- \fhi_\ast \nu_t+ \partial_\fhi g ( \fhi_\ast) \fhi_1  &
 \text{in $]0, T[ \times ]0, R[$}\\
\px \fhi_1(t,0)=\px \fhi_1(t,R)=0 & t \in ]0, T[  \\
\fhi_1(0,x)=0 & x\in ]0,R[ \\
\end{cases}
\end{equation} and $\fhi_2$ is the weak solution of the initial-boundary value problem   
 \begin{equation}
\label{eq:Euler2}
\begin{cases}
\pt\fhi_2=\pxx \fhi_2
-\fhi_2 \mu^\ast_t - 2 \nu_t \fhi_1 + \partial_\fhi g ( \fhi_\ast) \fhi_2 + \partial^2_{\fhi \fhi} g (\fhi_\ast) \fhi_1^2 
 &
 \text{in $]0, T[ \times ]0, R[$}\\
\px \fhi_2(t,0)=\px \fhi_2(t,R)=0 & t \in ]0, T[  \\
\fhi_2(0,x)=0 & x\in ]0,R[. \\
\end{cases}
\end{equation}
In the above equation, $\fhi_\ast$ is the weak solution of
\begin{equation}
  \label{e:fhiast}
  \begin{cases}
    \pt\vfi_\ast = \pxx\vfi_\ast - \vfi_\ast \mu^\ast_t + g(t,x,\vfi_\ast),
    & \text{in $]0, T[ \times ]0, R[$},
    \\
      \px \fhi_\ast(t,0)=\px\vfi_\ast(t,R) = 0, & t \in \, ]0, T[,
     \\
    \vfi_\ast(0,x)=\vfi_0(x),
    & x\in{]0,R[}.
  \end{cases}
\end{equation}
By using the convexity of the function $\Psi$, we infer from~\eqref{e:jtwice} that 
\begin{equation*}
  j''(\zeta) \leq 
  \int_0^T\int_0^R \vfi_2(t,x) d  \mu_t^*(x)dt+
  2\int_0^T\int_0^R \vfi_1(t,x) d\nu_t(x)dt
\end{equation*}
and hence the proof of Lemma~\ref{l:concave} is an easy consequence of Lemma~\ref{l:uni1} below.
\end{proof}
\begin{lemma}
\label{l:uni1} Assume $\mathrm{{\bf (H.1)}}$-$\mathrm{{\bf (H.6)}}$.
Let $\mu^\ast$, $\fhi_1$ and $\fhi_2$ be the same as in~\eqref{e:cosaemuast},~\eqref{e:defhiuno} and~\eqref{eq:Euler2}, respectively.  Then  
 there is a sufficiently small constant $\delta$ such that, if~\eqref{eq:initass} holds, then 
\begin{equation}
\label{e:keypoint}
         2\int_0^T \! \! \int_0^R \vfi_1(t,x) d\nu_t(x)dt +
          \int_0^T \! \! \int_0^R \vfi_2(t,x) d\mu^\ast_t(x)dt< 0. 
\end{equation}
\end{lemma}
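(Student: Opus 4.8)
The plan is to show that the left-hand side of~\eqref{e:keypoint} equals $-\tfrac2h\,\mathcal E+(\text{error})$, where $\mathcal E>0$ is a coercive quadratic functional of $\fhi_1$ and the error is $\le C\delta\,\mathcal E$, so the whole quantity is strictly negative once $\delta$ is small. The starting observation is that {\bf (H.6)} makes the linearized operator dissipative near $h$: since $h$ is constant and $f(t,x,h)=0$, one has $g(t,x,h)=0$ and $\p_\fhi g(t,x,h)=h\,\p_\fhi f(t,x,h)\le-h\,h_\ast<0$, hence by~\eqref{e:alpha2} also $\p_\fhi g(t,x,\fhi_\ast)\le-\tfrac{h h_\ast}{2}<0$ as soon as $\fhi_\ast$ is $C\delta$-close to $h$. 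Accordingly the first step is to use the Duhamel representation formula for~\eqref{e:fhiast}, together with the heat-kernel bounds of the Appendix (in particular $\|e^{\tau\pxx}\sigma\|_{H^1(]0,R[)}\le K\tau^{-3/4}\|\sigma\|_{\mathcal M(]0,R[)}$), to prove that, under~\eqref{eq:initass}, $\|\fhi_\ast-h\|_{L^\infty(]0,T[;\,H^1(]0,R[))}\le C\delta$. This yields at once $\fhi_\ast\ge h/2$, $\|\px\fhi_\ast\|_{L^\infty(]0,T[;\,L^2)}\le C\delta$, $\|g(\cdot,\cdot,\fhi_\ast)\|_{L^\infty}\le C\delta$ and $\|\pt\fhi_\ast\|_{L^2(]0,T[;\,H^\ast)}\le C\delta$. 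Applying the same analysis to the backward adjoint problem
\[
-\pt q=\pxx q-q\,\mu^\ast_t+\p_\fhi g(t,x,\fhi_\ast)\,q+\mu^\ast_t,\qquad q(T,\cdot)=0,\qquad \px q(t,0)=\px q(t,R)=0,
\]
one gets $\|q\|_{L^\infty(]0,T[;\,H^1)}\le C\delta$.

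Next I would test~\eqref{e:defhiuno} with $\fhi_1$, use $\fhi_1(0,\cdot)=0$ and $\mu^\ast_t\ge0$, to obtain the identity $\mathcal E=-\int_0^T\!\int_0^R\fhi_\ast\fhi_1\,d\nu_t\,dt$, where $\mathcal E:=\tfrac12\|\fhi_1(T,\cdot)\|^2_{L^2}+\int_0^T\!\int_0^R(\px\fhi_1)^2+\int_0^T\!\int_0^R\fhi_1^2\,d\mu^\ast_t+\int_0^T\!\int_0^R(-\p_\fhi g(\fhi_\ast))\fhi_1^2$. Thus $\mathcal E\ge c(h,h_\ast)\,\|\fhi_1\|^2_{L^2(]0,T[;H^1)}$, and $\mathcal E>0$ because $\nu\neq0$ and $\fhi_\ast>0$ (if $\mathcal E=0$ then $\fhi_1\equiv0$, so $\fhi_\ast\nu_t\equiv0$, so $\nu\equiv0$). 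Writing $\fhi_1=\fhi_\ast\,\fhi_1/\fhi_\ast$ and $1/\fhi_\ast=1/h+(1/\fhi_\ast-1/h)$ gives $2\int\int\fhi_1\,d\nu_t=-\tfrac2h\mathcal E+\tfrac2h E$, with $E:=\int_0^T\!\int_0^R(h-\fhi_\ast)\fhi_1\,d\nu_t\,dt$. An analogous energy estimate for~\eqref{eq:Euler2} — whose source $-2\nu_t\fhi_1+\p^2_{\fhi\fhi}g(\fhi_\ast)\fhi_1^2$ has $L^2(]0,T[;H^\ast)$-norm controlled by $\|\fhi_1\|_{L^2(]0,T[;H^1)}$ — gives $\|\fhi_2\|_{L^2(]0,T[;H^1)}\le C\sqrt{\mathcal E}$, while testing~\eqref{eq:Euler2} with $q$ yields $\int\int\fhi_2\,d\mu^\ast_t=-2\int\int\fhi_1\,q\,d\nu_t+\int\int\p^2_{\fhi\fhi}g(\fhi_\ast)\fhi_1^2\,q$.

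The heart of the matter is the estimate $|E|+\big|\int\int\fhi_2\,d\mu^\ast_t\big|\le C\delta\,\mathcal E$. The term $\int\int\p^2_{\fhi\fhi}g(\fhi_\ast)\fhi_1^2\,q$ is harmless, being $\le K\|q\|_{L^\infty}\int\int\fhi_1^2\le C\delta\,\mathcal E$. For $E$ and for $-2\int\int\fhi_1\,q\,d\nu_t$ the decisive move is to eliminate the bare measure $\nu_t$ using~\eqref{e:defhiuno} itself: since $-\fhi_\ast\nu_t=(\pt-\pxx+\mu^\ast_t-\p_\fhi g(\fhi_\ast))\fhi_1$ and $h-\fhi_\ast=\fhi_\ast(\tfrac h{\fhi_\ast}-1)$, one rewrites, in the weak sense,
\[
E=-\int_0^T\!\!\int_0^R\Big(\tfrac h{\fhi_\ast}-1\Big)\fhi_1\,\big(\pt-\pxx+\mu^\ast_t-\p_\fhi g(\fhi_\ast)\big)\fhi_1,\qquad
-2\!\int\!\!\int\!\fhi_1\,q\,d\nu_t=2\int_0^T\!\!\int_0^R\tfrac q{\fhi_\ast}\,\fhi_1\,\big(\pt-\pxx+\mu^\ast_t-\p_\fhi g(\fhi_\ast)\big)\fhi_1 .
\]
Now I would integrate by parts in $x$ and in $t$ (rigorously, after regularizing as in {\sc Step 2} of~\S\ref{sss:uni}): every resulting term is $C\delta$ times one of the four summands of $\mathcal E$. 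Indeed the multipliers $\tfrac h{\fhi_\ast}-1$ and $\tfrac q{\fhi_\ast}$ both satisfy $\|\cdot\|_{L^\infty}+\|\px(\cdot)\|_{L^\infty(]0,T[;L^2)}\le C\delta$; their time derivatives, e.g.\ $\pt(\tfrac h{\fhi_\ast})=-\tfrac h{\fhi_\ast^2}(\pxx\fhi_\ast-\fhi_\ast\mu^\ast_t+g(\fhi_\ast))$, split into a $\pxx\fhi_\ast$-part which, after a further integration by parts in $x$, pairs $\px\fhi_\ast\in L^\infty(]0,T[;L^2)$ (size $\delta$) against $\fhi_1\,\px\fhi_1$ and contributes $C\delta\,\|\fhi_1\|_{L^2(]0,T[;H^1)}\|\px\fhi_1\|_{L^2}\le C\delta\,\mathcal E$, a $\mu^\ast_t$-part $\le\tfrac2h\int\int\fhi_1^2\,d\mu^\ast_t\le C\delta\,\mathcal E$, and a $g(\fhi_\ast)$-part $\le C\delta\int\int\fhi_1^2$; the contributions of $\pxx$, $\mu^\ast_t$ and $\p_\fhi g$ acting directly are likewise $C\delta$ times $\|\px\fhi_1\|^2_{L^2}$, $\int\int\fhi_1^2\,d\mu^\ast_t$ or $\int\int\fhi_1^2$. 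The key point is that only $\|\fhi_1\|^2_{L^2(]0,T[;H^1)}$, $\|\px\fhi_1\|^2_{L^2}$, $\int\int\fhi_1^2\,d\mu^\ast_t$ and $\|\fhi_1(T,\cdot)\|^2_{L^2}$ ever appear — never $\|\fhi_1\|^2_{L^\infty(]0,T[;H^1)}$ — and all of these are dominated by $\mathcal E$. Collecting everything, the left-hand side of~\eqref{e:keypoint} equals $-\tfrac2h\mathcal E+\tfrac2h E+\int\int\fhi_2\,d\mu^\ast_t\le-\big(\tfrac2h-C\delta\big)\mathcal E<0$ once $\delta$ is small in terms of $\alpha_1,\alpha_2,M,h,T,R,h_\ast$, which is~\eqref{e:keypoint}.

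I expect that last estimate to be the real obstacle. Crude energy or duality estimates only give $|E|,\big|\int\int\fhi_2\,d\mu^\ast_t\big|=O(\delta\sqrt{\mathcal E})$, which does not beat $\mathcal E$ when $\mathcal E$ is much smaller than $\delta^2$ — and $\mathcal E$ can be that small, e.g.\ when $\nu$ is proportional to a near-cancelling pair of close Dirac masses, in which case $\fhi_1$, hence $\mathcal E$, is of order the distance between the masses. Squeezing out the sharp factor $\delta\,\mathcal E$ is exactly what forces one to substitute the parabolic equations into the measure terms and to carry out the delicate integrations by parts above, and hence to control $\px\fhi_\ast,\pt\fhi_\ast,\px q,\pt q$ when $\mu^\ast$ is merely a Radon measure — which is precisely where the Duhamel representation formula and the fundamental-solution estimates of the Appendix are indispensable.
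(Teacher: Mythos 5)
Your argument is essentially correct and does reach the same conclusion, but it is organized quite differently from the paper's, so a comparison is worthwhile.

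\emph{Shared core.} Both proofs rest on the same two preliminary pillars, namely the Duhamel estimates
$\|\fhi_\ast-h\|_{L^\infty(]0,T[\times]0,R[)}\le C\delta$, $\fhi_\ast\ge h/2$ and
$\|\px\fhi_\ast\|_{L^\infty(]0,T[;L^2)}\le C\delta$ (these are exactly the paper's
Lemmas~\ref{l:fiast}--\ref{l:linftyelledue}), and on the crucial manoeuvre of
replacing $\fhi_\ast\nu_t$ by the parabolic operator applied to $\fhi_1$ inside the
``dangerous'' $\nu$-integrals, followed by an integration by parts.  The paper does
this twice: once in the proof of Lemma~\ref{le:6.3} (testing~\eqref{e:defhiuno} after
dividing by $\fhi_\ast$), and once in the proof of Lemma~\ref{l:iditi} (substituting
the expression for $\nu_t$ into the Duhamel integral defining $I(t)$).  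You perform
the same substitution.

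\emph{Where you diverge.}  (i) To obtain the negative leading term you test the
$\fhi_1$-equation with $\fhi_1$ itself, obtaining an unweighted coercive quantity
$\mathcal E$ and the exact identity $\mathcal E=-\int\!\!\int\fhi_\ast\fhi_1\,d\nu_t\,dt$,
and then you peel off the weight via the factorisation
$\tfrac1{\fhi_\ast}=\tfrac1h+(\tfrac1{\fhi_\ast}-\tfrac1h)$.  The paper instead tests
with $\fhi_1/\fhi_\ast$ and obtains a $\fhi_\ast$-weighted quadratic form directly,
so the splitting into a main term $\mathcal E$ plus a remainder $E$ is yours.
(ii) For the term $\int\!\!\int\fhi_2\,d\mu^\ast$ the paper proves the
$L^1_tL^\infty_x$-estimate of Lemma~\ref{l:intfhi2} through the Duhamel
representation of $\fhi_2$ combined with Lemma~\ref{l:iditi}; you instead pair the
$\fhi_2$-equation against a backward adjoint state $q$ (with right-hand side
$\mu^\ast_t$ and terminal datum $0$) and obtain the duality identity
$\int\!\!\int\fhi_2\,d\mu^\ast=-2\int\!\!\int q\fhi_1\,d\nu+\int\!\!\int q\,\p^2_{\fhi\fhi}g\,\fhi_1^2$,
after which the bare $\nu$-term is again treated by substituting the $\fhi_1$-equation.
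The adjoint route is arguably more transparent --- it makes it visible that what is really
being used is that the source $\mu^\ast_t$ is small and that $\fhi_1$ is
the ``derivative'' of $\fhi_\ast$ with respect to the control --- and it avoids the
long cascade $J_1,\dots,J_5$, $J_{11},\dots$ of Lemma~\ref{l:iditi}.  The price you
pay is that you must first establish well-posedness and the $L^\infty(]0,T[;H^1)$
bound of order $\delta$ for the adjoint problem, whose source is a measure; this is a
genuine extra step not covered by Theorem~\ref{th:main1} (which handles a
measure-valued \emph{coefficient} but an $L^2$ initial datum, not a measure
\emph{source}).  It can be done by the same Duhamel/kernel-bound technique used for
$\fhi_\ast$, $\fhi_1$, $\fhi_2$, but you should spell it out rather than only assert
it; as written, the existence and regularity of $q$ is the one real gap in your
proposal.

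Your heuristic discussion at the end --- that crude duality only gives
$O(\delta\sqrt{\mathcal E})$ and that the substitution of the equation into the bare
$\nu$-integrals is what produces the sharp $O(\delta\,\mathcal E)$ --- correctly
identifies the mechanism, and is precisely what the paper's Lemma~\ref{l:iditi} is
doing in its own language.  The individual estimates you sketch (the boundary term
$m(T)\fhi_1^2(T)$, the $\pxx\fhi_\ast$-piece after a further $x$-integration by
parts, the $\mu^\ast$-piece via
$\int\!\!\int\fhi_1^2\,d\mu^\ast_t\le C(R)\delta\,\|\fhi_1\|^2_{L^2(]0,T[;H^1)}$,
and so on) are all consistent with the paper's.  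You should, however, also note
explicitly that the whole computation has to be carried out first for smooth
$\mu^\ast$, $\nu$, $\fhi_0$ and then passed to the limit, as in \S\ref{ss:tuttorigoroso} of the paper; the pointer to {\sc Step 2} of \S\ref{sss:uni} is a different (and much simpler) rigorization.
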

The proof of Lemma~\ref{l:uni1} is rather long and technical and it is established in~\S~\ref{s:pkey}. 
\section{Proof of Lemma~\ref{l:uni1}}
\label{s:pkey}
To establish the proof of Lemma~\ref{l:uni1} we proceed as follows: in \S\ref{ss:fhiast}, \S\ref{ss:fhiuno} and \S\ref{ss:fhidue} we provide a formal argument which is completely justified only in the case when all the functions are sufficiently regular. In \S\ref{ss:tuttorigoroso} we first conclude this formal argument by (formally) establishing~\eqref{e:keypoint}  and then we explain how the formal argument can be made rigorous by relying on an approximation argument. To simplify notation, in the formal argument given in \S\ref{ss:fhiast}, \S\ref{ss:fhiuno} and \S\ref{ss:fhidue} we write $\sup_{t \in ]0, T[} \| \mu_t \|_{\mathcal M(]0, R[)}$
to denote $\mathrm{ess \; sup}_{t \in ]0, T[} \| \mu_t \|_{\mathcal M(]0, R[)}$. 
\subsection{Proof of Lemma~\ref{l:uni1}: estimates on $\fhi_\ast$}
\label{ss:fhiast}
We first control the distance of the function $\fhi_\ast$ from the constant $h$. 
\begin{lemma}
\label{l:fiast}
Let $\fhi_\ast$ be the weak solution of the initial-boundary 
value problem~\eqref{eq:2}, then \begin{equation}
\label{e:inftyast}
     \| \fhi_\ast (t, \cdot) - h \|_{L^\infty (]0, R[)} \leq 
     C(\alpha_1, M, T, R ) \left( \| \fhi_0 - h \|_{H^1 (]0, R[)}  + 
     \sup_{t \in ]0, T[} \| \mu_t \|_{\mathcal M (]0, R[)} \right) 
     \quad \forall\text{ $t \in ]0, T[$.}
\end{equation}
In particular, there is a threshold $\delta$, which only depends on $\alpha_1, M, T, R$ and $h$ such that, if~\eqref{eq:initass}  holds, then 
\begin{equation}
\label{e:grande}
    \fhi_\ast (t, x) \ge \frac{h}{2} >0 \quad 
    \text{for a.e. $(t, x) \in \, ]0, T[ \times ]0, R[$}. 
\end{equation}
\end{lemma}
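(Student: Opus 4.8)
The plan is to compare $\fhi_\ast$ with the constant $h$ by setting $w := \fhi_\ast - h$ and deriving the equation that $w$ solves. Since $h$ is constant, $\pxx h = 0$ and $\pt h = 0$, so from~\eqref{e:fhiast} we get $\pt w = \pxx w - (w+h)\mu^\ast_t + g(t,x,\fhi_\ast)$, which we rewrite using $g(t,x,\fhi) = f(t,x,\fhi)\fhi$ and the fact that $f(t,x,h)$ may be nonzero in general — but here the key structural point is that, by Taylor expansion of $g$ around $h$, we can write $g(t,x,\fhi_\ast) = g(t,x,h) + \partial_\fhi g(t,x,y) w$ for some intermediate $y$, and $\partial_\fhi g(t,x,h) = f(t,x,h) + h\,\partial_\fhi f(t,x,h)$. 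Using~\eqref{eq:2.3bis} we have $f(t,x,h)=0$, so the linear coefficient at $\fhi = h$ is exactly $h\,\partial_\fhi f(t,x,h) \le -h\,h_\ast < 0$ by~\eqref{e:accastar}; the zeroth-order term $g(t,x,h) = f(t,x,h)h = 0$ as well. Thus $w$ satisfies a parabolic equation whose reaction term has a strictly negative (dissipative) leading coefficient near $w=0$, with a forcing term coming from the $-(w+h)\mu^\ast_t$ contribution, which is controlled by $\sup_t\|\mu^\ast_t\|_{\mathcal M}$ and the uniform bound $0 \le \fhi_\ast \le M$ from~\eqref{eq:th2.1}.

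The core of the argument is then a Gronwall-type energy estimate: multiply the equation for $w$ by $w$, integrate over $]0,R[$, use the homogeneous Neumann conditions, and absorb the $-(w+h)\mu^\ast_t$ term using Lemma~\ref{l:measure} (specifically~\eqref{e:Fubini2}) together with the embedding~\eqref{e:immersione} of $H^1(]0,R[)$ into $C^0([0,R])$ and a Young inequality to split off $\tfrac12\|\px w\|_{L^2}^2$. The dissipativity $h\,\partial_\fhi f \le -h\,h_\ast$, valid for $\fhi_\ast$ close to $h$, yields a good sign on the reaction term so that, after Gronwall in time, $\|w(t,\cdot)\|_{L^2(]0,R[)}^2 + \int_0^t\|\px w\|_{L^2}^2\,ds$ is bounded by $C(\|w_0\|_{L^2}^2 + \sup_t\|\mu_t\|_{\mathcal M}^2)$. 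To upgrade from $L^2$ and $H^1$-in-time control to the pointwise-in-time $L^\infty_x$ bound~\eqref{e:inftyast}, I would differentiate the equation in $x$ (or equivalently test with $-\pxx w$), run a second energy estimate to control $\|\px w(t,\cdot)\|_{L^2}^2 + \int_0^t\|\pxx w\|_{L^2}^2\,ds$, which combined with the first estimate gives a bound on $\|w(t,\cdot)\|_{H^1(]0,R[)}$; then~\eqref{e:immersione} converts this into the desired $\|w(t,\cdot)\|_{C^0([0,R])}$ estimate in terms of $\|\fhi_0-h\|_{H^1}$ and $\sup_t\|\mu_t\|_{\mathcal M}$. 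Here the initial datum contributes $\|\px w_0\|_{L^2} = \|\px\fhi_0\|_{L^2}$, which is exactly why the hypothesis is phrased in the $H^1$ norm rather than merely $L^2$.

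Finally, the conclusion~\eqref{e:grande} is immediate: given the estimate~\eqref{e:inftyast}, choose $\delta$ small enough (depending only on $\alpha_1, M, T, R, h$, and the constant $C$ in~\eqref{e:inftyast}) so that the right-hand side of~\eqref{e:inftyast} is at most $h/2$ whenever~\eqref{eq:initass} holds; then $\fhi_\ast(t,x) \ge h - h/2 = h/2 > 0$. I expect the main obstacle to be the bookkeeping in the higher-order energy estimate: one must be careful that the Taylor-remainder coefficient $\partial_\fhi g(t,x,y) - h\,\partial_\fhi f(t,x,h)$ is genuinely small (Lipschitz in $y$ via $\alpha_2$ and the smallness of $w$ itself), so that the dissipative term survives — this is a mild bootstrap, since smallness of $\|w\|_{L^\infty}$ is both used and proved, and must be handled by first establishing the $L^2/H^1$ bound unconditionally (the sign of $h\,\partial_\fhi f$ at points where $\fhi_\ast$ is far from $h$ only helps, since $\partial_\fhi f < 0$ everywhere by~\eqref{eq:2.2}) and only then invoking smallness of $\delta$ to close the $C^0$ estimate. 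A secondary technical point is making the formal integrations by parts rigorous, which is handled exactly as in {\sc Step 2} of~\S\ref{sss:uni} via the cut-off test functions $w_n(s)z_j(x)$.
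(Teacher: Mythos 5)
Your proposal has a genuine gap at the step where you claim to ``differentiate the equation in $x$ (or equivalently test with $-\pxx w$)'' to obtain a pointwise-in-time $H^1$ bound on $w = \fhi_\ast - h$. That step does not survive the presence of the measure-valued coefficient $\mu^\ast_t$. Multiplying the equation by $-\pxx w$ produces the term
\[
\int_0^R (w+h)\,\pxx w \, d\mu^\ast_t(x),
\]
which requires continuity of $\pxx w$ to be meaningful; but for a weak solution of~\eqref{eq:2} the second spatial derivative is at best a distribution (formally $\pxx\fhi_\ast = \pt\fhi_\ast + \fhi_\ast\mu^\ast_t - g(t,x,\fhi_\ast)$, which contains the measure itself). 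If instead you regularize $\mu^\ast_t$ to a smooth $a_j$ and try to close the energy estimate by Cauchy--Schwarz, the bound involves $\|a_j(t,\cdot)\|_{L^2(]0,R[)}$, which in one space dimension is \emph{not} uniformly controlled by $\|\mu^\ast_t\|_{\mathcal M(]0,R[)}$ (think of approximating a Dirac mass). Differentiating the equation in $x$ is equally blocked, since it produces $\px\big[(w+h)\mu^\ast_t\big]$, the derivative of a measure. So the second-order energy estimate cannot be closed, and the $L^\infty_t H^1_x$ bound you need is out of reach by this route.

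The paper's proof sidesteps this entirely by using Duhamel's representation formula~\eqref{e:duha0},~\eqref{e:duha}: $\fhi_\ast - h$ is written as a convolution with the Neumann heat kernel $D(t,x,y)$, and the contribution of $\mu^\ast_t$ is estimated via the pointwise kernel bounds~\eqref{e:Dlinfty},~\eqref{e:Duno},~\eqref{e:Dxdue}. Because those bounds hold for each fixed $(t,x)$ and $y$, pairing them with a finite measure is always legitimate, and the time singularities $1/\sqrt{t-s}$ and $1/(t-s)^{3/4}$ are integrable. This gives the $L^\infty_x$ bound~\eqref{e:inftyast} directly, and, after differentiating the \emph{formula} (not the equation) in $x$, the companion bound~\eqref{e:linftyelledue}, by Gronwall. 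A secondary remark: the dissipativity $h\,\p_\vfi f(\cdot,\cdot,h) \le -h\,h_\ast < 0$ and the constant $\alpha_2$ play no role in this lemma and do not appear in the constant of~\eqref{e:inftyast}. Once $f(\cdot,\cdot,h)\equiv 0$, the one-sided bound $|f(t,x,\fhi_\ast)\fhi_\ast| \le \alpha_1 M |\fhi_\ast - h|$ from~\eqref{eq:2.2} and~\eqref{eq:th2.1} suffices, and Gronwall over the finite interval $[0,T]$ absorbs any bounded linear growth regardless of sign. The bootstrap you anticipate through smallness of $\|w\|_{L^\infty}$ is therefore unnecessary; smallness enters only at the very end, to pass from~\eqref{e:inftyast} to~\eqref{e:grande}.
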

\begin{proof}
Owing to the Duhamel Representation Formula (see the Appendix) we have 
\begin{equation}
\label{e:duha0}
\begin{split}
    \fhi_\ast (t, x)  = &
    \int_0^R D(t, x, y) \fhi_0(y)  dy -
    \int_0^t \! \! \int_0^R D(t-s, x, y) \fhi_\ast (s, y) d \mu^\ast_s (y) ds \\
    & +
    \int_0^t \! \! \int_0^R D(t-s, x, y) f \big (s, y, \fhi_\ast (s, y)\big)   
    \fhi_\ast(s, y) dy ds. \\
\end{split}
\end{equation}
We recall that $f(t, x, h) \equiv 0$, which implies that $\fhi \equiv h$ is a weak solution of the initial-boundary value problem~\eqref{eq:2} in the case when $\mu^\ast \equiv 0$. We deduce the following representation formula:
\begin{equation}
\label{e:duha}
\begin{split}
    \fhi_\ast (t, x) - h = &
    \int_0^R D(t, x, y) \big[ \fhi_0(y) -h \big] dy -
    \int_0^t \! \! \int_0^R D(t-s, x, y) \fhi_\ast (s, y) d \mu^\ast_s (y) ds \\
    & +
    \int_0^t \! \! \int_0^R D(t-s, x, y) f \big (s, y, \fhi_\ast (s, y)\big)   
    \fhi_\ast(s, y) dy ds. \\
\end{split}
\end{equation}
In the previous expressions, $D$ is the same kernel as in~\eqref{e:Di}.  
Since $f(\cdot, \cdot, h) \equiv 0$, then  
\begin{equation}
\label{e:effestar}
\begin{split}
    | f (t, x, \fhi_\ast )   
    \fhi_\ast   | & =
    | f  (t, x, \fhi_\ast )   
    \fhi_\ast - 
    f  (t, x, h )   
    \fhi_\ast     | \stackrel{\eqref{eq:2.2}}{\leq}
     \alpha_1 | \fhi_\ast || \fhi_\ast -h| \\
     & \stackrel{\eqref{eq:th2.1}}{\leq}  
     C(\alpha_1, M) | \fhi_\ast (t, x) -h| \quad
    \text{for every $(t, x) \in [0, T]\times [0, R]$}. 
    \end{split} 
\end{equation}
By plugging the above inequality into~\eqref{e:duha} we infer 
\begin{equation*}
  \begin{split}
    \| \fhi_\ast (t, \cdot) - h \|_{L^\infty (]0, R[)} &
    \stackrel{\eqref{eq:th2.1}}{\leq} \| D(t, x, \cdot)\|_{L^1 (]0,
      R[)} \| \fhi_0 -h \|_{L^\infty (]0, R[)}
    \\ 
    & \quad + M \sup_t
    \| \mu^\ast \|_{\mathcal M (]0, R[ )}
    \int_0^t \| D(t-s, x, \cdot) \|_{L^\infty (]0, R[)}ds 
    \\
    & \quad + \int_0^t \! \!  \| D(t-s, x, \cdot) \|_{L^1 (]0, R[)} \|
    f(s, \cdot, \fhi_\ast) \fhi_\ast (s, \cdot) \|_{L^\infty (]0, R[)}
    ds
    \\
    & \stackrel{\eqref{e:Duno},\eqref{e:Dlinfty}}{\leq} K \| \fhi_0 -h
    \|_{L^\infty (]0, R[)} + C(M, R) \sup_t \| \mu_t^\ast \|_{\mathcal M
      (]0, R[ )}
    \int_0^t \frac{1}{\sqrt{t-s}} ds 
    \\
    & \quad + K \int_0^t \! \!
    \| f(s, \cdot, \fhi_\ast) \fhi_\ast (s, \cdot) \|_{L^\infty (]0, R[)} ds
    \\
    & \stackrel{\eqref{e:immersione},\eqref{e:effestar}}{\leq} C(R) \|
    \fhi_0 -h \|_{H^1 (]0, R[)} +
    C(M, T, R) \sup_t \| \mu_t^\ast \|_{\mathcal M (]0, R[ )}  
    \\
    & \quad + C(\alpha_1, M) \int_0^t \! \!  \| \fhi_\ast(s, \cdot) -h
    \|_{L^\infty (]0, R[)} ds.
  \end{split}
\end{equation*}
Owing to the Gronwall Lemma, the above inequality 
implies~\eqref{e:inftyast}. 
\end{proof}
Next, we control the derivative $\px \fhi_\ast$ in $L^\infty (]0, T[, L^2 (]0, R[))$.
\begin{lemma}
  \label{l:linftyelledue}
  Under the same assumptions as in the statement of Lemma~\ref{l:fiast},
  we have 
\begin{equation}
\label{e:linftyelledue}
    \| \partial_x \fhi_\ast (t, \cdot) \|_{L^2 (]0, R[)}
    \leq C(\alpha_1, M, T, R) 
    \left[ 
     \| \fhi_0 - h \|_{H^1 (]0, R[)}+
     \sup_{t \in ]0, T[} 
     \| \mu_t^\ast \|_{\mathcal M (]0, R[)}  \right] \quad 
     \forall \text{ $t \in ]0, T[$.}
\end{equation}
\end{lemma}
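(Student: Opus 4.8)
The plan is to repeat the argument of Lemma~\ref{l:fiast}, based on the Duhamel Representation Formula, but after differentiating with respect to the space variable. Since hypothesis~(\textbf{H.6}) forces $h$ to be a constant we have $\partial_x \fhi_\ast = \partial_x (\fhi_\ast - h)$, so I would start from the representation~\eqref{e:duha} for $\fhi_\ast - h$ and differentiate it in $x$, obtaining
\[
\partial_x \fhi_\ast(t,x) = \int_0^R \partial_x D(t,x,y)\big[\fhi_0(y)-h\big]\,dy - \int_0^t\!\!\int_0^R \partial_x D(t-s,x,y)\,\fhi_\ast(s,y)\,d\mu^\ast_s(y)\,ds + \int_0^t\!\!\int_0^R \partial_x D(t-s,x,y)\,g_\ast(s,y)\,dy\,ds,
\]
where $g_\ast(s,y):=f\big(s,y,\fhi_\ast(s,y)\big)\fhi_\ast(s,y)$. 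The goal is then to bound the $L^2(]0,R[)$ norm in $x$ of each of the three terms uniformly in $t\in]0,T[$, and to deduce~\eqref{e:linftyelledue} by summing. As in the rest of~\S\ref{s:pkey}, this is a formal computation, to be justified by the approximation procedure of~\S\ref{ss:tuttorigoroso}.

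For the second term I would \emph{not} differentiate the datum: by Minkowski's integral inequality, the standard Gaussian bound $\| \partial_x D(\tau,\cdot,y) \|_{L^2(]0,R[)} \le C(R)\,\tau^{-3/4}$ (see the Appendix), the uniform bound $0\le\fhi_\ast\le M$ coming from~\eqref{eq:th2.1}, and Lemma~\ref{l:measure}, one gets a bound by $C(R)\,M\int_0^t (t-s)^{-3/4}\|\mu^\ast_s\|_{\mathcal M(]0,R[)}\,ds \le C(M,T,R)\,\sup_t\|\mu^\ast_t\|_{\mathcal M(]0,R[)}$, the time integral being finite because $(t-s)^{-3/4}$ is integrable on $]0,t[$. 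For the third term I would instead use Young's inequality for the heat-kernel convolution together with the bound $\| \partial_x D(\tau,\cdot,\cdot) \|_{L^1} \le C(R)\,\tau^{-1/2}$, the pointwise estimate~\eqref{e:effestar} giving $|g_\ast(s,y)| \le C(\alpha_1,M)\,|\fhi_\ast(s,y)-h|$, the Hölder inequality~\eqref{e:elledueelleuno}, and finally the already established bound~\eqref{e:inftyast} of Lemma~\ref{l:fiast}; since $\int_0^t (t-s)^{-1/2}\,ds$ is again finite, this produces a bound by $C(\alpha_1,M,T,R)\big(\|\fhi_0-h\|_{H^1(]0,R[)}+\sup_t\|\mu^\ast_t\|_{\mathcal M(]0,R[)}\big)$.

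The main obstacle is the first term, which cannot be handled by brute force: the norm $\| \partial_x D(t,\cdot,y) \|$ is of order $t^{-3/4}$ in $L^2$ and $t^{-1/2}$ in $L^1$, and here — unlike in the other two terms — there is no time integration available to absorb the singularity as $t\to 0^+$. The remedy is to integrate by parts, moving the $x$-derivative onto the initial datum, which requires exploiting the structure of the Neumann heat kernel. Concretely, the reflection method gives $\int_0^R D(t,x,y)u(y)\,dy = \Gamma(t,\cdot)\ast \tilde u$ on $]0,R[$, where $\Gamma$ denotes the one-dimensional Gaussian and $\tilde u$ the even, $2R$-periodic extension of $u$; when $u=\fhi_0-h\in H^1(]0,R[)$ its extension lies in $H^1_{\mathrm{loc}}(\R)$ and is $2R$-periodic, so $\partial_x\big(\Gamma(t,\cdot)\ast\tilde u\big) = \Gamma(t,\cdot)\ast\partial_x\tilde u$, whence by Young's inequality and $\|\Gamma(t,\cdot)\|_{L^1(\R)}=1$ the first term is bounded by $C(R)\,\|\partial_x\fhi_0\|_{L^2(]0,R[)} \le C(R)\,\|\fhi_0-h\|_{H^1(]0,R[)}$, uniformly in $t\in]0,T[$. (Equivalently, one may observe that $\partial_x$ intertwines the Neumann heat semigroup on $]0,R[$ with a contraction semigroup on $L^2(]0,R[)$.) Collecting the three estimates then gives~\eqref{e:linftyelledue}.
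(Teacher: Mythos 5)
Your proposal is correct and follows essentially the same strategy as the paper's proof: differentiate the Duhamel representation, split into three terms, handle the initial-datum term by passing the $x$-derivative through the semigroup (the paper encodes this as the identity~\eqref{e:dualita} together with the bounds~\eqref{e:tDuno}--\eqref{e:tDunox}, which is equivalent to your periodic even-extension argument), and control the two source terms using the Appendix kernel estimates together with~\eqref{eq:th2.1} and~\eqref{e:inftyast}. The only cosmetic difference is in the last term, where you invoke Young's convolution inequality with an $L^1$ bound on $\partial_x D$ in place of the paper's Minkowski-plus-$\|\partial_x D\|_{L^2}$ argument via~\eqref{e:Dxduex}; both variants absorb the kernel singularity through the available time integration, so the conclusion is the same.
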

\begin{proof}
By differentiating the representation formula~\eqref{e:duha0} with respect to $x$ and using~\eqref{e:dualita} we get 
\begin{equation}
  \label{e:duha1}
  \begin{split}
    \partial_x \fhi_\ast (t, x) = & \underbrace{\int_0^R \tilde D(t,
      x, y) \fhi'_0(y) dy}_{I_1(t, x)} - \underbrace{\int_0^t \! \!
      \int_0^R \partial_x D(t-s, x, y) \fhi_\ast (s, y) d \mu^\ast_s
      (y) ds}_{I_2 (t, x)}
    \\
    & + \underbrace{\int_0^t \! \! \int_0^R \partial_x D(t-s, x, y) f \big
      (s, y, \fhi_\ast \big)
      \fhi_\ast(s, y) dy ds.}_{I_3(t, x)} 
  \end{split}
\end{equation}
We control the first term by arguing as follows: 
\begin{equation}
\label{e:izero}
\begin{split}
  \| I_1 (t, \cdot) \|_{L^2 (]0, R[)} & = 
  \left[ \int_0^R  \left(
  \int_0^R \tilde D(t, x, y) \fhi'_0(y)  dy \right)^2 dx \right]^{1/2}
  \\
  & \stackrel{\text{H{\"o}lder}}{\leq} 
  \left[
  \int_0^R \left(  \int_0^R 
  \tilde D(t, x, y) (\fhi'_0)^2 (y)  dy  
  \right) 
  \left(
  \int_0^R
  \tilde D(t, x, y)   dy
  \right) dx \right]^{1/2} \\ &
  \stackrel{\eqref{e:tDuno}}{\leq} 
  K 
  \left[
  \int_0^R   \int_0^R 
  \tilde D(t, x, y) (\fhi'_0)^2 (y)  dy  dx 
  \right]^{1/2} \\
  &  \leq 
  K 
  \left(
  \int_0^R (\fhi'_0)^2 (y)   \int_0^R 
  \tilde D(t, x, y)   dx \, dy 
  \right)^{1/2}
  \\ & 
  \stackrel{\eqref{e:tDunox}}{\leq} K   \left[
  \int_0^R (\fhi'_0)^2 (y)  
  \right]^{1/2} 
   \leq K \| \fhi_0 -h \|_{H^1 (]0, R[).} \phantom{\int} \\
\end{split}
\end{equation}
To establish the last inequality, we have used the fact that $h$ is a constant. 
Owing to the Bochner Theorem~\cite[p.473]{Salsa}, we have 
\begin{equation}
\label{e:iuno}
\begin{split}
    \| I_2 (t, \cdot) \|_{L^2 (]0, R[)} & 
    \leq \int_0^t \left\|  
     \int_0^R \partial_x 
    D(t-s, \cdot, y) \fhi_\ast (s, y) d \mu^\ast_s (y)  \right\|_{L^2(]0, R[)} ds \\
    & 
     \stackrel{\eqref{eq:th2.1}}{\leq} 
    M 
    \int_0^t 
    \left[
    \int_0^R \left(
    \int_0^R  |\partial_x D|
    (t-s, x, y) d \mu^\ast_s (y) \right)^2 dx \right]^{1/2}   ds 
    \\
    & \stackrel{\text{H{\"o}lder}}{\leq}  
    M \int_0^t
    \left[
    \int_0^R
    \left(
    \int_0^R 
    (\partial_x 
    D)^2
     (t-s, x, y)  d 
     \mu^\ast_s (y) \right)
     \| \mu^\ast_s \|_{\mathcal M (]0, R[)} dx \right]^{1/2}   ds  \\ 
    & \leq 
    M 
    \left( \sup_{s \in ]0, T[}   \| \mu^\ast_s \|_{\mathcal M (]0, R[)} 
    \right)^{1/2}
    \int_0^t
    \left(  
    \int_0^R
    \int_0^R 
    (\partial_x 
    D)^2
     (t-s, x, y) dx   d 
     \mu^\ast_s (y) 
      \right)^{1/2}   ds  \\ 
     & 
     \stackrel{\eqref{e:Dxduex}}{\leq} 
     C(M) \left( \sup_{s \in ]0, T[}
      \| \mu^\ast_s \|_{\mathcal M (]0, R[)}  \right)^{1/2}
     \int_0^t \left[ \int_0^R \frac{1}{(t-s)^{3/2}} d \mu^\ast_s(y)  \right]^{1/2} ds \\ &
     \leq C(M) \sup_{s \in ]0, T[}
      \| \mu^\ast_s \|_{\mathcal M (]0, R[)}  
     \int_0^t  \frac{1}{(t-s)^{3/4}}  ds \\ &
     \leq  C(M, T) 
    \sup_{s \in ]0, T[}  \| \mu^\ast_s \|_{\mathcal M (]0, R[)}.   \phantom{\int}
\end{split}
\end{equation}
By using again the Bochner Theorem~\cite[p.473]{Salsa} and arguing as before 
we get  
\begin{equation}
  \label{e:idueelleue}
  \begin{split}
    \| I_3 (t, \cdot) \|_{L^2 (]0, R[)} & \leq \int_0^t \left\|
      \int_0^R \partial_x D(t-s, \cdot, y) f \big (s, y, \fhi_\ast
      \big)
      \fhi_\ast(s, y) dy \right\|_{L^2(]0, R[)} ds
    \\
    & \stackrel{\eqref{e:maxprin},\eqref{e:effestar}}{\leq} C(M,
    \alpha_1) \int_0^t \left\| \int_0^R | \partial_x D(t-s, \cdot, y)|
      | \fhi_\ast -h |(s, y) dy \right\|_{L^2(]0, R[)} ds
    \\
    & \leq
    C(M, \alpha_1) \int_0^t \int_0^R \| \partial_x D (t-s, \cdot, y)
    \|_{L^2 (]0, R[)} | \fhi_\ast -h |(s, y) dy ds
    \\
    & \stackrel{\eqref{e:Dxduex}}{\leq} C(\alpha_1, M) 
    \int_0^t \| \fhi_\ast(s,
    \cdot) -h \|_{L^\infty (]0, R[)}\frac{1}{(t-s)^{3/4}}
    ds
    \\
    & \stackrel{\eqref{e:inftyast}}{\leq} C(\alpha_1, M, T, R) \Big[
    \sup_{t \in ]0, T[} \| \mu_t^\ast \|_{\mathcal M (]0, R[)} + \|
    \fhi_0 - h \|_{H^1(]0, R[)} \Big] \int_0^t \frac{1}{(t-s)^{3/4}}
    ds
    \\
    & \leq C(\alpha_1, M, T, R) \Big[ \sup_{t \in ]0, T[} \|
    \mu_t^\ast \|_{\mathcal M (]0, R[)} + \| \fhi_0 - h \|_{H^1 (]0,
      R[)} \Big].
  \end{split}
\end{equation}
By plugging~\eqref{e:izero},~\eqref{e:iuno} and~\eqref{e:idueelleue} into~\eqref{e:duha1} we establish~\eqref{e:linftyelledue}. 
\end{proof}
\subsection{Proof of Lemma~\ref{l:uni1}: estimates on $\fhi_1$}
\label{ss:fhiuno}
We can now control the first term in~\eqref{e:keypoint}. 
\begin{lemma}
\label{le:6.3}
  Under the same assumptions as in the statement of Lemma~\ref{l:fiast}, we have 
\begin{equation}
\label{e:integral}
          \int_0^T  \! \! \int_0^R \vfi_1(t,x) d\nu_t(x)dt 
\leq  \| \fhi_1 \|^2_{L^2 (  ]0, T[; H^1 (]0, R[))} 
\Big[ 
- C(M, h_\ast) +  C(\alpha_1, \alpha_2, M, h, T, R)  \delta
\Big].
\end{equation}
\end{lemma}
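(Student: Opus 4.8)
The plan is to run a weighted energy estimate for $\fhi_1$, testing the equation~\eqref{e:defhiuno} against $v:=\fhi_1/\fhi_\ast$. This is admissible because Lemma~\ref{l:fiast} gives $\fhi_\ast\ge h/2>0$ (once $\delta$ is small), so $v\in L^2(]0,T[;H^1(]0,R[))$; and the point of this particular weight is that the source term $-\fhi_\ast\nu_t$ in~\eqref{e:defhiuno} pairs with $v$ to produce \emph{exactly} $-\int_0^T\!\!\int_0^R\fhi_1\,d\nu_t\,dt$. Multiplying~\eqref{e:defhiuno} by $v$, integrating over $]0,T[\times]0,R[$ and rearranging, I get
\[
\int_0^T\!\!\int_0^R\fhi_1\,d\nu_t\,dt=(A)+(B)+(C)+(D),\qquad
(A)=\int\!\!\int\pxx\fhi_1\,\tfrac{\fhi_1}{\fhi_\ast},\;\;
(B)=-\int\!\!\int\pt\fhi_1\,\tfrac{\fhi_1}{\fhi_\ast},\;\;
(C)=-\int\!\!\int\tfrac{\fhi_1^2}{\fhi_\ast}\,d\mu^\ast_t,\;\;
(D)=\int\!\!\int\partial_\fhi g(\fhi_\ast)\tfrac{\fhi_1^2}{\fhi_\ast}.
\]
I would carry out all these manipulations formally, deferring to~\S\ref{ss:tuttorigoroso} (via the approximation scheme already used in~\S\ref{sss:uni} and in the proof of Theorem~\ref{th:main1}, i.e. replacing $\mu^\ast$ by smooth $a_j(t,\cdot)$ and regularizing in time) the rigorous justification, which is needed precisely because $\fhi_\ast$ is not classically differentiable.

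The coercive terms are $(A)$ and $(D)$. For $(A)$, an integration by parts in $x$ (the Neumann conditions kill the boundary terms) gives $(A)=-\int\!\!\int\tfrac{(\px\fhi_1)^2}{\fhi_\ast}+\int\!\!\int\tfrac{\fhi_1\,\px\fhi_1\,\px\fhi_\ast}{\fhi_\ast^2}$; bounding the first term below using $\fhi_\ast\le M$, and the second by H\"older,~\eqref{e:immersione} and Lemma~\ref{l:linftyelledue} (which yields $\sup_t\|\px\fhi_\ast(t,\cdot)\|_{L^2}\le C(\alpha_1,M,T,R)\delta$), one gets $(A)\le-\tfrac1M\|\px\fhi_1\|^2_{L^2(]0,T[\times]0,R[)}+C(\alpha_1,M,h,T,R)\,\delta\,\|\fhi_1\|^2_{L^2(]0,T[;H^1)}$. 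For $(D)$, using~\eqref{e:cosaeg} I split $\partial_\fhi g(t,x,\fhi_\ast)=\partial_\fhi f(t,x,h)\fhi_\ast+\rho(t,x)$, where $\rho=f(t,x,\fhi_\ast)+\bigl[\partial_\fhi f(t,x,\fhi_\ast)-\partial_\fhi f(t,x,h)\bigr]\fhi_\ast$; since $f(\cdot,\cdot,h)\equiv0$, conditions~\eqref{eq:2.2} and~\eqref{e:alpha2} give $|\rho|\le(\alpha_1+\alpha_2 M)\|\fhi_\ast-h\|_{L^\infty}\le C(\alpha_1,\alpha_2,M,T,R)\delta$ by~\eqref{e:inftyast}. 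Because $\partial_\fhi f(t,x,h)\le-h_\ast$ by~\eqref{e:accastar}, the main part contributes $\int\!\!\int\partial_\fhi f(t,x,h)\fhi_1^2\le-h_\ast\|\fhi_1\|^2_{L^2(]0,T[\times]0,R[)}$, while the remainder $\int\!\!\int\rho\,\fhi_1^2/\fhi_\ast$ is absorbed into the $\delta$-term using $1/\fhi_\ast\le 2/h$. Hence $(D)\le-h_\ast\|\fhi_1\|^2_{L^2(]0,T[\times]0,R[)}+C(\alpha_1,\alpha_2,M,h,T,R)\,\delta\,\|\fhi_1\|^2_{L^2(]0,T[;H^1)}$.

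The remaining two terms are harmless. Since $\mu^\ast_t\ge0$ and $\fhi_\ast>0$, $(C)\le 0$. For $(B)$, write $-\pt\fhi_1\cdot\tfrac{\fhi_1}{\fhi_\ast}=-\tfrac12\pt\!\bigl(\tfrac{\fhi_1^2}{\fhi_\ast}\bigr)-\tfrac12\tfrac{\fhi_1^2\,\pt\fhi_\ast}{\fhi_\ast^2}$; using $\fhi_1(0,\cdot)=0$ this gives $(B)=-\tfrac12\int_0^R\tfrac{\fhi_1^2(T,x)}{\fhi_\ast(T,x)}dx-\tfrac12\int_0^T\langle\pt\fhi_\ast(t,\cdot),\tfrac{\fhi_1^2}{\fhi_\ast^2}(t,\cdot)\rangle\,dt$, the first term being $\le 0$. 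For the second, the equation~\eqref{e:fhiast} yields $\|\pt\fhi_\ast(t,\cdot)\|_{H^\ast}\le\|\px\fhi_\ast(t,\cdot)\|_{L^2}+C(R)M\|\mu^\ast_t\|_{\mathcal M}+\|g(\fhi_\ast)\|_{L^2}\le C(\alpha_1,M,T,R)\delta$ by Lemmas~\ref{l:fiast}--\ref{l:linftyelledue} (here $|g(\fhi_\ast)|\le\alpha_1 M\|\fhi_\ast-h\|_{L^\infty}$), while $\|\tfrac{\fhi_1^2}{\fhi_\ast^2}(t,\cdot)\|_{H^1}\le C(h,R)\|\fhi_1(t,\cdot)\|^2_{H^1}$ (using $\fhi_\ast\ge h/2$ and boundedness of $\|\px\fhi_\ast\|_{L^2}$). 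Thus $|(B)|\le C(\alpha_1,M,h,T,R)\,\delta\,\|\fhi_1\|^2_{L^2(]0,T[;H^1)}$. Summing the four estimates and absorbing $-\tfrac1M\|\px\fhi_1\|^2-h_\ast\|\fhi_1\|^2_{L^2}\le-\min\{1/M,h_\ast\}\,\|\fhi_1\|^2_{L^2(]0,T[;H^1)}$ gives~\eqref{e:integral} with $C(M,h_\ast):=\min\{1/M,h_\ast\}$.

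The main obstacle is twofold. First, the rigorous justification: $\fhi_\ast$ only satisfies $\fhi_\ast\in L^\infty\cap L^2(]0,T[;H^1)$ with $\pt\fhi_\ast\in L^\infty(]0,T[;H^\ast)$, so $\pxx\fhi_\ast$ is merely a distribution and the manipulations with $\fhi_1/\fhi_\ast$ (the time-derivative identity in $(B)$, the pairing $\langle\pt\fhi_\ast,\fhi_1^2/\fhi_\ast^2\rangle$) must be performed on the regularized problems and passed to the limit, exactly as in {\sc Step 2} of~\S\ref{sss:uni}. Second, and more substantively, one must check that \emph{every} error term genuinely carries a factor $\delta$: this rests decisively on the sharp Duhamel-based a-priori bounds $\|\fhi_\ast-h\|_{L^\infty}\le C\delta$ and $\|\px\fhi_\ast\|_{L^\infty(]0,T[;L^2)}\le C\delta$ of Lemmas~\ref{l:fiast} and~\ref{l:linftyelledue}, which are themselves where~\eqref{eq:initass} enters. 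The favourable sign of $(A)$, $(C)$, $(D)$ and of the $t=T$ contribution to $(B)$ — i.e. the fact that, up to $O(\delta)$, the weighted energy identity is coercive — is precisely the mechanism producing the strictly negative leading coefficient $-C(M,h_\ast)$, hence ultimately the local concavity of $j$.
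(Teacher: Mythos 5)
Your argument is correct, and the starting identity (pair the $\fhi_1$-equation with $\fhi_1/\fhi_\ast$, which is the same as the paper's device of dividing the equation by $\fhi_\ast$ and pairing the resulting expression for $\nu_t$ with $\fhi_1$) coincides with the paper's. The genuinely different step is your treatment of the time-derivative piece $(B)$. After the identity
$(B)=-\tfrac12\int_0^R\tfrac{\fhi_1^2(T)}{\fhi_\ast(T)}dx-\tfrac12\int_0^T\langle\pt\fhi_\ast,\fhi_1^2/\fhi_\ast^2\rangle dt$,
the paper substitutes $\pt\fhi_\ast$ from~\eqref{e:fhiast}, integrates the resulting $\pxx\fhi_\ast$ by parts once more in $x$, collects the resulting $\mu^\ast$- and $f$-contributions with the ones already present (producing the factors $-\tfrac12$ and $+\tfrac12$), and then uses a Young inequality with parameter $a=2$ to absorb the cross term $2\int\!\!\int\px\fhi_\ast\px\fhi_1\fhi_1/\fhi_\ast^2$ into the coercive gradient term. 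You instead estimate the pairing directly by $H^\ast$--$H^1$ duality, observing that the same equation~\eqref{e:fhiast}, together with Lemmas~\ref{l:fiast}--\ref{l:linftyelledue} and the constraint $\|\mu^\ast_t\|_{\mathcal M}\le\delta$, gives the pointwise-in-time bound $\|\pt\fhi_\ast(t,\cdot)\|_{H^\ast}\le C\delta$, while $\|\fhi_1^2/\fhi_\ast^2(t,\cdot)\|_{H^1}\le C(\alpha_1,M,h,T,R)\|\fhi_1(t,\cdot)\|_{H^1}^2$. The same ingredients are used, but your route is shorter and bypasses the chain of integrations by parts and the Young absorption step; what you lose is the explicit display of the sign-favourable $-\tfrac12\mu^\ast$ cancellation, which is harmless because that term is $O(\delta)$ in any case. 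Your decomposition $\partial_\fhi g(\fhi_\ast)=\partial_\fhi f(\cdot,\cdot,h)\fhi_\ast+\rho$ for $(D)$ is likewise a cosmetic repackaging of the paper's split of $\partial_\fhi g=f+\partial_\fhi f\cdot\fhi_\ast$ followed by $\partial_\fhi f(\fhi_\ast)=\partial_\fhi f(h)+[\partial_\fhi f(\fhi_\ast)-\partial_\fhi f(h)]$, and the estimate for $(A)$ agrees with the paper's after its Young step. Your identification of the two real obstacles (the regularity bootstrap deferred to the smoothing scheme of~\S\ref{ss:tuttorigoroso}, and the fact that every error term must genuinely carry a factor of $\delta$ via the Duhamel lemmas) is also the one the paper makes.
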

\begin{proof}
We argue according to the following steps. \\
\firststep
\step{we find a more convenient expression for the left hand side
  of~\eqref{e:integral}}
Owing to~\eqref{e:grande}, the function $\fhi_\ast$ is bounded away from $0$. We recall that by definition $g(\cdot, \cdot, \fhi) = f(\cdot, \cdot, \fhi)\fhi$, 
which implies that 
$$
    \partial_\fhi g (\cdot, \cdot, \fhi_\ast) =
    f(\cdot, \cdot, \fhi_\ast) + \partial_\fhi f(\cdot, \cdot, \fhi_\ast )\fhi_\ast.  
$$
We divide  the equation at the first line of~\eqref{eq:Euler} times $\fhi_\ast$ and we use  the above expression for $\partial_\fhi g$. We eventually get
\begin{equation*}
\label{eq:4.2.3}
\begin{split}
\nu_t=&-\frac{\pt\vfi_1}{\fhi_\ast}+\frac{\pxx\vfi_1}{\fhi_\ast}-\frac{\vfi_1}{\fhi_\ast}\mu^\ast_t+
f(\cdot,\cdot,\fhi_\ast)\frac{\vfi_1}{\fhi_\ast}+\p_\vfi f(t,x,\fhi_\ast)\vfi_1. \\
\end{split}
\end{equation*} 
By using the above expression for $\nu_t$, we can formally rewrite
the left hand side of~\eqref{e:integral} as  
\begin{equation}
  \label{e:integrake}
  \begin{split}
    \int_0^T \! \int_0^R\! \vfi_1(t,x) d\nu_t(x)dt = & - \int_0^T \!
    \int_0^R \!\frac{\vfi_1 }{\fhi_\ast} \pt\vfi_1 dx dt + \int_0^T \!
    \int_0^R \!\frac{\vfi_1 }{\fhi_\ast} \pxx\vfi_1 dx dt - \int_0^T \!
    \int_0^R \!
    \frac{\vfi_1^2}{\fhi_\ast} d \mu^\ast (x ) dt
    \\
    & + \int_0^T \!  \int_0^R \left(
      f(t,x,\fhi_\ast)\frac{\vfi_1^2}{\fhi_\ast} +\p_\vfi
      f(t,x,\fhi_\ast)\vfi_1^2
    \right) dx dt.
  \end{split}
\end{equation}
\step{we separately control each of the terms in~\eqref{e:integrake}}
Owing to Cauchy condition $\fhi_1 (0, \cdot) \equiv 0$ in~\eqref{eq:Euler} and to the inequality~\eqref{e:grande} we have 
\begin{equation}
\label{e:inte1}
\begin{split}
     - \int_0^T \!
\int_0^R   \frac{\vfi_1}{\fhi_\ast}\pt\vfi_1 \, dx dt = & 
- \frac{1}{2} \int_0^T \!
\int_0^R    \frac{\pt \fhi_1^2}{\fhi_\ast} \, dx dt =
- \frac{1}{2} \int_0^R  \frac{\fhi_1^2}{\fhi_\ast} (T, x) dx -
 \frac{1}{2} \int_0^T \!
\int_0^R  \frac{\fhi_1^2 }{\fhi_\ast^2} \, \pt \fhi_\ast \, dx dt \\
& \stackrel{\eqref{e:grande}}{\leq} - \frac{1}{2} \int_0^T \!
\int_0^R  \frac{\fhi_1^2 }{\fhi_\ast^2} \, \pt \fhi_\ast \, dx dt  \\
& \stackrel{\eqref{e:fhiast}}{=}
- \frac{1}{2} \int_0^T \!
\int_0^R  \frac{\fhi_1^2 }{\fhi_\ast^2}
\, \partial_{xx} \fhi_\ast \, dx dt +
\frac{1}{2} \int_0^T \!
\int_0^R  \frac{\fhi_1^2 }{\fhi_\ast}
 d \mu^\ast (x) dt \\ & \quad -
 \frac{1}{2}
  \int_0^T \!
\int_0^R  \frac{\fhi_1^2 }{\fhi_\ast}
\, f(t, x, \fhi_\ast)  \, dx dt.
\end{split}
\end{equation}
By using the fact that $\fhi_\ast$ satisfies homogeneous Neumann boundary conditions~\eqref{eq:2} we obtain
\begin{equation}
\label{e:inte3}
\begin{split}
     - \frac{1}{2} \int_0^T \!
\int_0^R  \frac{\vfi_1^2}{\fhi_\ast^2}\pxx\fhi_\ast \, dx dt = & 
 \int_0^T \!
\int_0^R   \px \fhi_\ast \, \px \fhi_1  \frac{\fhi_1}{\fhi^2_\ast} \, dx dt 
-  \int_0^T \! \int_0^R ( \px \fhi_\ast)^2  \frac{\fhi_1^2}{\fhi^3_\ast}  \, dx dt . \\ 
\end{split}
\end{equation}
By using the fact that $\fhi_1$ satisfies homogeneous Neumann boundary conditions~\eqref{eq:Euler} we get  \begin{equation}
\label{e:inte2}
\begin{split}
      \int_0^T \!
\int_0^R   \frac{\vfi_1}{\fhi_\ast}\pxx \vfi_1 \, dx dt = & 
-  \int_0^T \!
\int_0^R    \frac{( \px \fhi_1)^2}{\fhi_\ast} \, dx dt +
\int_0^T \!
\int_0^R    \frac{\fhi_1}{\fhi^2_\ast} \, \px \fhi_1 \, \px \fhi_\ast \, dx dt.   
\end{split}
\end{equation}
By  plugging~\eqref{e:inte1},~\eqref{e:inte2} and~\eqref{e:inte3} into~\eqref{e:integrake} we arrive at 
\begin{equation}
  \label{e:integrake2}
  \begin{split}
    \int_0^T \! \int_0^R \vfi_1(t,x) d\nu_t(x)dt &
    \stackrel{\eqref{e:integrake},~\eqref{e:inte1}}{\leq} -
    \frac{1}{2} \int_0^T \!  \int_0^R \frac{\fhi_1^2 }{\fhi_\ast^2}
    \, \partial_{xx} \fhi_\ast \, dx dt + \frac{1}{2} \int_0^T \!
    \int_0^R \frac{\fhi_1^2 }{\fhi_\ast} d \mu^\ast (x) dt 
    \\
    & \quad
    - \frac{1}{2} \int_0^T \!  \int_0^R \frac{\fhi_1^2 }{\fhi_\ast} \,
    f(t, x, \fhi_\ast) \, dx dt + \int_0^T \!
    \int_0^R \frac{\vfi_1}{\fhi_\ast} \pxx\vfi_1 dx dt 
    \\
    & - \int_0^T \!  \int_0^R
    \frac{\vfi_1^2}{\fhi_\ast} d \mu^\ast (x ) dt
    \\
    & + \int_0^T \!  \int_0^R \left(
      f(t,x,\fhi_\ast)\frac{\vfi_1^2}{\fhi_\ast} +\p_\vfi
      f(t,x,\fhi_\ast)\vfi_1^2
    \right) dx dt 
    \\
    & \stackrel{\eqref{e:inte3}}{\leq} \int_0^T \!  \int_0^R \px
    \fhi_\ast \, \px \fhi_1 \frac{\fhi_1}{\fhi^2_\ast} \, dx dt -
    \int_0^T \! \int_0^R ( \px \fhi_\ast)^2
    \frac{\fhi_1^2}{\fhi^3_\ast} \, dx dt 
    \\
    & - \frac{1}{2} \int_0^T
    \!  \int_0^R \frac{\fhi_1^2 }{\fhi_\ast} d \mu^\ast (x) dt 
    \\
    &
    \quad + \frac{1}{2} \int_0^T \!  \int_0^R \frac{\fhi_1^2
    }{\fhi_\ast} \, f(t, x, \fhi_\ast) \, dx dt + \int_0^T \!
    \int_0^R \frac{\vfi_1 }{\fhi_\ast} \pxx\vfi_1 dx dt  
    \\
    & + \int_0^T \!  \int_0^R \p_\vfi f(t,x,\fhi_\ast)\vfi_1^2
    dx dt
    \\
    & \stackrel{\eqref{e:inte2}}{\leq} 2 \int_0^T \!  \int_0^R \px
    \fhi_\ast \, \px \fhi_1 \frac{\fhi_1}{\fhi^2_\ast} \, dx dt -
    \int_0^T \! \int_0^R ( \px \fhi_\ast)^2
    \frac{\fhi_1^2}{\fhi^3_\ast} \, dx dt
    \\
    & - \frac{1}{2} \int_0^T
    \!  \int_0^R \frac{\fhi_1^2 }{\fhi_\ast} d \mu^\ast (x) dt 
    \\
    &
    \quad + \frac{1}{2} \int_0^T \!  \int_0^R \frac{\fhi_1^2
    }{\fhi_\ast} \, f(t, x, \fhi_\ast) \, dx dt - \int_0^T \!
    \int_0^R \frac{( \px \vfi_1)^2 }{\fhi_\ast}dx dt 
    \\
    & + \int_0^T \!  \int_0^R \p_\vfi f(t,x,\fhi_\ast)\vfi_1^2 dx dt .
  \end{split}
\end{equation}
Next, we use the Young Inequality and we get 
$$ 
2 \int_0^T \!
\int_0^R    \frac{\fhi_1}{\fhi^2_\ast} \, \px \fhi_1 \, \px \fhi_\ast \, dx dt 
\leq \frac{1}{a} \int_0^T \!
\int_0^R   \frac{( \px \fhi_1)^2 }{\fhi_\ast} \, dx dt +
a \int_0^T \!
\int_0^R   \frac{( \px \fhi_\ast)^2 }{\fhi^3_\ast} \, \fhi_1^2 \, dx dt 
$$
for some $a>0$ to be determined in the following. 
We plug the above inequality into~\eqref{e:integrake2} and we get 
\begin{equation}
  \label{e:integrake3}
  \begin{split}
    \int_0^T \! \! \int_0^R \vfi_1(t,x) d\nu_t(x)dt \leq & \left[
      \frac{1}{a} -1 \right] \int_0^T \! \! \int_0^R \frac{( \px
      \fhi_1)^2}{\fhi_\ast} \, dx dt + [a-1] \int_0^T \!  \int_0^R
    \frac{( \px \fhi_\ast)^2 }{\fhi^3_\ast} \, \fhi_1^2 \, dx dt
    \\
    &-
    \frac{1}{2} \int_0^T \!  \int_0^R \frac{\fhi_1^2 }{\fhi_\ast} d
    \mu^\ast (x) dt + \frac{1}{2} \int_0^T \!  \int_0^R \frac{\fhi_1^2
    }{\fhi_\ast}
    \, f(t, x, \fhi_\ast)  \, dx dt  
    \\
    & + \int_0^T \!  \int_0^R \p_\vfi f(t, x,\fhi_\ast)\vfi_1^2
    dx dt
    \\
    & \stackrel{\eqref{e:grande}}{\leq} \left[ \frac{1}{a} -1 \right]
    \int_0^T \! \! \int_0^R \frac{( \px \fhi_1)^2}{\fhi_\ast} \, dx dt
    + [a-1] \int_0^T \!  \int_0^R \frac{( \px \fhi_\ast)^2
    }{\fhi^3_\ast} \, \fhi_1^2 \, dx dt
    \\
    & + \frac{1}{2} \int_0^T \!
    \int_0^R \frac{\fhi_1^2 }{\fhi_\ast} \, f(t, x, \fhi_\ast)
    \, dx dt + \int_0^T \!  \int_0^R \p_\vfi
    f(t,x,\fhi_\ast)\vfi_1^2
    dx dt .
  \end{split}
\end{equation}
Owing to~\eqref{e:accastar}, we get
\begin{equation}
\label{e:inte4}
\begin{split}
 \int_0^T \! \!
\int_0^R 
\p_\vfi f(t,x,\fhi_\ast)\vfi_1^2 \, dx dt  & \leq  \int_0^T \! \!
\int_0^R
\big[ - h_\ast + [\p_\vfi f (t,x,\fhi_\ast) -\p_\vfi f(t,x,h) ]
\big]
 \vfi_1^2 (t, x)
 \, dx dt \\
 & \stackrel{\eqref{e:alpha2}}{\leq} \int_0^T \! \!
\int_0^R
\big[ - h_\ast + \alpha_2 |\fhi_\ast - h|
\big]
 \vfi_1^2
 \, dx  dt. \\
\end{split}
\end{equation}
Next, we choose $a=2$ and by recalling~\eqref{eq:th2.1} we infer 
$$
    \left[ \frac{1}{a} -1 \right] \int_0^T  \! \! \int_0^R    \frac{( \px \fhi_1)^2}{\fhi_\ast} \, dx dt
    = -\frac{1}{2}
   \int_0^T  \! \! \int_0^R    \frac{( \px \fhi_1)^2}{\fhi_\ast} \, dx dt 
        \stackrel{\eqref{eq:th2.1}}{\leq}
        - C(M)
          \int_0^T  \! \! \int_0^R 
        ( \px \fhi_1)^2 dx dt. 
$$
By combining the above equation with~\eqref{e:inte4} and plugging the result in~\eqref{e:integrake3} we arrive at 
\begin{equation}
  \label{e:coercivita}
  \begin{split}
    \int_0^T \! \! \int_0^R \!\!\vfi_1(t,x) d\nu_t(x)dt \leq & - C(M)\!
    \int_0^T \! \! \int_0^R \!\!( \px \fhi_1)^2 \, dx dt - h_\ast\! \int_0^T
    \! \! \int_0^R \!\!\fhi_1^2 \, dx dt +\! \int_0^T \!  \int_0^R \!\!\frac{(
      \px \fhi_\ast)^2 }{\fhi^3_\ast} \, \fhi_1^2 \, dx dt
    \\
    &+
    \frac{1}{2} \int_0^T \!  \int_0^R \frac{\fhi_1^2 }{\fhi_\ast} \,
    f(t, x, \fhi_\ast) \, dx dt + \alpha_2 \int_0^T \!  \int_0^R
    |\vfi_\ast - h |\vfi_1^2 dx dt 
    \\
    &
    \stackrel{\eqref{eq:2.2}}{\leq} - C(M, h_\ast) \| \fhi_1 \|^2_{L^2
      ( ]0, T[; H^1 (]0, R[))} + \int_0^T \!  \int_0^R \frac{( \px
      \fhi_\ast)^2 }{\fhi^3_\ast} \, \fhi_1^2 \, dx dt 
    \\
    &+ C(
    \alpha_1) \int_0^T \!  \int_0^R \frac{\fhi_1^2}{\fhi_\ast}
    |\fhi_\ast - h| \, dx dt + \alpha_2 \int_0^T \!  \int_0^R
    |\vfi_\ast - h |\vfi_1^2
    dx dt 
    \\
    & \stackrel{\eqref{e:grande}}{\leq} - C(M, h_\ast) \| \fhi_1
    \|^2_{L^2 ( ]0, T[; H^1 (]0, R[))} + \underbrace{C(h) \int_0^T \!
      \int_0^R ( \px \fhi_\ast)^2 \, \fhi_1^2 \, dx dt}_{A_1}
    \\
    &+
    \underbrace{C(\alpha_1, \alpha_2, h ) \int_0^T \!  \int_0^R
      \fhi_1^2 |\fhi_\ast - h| \, dx dt}_{A_2}.
  \end{split}
\end{equation}
To control $A_1$ we use~\eqref{e:linftyelledue} and argue as follows:
\begin{equation}
\label{e:auno}
\begin{split}
 A_1 & = C(h) \int_0^T \!
\int_0^R   ( \px \fhi_\ast)^2  \, \fhi_1^2 \, dx dt
\leq C(h)  \int_0^T \! \| \fhi_1 (t, \cdot) \|^2_{L^\infty (]0, R[)}
\int_0^R   ( \px \fhi_\ast)^2  dx dt \\ &
\stackrel{\eqref{e:immersione}}{\leq}
C(h, R)  \int_0^T \! \| \fhi_1 (t, \cdot) \|^2_{H^1 (]0, R[)}
\int_0^R   ( \px \fhi_\ast)^2  dx dt \\ &
 \stackrel{\eqref{e:linftyelledue}}{\leq}
 C(M,h, T, R) \| \fhi_1 \|^2_{L^2 (  ]0, T[; H^1 (]0, R[))}
 \left[ 
     \| \fhi_0 - h \|_{H^1 (]0, R[)}+
     \sup_{t \in ]0, T[} 
     \| \mu_t^\ast \|_{\mathcal M (]0, R[)}  \right]  \\
     & \stackrel{\eqref{eq:initass}}{\leq}
     C(M,h, T, R) \| \fhi_1 \|^2_{L^2 (  ]0, T[; H^1 (]0, R[))} \delta. 
\end{split}
\end{equation}
To control $A_2$ we use~\eqref{e:inftyast} and we get 
\begin{equation}
\label{e:adue}
\begin{split}
  A_2 & =        C(\alpha_1, \alpha_2, h  )
  \int_0^T \!
\int_0^R  \fhi_1^2 |\fhi_\ast - h|  \, dx dt
  \\
  & \stackrel{\eqref{e:inftyast}}{\leq} 
  C(\alpha_1, \alpha_2, M, h, T, R) 
  \| \fhi_1 \|^2_{L^2 (  ]0, T[; H^1 (]0, R[))}
  \left[ 
     \| \fhi_0 - h \|_{H^1 (]0, R[)}+
     \sup_{t \in ]0, T[} 
     \| \mu_t^\ast \|_{\mathcal M (]0, R[)}  \right] \\
     & \stackrel{\eqref{eq:initass}}{\leq}
   C(\alpha_1, \alpha_2, M, h, T, R) 
  \| \fhi_1 \|^2_{L^2 (  ]0, T[; H^1 (]0, R[))} \delta.     
\end{split}
\end{equation}
By plugging~\eqref{e:auno} and~\eqref{e:adue} into~\eqref{e:coercivita} we eventually arrive at 
$$
   \int_0^T  \! \! \int_0^R \vfi_1(t,x) d\nu_t(x)dt 
\leq  \| \fhi_1 \|^2_{L^2 (  ]0, T[; H^1 (]0, R[))} 
\Big[ 
- C(M, h_\ast) +  C(\alpha_1, \alpha_2, M, h, T, R)  \delta
\Big],
$$ 
that is~\eqref{e:integral}. \end{proof}
\subsection{Proof of Lemma~\ref{l:uni1}: estimates on $\fhi_2$}
\label{ss:fhidue}
We now establish a control on the $L^\infty$ norm of $\fhi_2$.
\begin{lemma}
\label{l:intfhi2}
Under the same assumptions as in the statement of Lemma~\ref{l:fiast}, we have 
\begin{equation}
  \label{e:intfhi2}
  \int_0^T \| \fhi_2(t, \cdot) \|_{L^\infty(]0, R[)} dt \leq
  C (\alpha_1, \alpha_2, M, h, T, R, F)
  \| \fhi_1 \|^2_{L^2(]0, T[ ; H^1(]0, R[ ))}.
\end{equation}
\end{lemma}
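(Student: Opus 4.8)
The plan is to estimate $\fhi_2$ by means of the Duhamel representation formula for $\pt-\pxx$ with homogeneous Neumann boundary conditions, as in \S\ref{ss:fhiast}; as there, I would first argue formally (pretending all the functions are smooth) and then recover the rigorous statement through the approximation procedure of \S\ref{ss:tuttorigoroso}. Applied directly to \eqref{eq:Euler2}, the Duhamel formula handles the source term $\partial^2_{\fhi\fhi}g(\fhi_\ast)\fhi_1^2$ at once --- it is genuinely quadratic in $\fhi_1$ --- but it breaks down on the term $-2\nu_t\fhi_1$: paired against the measure $\nu_t$ this term only produces a bound of order $\int_0^T\|\fhi_1(t,\cdot)\|_{L^\infty(]0,R[)}\,dt$, i.e.\ \emph{linear} in $\fhi_1$, which is too weak for \eqref{e:intfhi2}. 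Removing this obstruction is the heart of the argument.

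The key step is to eliminate the measure term $-2\nu_t\fhi_1$ by feeding the equation \eqref{e:defhiuno} for $\fhi_1$ (and then the equation \eqref{e:fhiast} for $\fhi_\ast$) into the equation \eqref{eq:Euler2} for $\fhi_2$. Since $\fhi_\ast\ge h/2$ by \eqref{e:grande}, \eqref{e:defhiuno} can be rewritten as $\fhi_\ast\nu_t=\pxx\fhi_1-\fhi_1\mu^\ast_t+\partial_\fhi g(\fhi_\ast)\fhi_1-\pt\fhi_1$, so that
\[
-2\nu_t\fhi_1=\frac{1}{\fhi_\ast}(\pt-\pxx)(\fhi_1^2)+\frac{2\fhi_1^2\mu^\ast_t}{\fhi_\ast}-\frac{2\partial_\fhi g(\fhi_\ast)\fhi_1^2}{\fhi_\ast}+\frac{2(\px\fhi_1)^2}{\fhi_\ast};
\]
next, using \eqref{e:fhiast} to turn $\tfrac{1}{\fhi_\ast}(\pt-\pxx)(\fhi_1^2)$ into $(\pt-\pxx)\!\left(\fhi_1^2/\fhi_\ast\right)$ plus lower order contributions, and setting $\chi:=\fhi_2-\fhi_1^2/\fhi_\ast$, one obtains that $\chi$ solves $\pt\chi=\pxx\chi-\chi\mu^\ast_t+\partial_\fhi g(t,x,\fhi_\ast)\chi+R_1$ with homogeneous Neumann conditions and $\chi(0,\cdot)=0$ (because $\fhi_1(0,\cdot)=\fhi_2(0,\cdot)=0$ and $\fhi_1,\fhi_2,\fhi_\ast$ all satisfy homogeneous Neumann conditions), where the remainder $R_1$ is an explicit finite sum of terms, each \emph{quadratic} in $\fhi_1$ --- namely $\fhi_1^2$, $\fhi_1\px\fhi_1$ or $(\px\fhi_1)^2$ multiplied by bounded coefficients or by $\px\fhi_\ast$ --- and, crucially, $R_1$ contains no measure.

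Once $\chi$ is at hand the rest is bookkeeping. I would first show $\int_0^T\|R_1(s,\cdot)\|_{L^1(]0,R[)}\,ds\le C(\alpha_1,\alpha_2,M,h,T,R,F)\,\|\fhi_1\|^2_{L^2(]0,T[;H^1(]0,R[))}$: the terms with bounded coefficients are dominated by $\|\fhi_1(s,\cdot)\|^2_{H^1(]0,R[)}$ via \eqref{eq:th2.1}, \eqref{e:grande}, \eqref{e:boundsueffe} and the uniform bounds on $\partial_\fhi g$, $\partial^2_{\fhi\fhi}g$ on $[0,T]\times[0,R]\times[0,M]$, while the two terms containing $\px\fhi_\ast$ additionally use Lemma~\ref{l:linftyelledue} (which under \eqref{eq:initass} gives $\|\px\fhi_\ast(s,\cdot)\|_{L^2(]0,R[)}\le C$) together with \eqref{e:immersione} and the Cauchy--Schwarz inequality in time. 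Then I would write $\chi$ through the Duhamel formula (no initial-datum term, since $\chi(0,\cdot)=0$), use the kernel bounds $\|D(t,\cdot,y)\|_{L^1(]0,R[)}\le K$ and $\|D(t,x,\cdot)\|_{L^\infty(]0,R[)}\le K/\sqrt{t}$ recalled in the Appendix, and use $\|\mu^\ast_s\|_{\mathcal M(]0,R[)}\le\delta\le1$ --- which follows from \eqref{e:muastast} and the convexity of the total variation, since $\mu^\ast$ is a convex combination of $\tilde\mu$ and $\bar\mu$ --- to arrive at a weakly singular integral inequality $\phi(t)\le B\int_0^t(t-s)^{-1/2}\phi(s)\,ds+a(t)$ for $\phi(t):=\|\chi(t,\cdot)\|_{L^\infty(]0,R[)}$, where $a(t):=K\int_0^t(t-s)^{-1/2}\|R_1(s,\cdot)\|_{L^1(]0,R[)}\,ds$ and $B=B(\alpha_1,M,T)$. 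By the generalized (singular) Gronwall inequality one has $\phi(t)\le a(t)+\int_0^t k_B(t-s)a(s)\,ds$ with $k_B\in L^1(0,T)$ depending only on $B$ and $T$; integrating in $t$, applying Fubini, and using the bound on $\int_0^T\|R_1(s,\cdot)\|_{L^1(]0,R[)}\,ds$ yields $\int_0^T\|\chi(t,\cdot)\|_{L^\infty(]0,R[)}\,dt\le C\,\|\fhi_1\|^2_{L^2(]0,T[;H^1(]0,R[))}$. Finally $\fhi_2=\chi+\fhi_1^2/\fhi_\ast$ and $\|\fhi_1^2/\fhi_\ast(t,\cdot)\|_{L^\infty(]0,R[)}\le(2/h)\|\fhi_1(t,\cdot)\|^2_{L^\infty(]0,R[)}\le C\|\fhi_1(t,\cdot)\|^2_{H^1(]0,R[)}$ by \eqref{e:grande} and \eqref{e:immersione}, so adding this contribution gives \eqref{e:intfhi2}.

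I expect the main obstacle to be precisely the first move: recognizing that the measure term $-2\nu_t\fhi_1$ cannot be handled by brute force and must be traded, through the $\fhi_1$- and $\fhi_\ast$-equations, for a collection of quadratic-in-$\fhi_1$ terms, together with identifying the correct auxiliary unknown $\chi=\fhi_2-\fhi_1^2/\fhi_\ast$. The subsequent estimates on $R_1$ and the singular Gronwall step are technical but routine, and the only genuine care needed afterwards is the rigorous justification, since $\fhi_1^2/\fhi_\ast$ need not belong to the natural energy space --- this is exactly why the argument is first carried out formally and then completed by the approximation scheme of \S\ref{ss:tuttorigoroso}.
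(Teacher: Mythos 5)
Your proof is correct and rests on the same central insight as the paper's: the measure term $-2\nu_t\fhi_1$ must be traded for quantities that are genuinely quadratic in $\fhi_1$ by substituting $\nu_t$ from the equation~\eqref{e:defhiuno} for $\fhi_1$, and then the $\fhi_\ast$-equation~\eqref{e:fhiast} to handle $\pt\fhi_\ast$; this is precisely what the paper does inside the proof of Lemma~\ref{l:iditi}, where $\nu_t$ is replaced using the identity from~\S\ref{ss:fhiuno}. What you do differently is to perform this substitution at the level of the PDE rather than inside the Duhamel integral. You observe that $\chi:=\fhi_2-\fhi_1^2/\fhi_\ast$ solves a parabolic problem with no measure in its source (only $\chi\mu^\ast_t$ and a right-hand side $R_1$ that is an explicit quadratic expression in $\fhi_1$, $\px\fhi_1$ with bounded coefficients or factors of $\px\fhi_\ast$), with homogeneous Neumann conditions and zero initial data. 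A single application of Duhamel to $\chi$, the kernel bounds~\eqref{e:Dlinfty}--\eqref{e:Duno}, the $L^1$ bound on $R_1$ coming from~\eqref{eq:th2.1}, \eqref{e:grande}, \eqref{e:immersione} and Lemma~\ref{l:linftyelledue}, and a singular Gronwall step then produce $\int_0^T\|\chi(t,\cdot)\|_{L^\infty}dt\le C\|\fhi_1\|^2_{L^2(H^1)}$, and the $\fhi_1^2/\fhi_\ast$ correction is trivially controlled by~\eqref{e:grande}--\eqref{e:immersione}. The paper, by contrast, applies Duhamel directly to $\fhi_2$, isolates the offending $\nu$-contribution as $I(t)$ in~\eqref{e:iditi}, closes the Gronwall loop for $\fhi_2$ treating $I(t)$ as given (using that it is nondecreasing), and only then estimates $\int_0^T I(t)\,dt$ in the separate Lemma~\ref{l:iditi} by substituting $\nu_t$ under the integral and integrating by parts in both $s$ and $y$, producing the long cascade $J_1,\dots,J_5$ with sub-terms $J_{11},J_{12},J_{111},\dots$. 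Your auxiliary unknown $\chi$ compresses all of those integrations by parts into a single explicit algebraic identity, which is cleaner and considerably shorter; the price is a slightly less elementary singular Gronwall lemma (the resolvent-kernel form), and the need to justify a posteriori that $\fhi_1^2/\fhi_\ast$ is admissible, which your appeal to the approximation scheme of~\S\ref{ss:tuttorigoroso} handles exactly as the paper does. Both routes also require the observation, which you make explicitly, that $\mathrm{ess\,sup}_t\|\mu^\ast_t\|_{\mathcal M}\le\delta$ because $\mu^\ast_t$ is a convex combination of $\tilde\mu_t$ and $\bar\mu_t$.
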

\begin{proof}
First, we recall that, since $g (t, x, \fhi) = f(t, x, \fhi) \fhi$, then 
$$
    \p_\fhi g = \p_\fhi f \cdot \fhi + f, \qquad 
    \p^2_{\fhi \fhi} g =  \p^2_{\fhi \fhi} f \cdot \fhi+ 2 \p_\fhi f. 
$$
This implies  
\begin{equation}
\label{e:bounduniformi1}
\begin{split}
    \|  \p_\fhi g (\cdot, \cdot, \fhi_\ast) \|_{L^\infty (]0, T[ \times ]0, R[)}
    & \stackrel{\eqref{eq:2.2},\eqref{eq:th2.1}}{\leq}
    \alpha_1 M + \| f (\cdot, \cdot, \fhi_\ast) - 
    f (\cdot, \cdot, h) \|_{L^\infty (]0, T[ \times ]0, R[)} \\
    & 
    \stackrel{\eqref{eq:initass},\eqref{e:inftyast}}{\leq}
    C(\alpha_1, M) + C(\alpha_1, M, T, R) \delta \\
    &
    \stackrel{\delta \leq 1}{\leq} C(\alpha_1, M, T, R)
     \end{split} 
\end{equation}
and, by combining~\eqref{eq:2.2},~\eqref{eq:th2.1} and~\eqref{e:alpha2},  
\begin{equation}
\label{e:bounduniformi2}
 \|  \p^2_{\fhi \fhi} g (\cdot, \cdot, \fhi_\ast) \|_{L^\infty (]0, T[ \times ]0, R[)}
 \leq C(\alpha_1, \alpha_2, M). 
\end{equation}
 By applying the Duhamel Representation Formula (see the Appendix) to the linear equation~\eqref{eq:Euler2} we arrive at  
\begin{equation*}
\begin{split}   
    \fhi_2 (t, x)  = & - 
    \int_0^t  \! \! \int_0^R \! \! \! D(t-s, x, y) \fhi_2 (s, y) d \mu^\ast_s (y) ds -
    2 \int_0^t  \! \!  \int_0^R \! \! \! D(t-s, x, y) \fhi_1 (s, y) d \nu_s (y) ds \\
    &  + \int_0^t  \! \! \int_0^R \! \! \! D(t-s, x, y)  \p_\fhi g (s, y, \fhi_\ast) \fhi_2 (s, y) dy ds +
     \int_0^t  \! \! 
     \int_0^R \! \! \! D(t-s, x, y)  \p^2_{\fhi \fhi} g (s, y, \fhi_\ast) \fhi^2_1 (s, y) dy ds.
     \\ 
\end{split}
\end{equation*}
The above representation formula implies 
\begin{equation}
  \label{e:duhamelfhidue}
  \begin{split}
    | \fhi_2 (t, x) |
    \stackrel{\eqref{e:bounduniformi1},\eqref{e:bounduniformi2}}{\leq}&
    \int_0^t \| D(t-s, x, \cdot) \|_{L^\infty (]0, R[)} \| \fhi_2 (s,
    \cdot) \|_{L^\infty (]0, R[)} \| \mu^\ast_s \|_{\mathcal M(]0,R[
      )}
    ds \\
    & + I(t) + C(\alpha_1, M, T, R) \int_0^t \| D(t-s, x, \cdot)
    \|_{L^1 (]0, R[)}
    \| \fhi_2 (s, \cdot) \|_{L^\infty (]0, R[)} ds \\
    & + C(\alpha_1, \alpha_2, M) \int_0^t \| D(t-s, x, \cdot) \|_{L^1
      (]0, R[)}
    \| \fhi_1 (s, \cdot) \|^2_{L^\infty (]0, R[)} ds, \\
  \end{split}
\end{equation}
where we have defined the function $I(t)$ by setting 
\begin{equation}
\label{e:iditi}
  I(t) : = 2  
  \sup_{x \in ]0, R[}\left|
  \int_0^t  \! \! 
   \int_0^R \! \! \! D(t-s, x, y) \fhi_1 (s, y) d \nu_s (y) ds
  \right|.
\end{equation}
From~\eqref{e:duhamelfhidue} we get
\begin{equation}
\label{e:prosegueduhamel}
\begin{split}
   \| \fhi_2 (t, \cdot) \|_{L^\infty (]0, R[)} &
   \stackrel{\eqref{e:Dlinfty},\eqref{e:Duno}}{\leq} 
   \sup_t \| \mu^\ast_t \|_{\mathcal M(]0, T[)}
   \int_0^t \frac{C(R)}{\sqrt{t-s}}  \| \fhi_2 (s, \cdot) \|_{L^\infty (]0, R[)} ds \\
   & \qquad + I(t) + C(\alpha_1, M, T, R) \int_0^t  
   \| \fhi_2 (s, \cdot) \|_{L^\infty (]0, R[)} ds   \\
   &  \qquad + C(\alpha_1, \alpha_2, M) \int_0^t  
   \| \fhi_1 (s, \cdot) \|^2_{L^\infty (]0, R[)} ds\\
   & \stackrel{\eqref{eq:initass},\eqref{e:immersione}}{\leq}
  \delta 
   \int_0^t \frac{C(R)}{\sqrt{t-s}}  \| \fhi_2 (s, \cdot) \|_{L^\infty (]0, R[)} ds +
   I(t)\\
   & \qquad 
   + C(\alpha_1, M, T, R) \int_0^t  
   \| \fhi_2 (s, \cdot) \|_{L^\infty (]0, R[)} ds    \\
   & \qquad 
   + C(\alpha_1, \alpha_2, M, R)  
   \| \fhi_1  \|^2_{L^2 (]0, T[; H^1 (]0, R[))} \\
   & \stackrel{\delta \leq 1}{\leq}
    C(\alpha_1, M, T, R)
   \int_0^t \left( \frac{1}{\sqrt{t-s}}+1 \right)
     \| \fhi_2 (s, \cdot) \|_{L^\infty (]0, R[)} ds \\
     & \qquad +
   I(t)   + C(\alpha_1, \alpha_2, M, R)  
   \| \fhi_1  \|^2_{L^2 (]0, T[; H^1 (]0, R[))} 
   . \phantom{\int}
\end{split}
\end{equation} 
Next, we point out that by definition~\eqref{e:iditi} the function $I$ is nondecreasing and we apply the Gronwall Lemma.
We get 
\begin{equation*}
\begin{split}
        \| \fhi_2 (t, \cdot) \|_{L^\infty (]0, R[} \stackrel{\eqref{e:prosegueduhamel}, \text{Gronwall}}{\leq} & 
         C(\alpha_1, \alpha_2, M, T, R)  \left[  I (t) + 
         \| \fhi_1  \|^2_{L^2 (]0, T[; H^1 (]0, R[))} \right]. 
\end{split}  
\end{equation*}
Finally, we conclude the proof of Lemma~\ref{l:intfhi2} by time integrating the above inequality and  
using Lemma~\ref{l:iditi} below. 
\end{proof}
To conclude the proof of Lemma~\ref{l:intfhi2} we are left to establish the following result. 
\begin{lemma}
\label{l:iditi}
Let $I$ be the same function as in~\eqref{e:iditi}, then we have 
\begin{equation}
\label{e:controli}
   \int_0^T  I (t) 
   dt  \leq C(\alpha_1, M, h, T, R, h, F)
   \| \fhi_1 \|^2_{L^2 (]0, T[; H^1 (]0, R[)}. 
\end{equation}
\end{lemma}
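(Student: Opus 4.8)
The plan is to exploit the equation~\eqref{e:defhiuno} satisfied by $\fhi_1$ in order to replace the measure $\nu_s$ inside the definition~\eqref{e:iditi} of $I$ by a quantity that is \emph{quadratic} in $\fhi_1$; this is the crucial point, since $I$ is a priori only linear in $\fhi_1$ whereas the target bound~\eqref{e:controli} is quadratic. Since $\fhi_\ast\ge h/2$ by~\eqref{e:grande}, equation~\eqref{e:defhiuno} can be divided by $\fhi_\ast$ and gives, at least formally, $\nu_s=\fhi_\ast^{-1}\big(\pxx\fhi_1-\pt\fhi_1+\partial_\fhi g(s,x,\fhi_\ast)\fhi_1\big)-\fhi_\ast^{-1}\fhi_1\mu^\ast_s$. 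Writing $w(t,x):=\int_0^t\!\!\int_0^R D(t-s,x,y)\fhi_1(s,y)\,d\nu_s(y)\,ds$, so that $I(t)=2\sup_x|w(t,x)|$ by~\eqref{e:iditi}, and setting $\Phi:=\fhi_1/\fhi_\ast$, the substitution yields a decomposition $w=W_1-W_2+W_3-W_4$, where $W_1$ carries the weight $D\Phi$ against $\pxx\fhi_1$, $W_2$ carries $D\Phi$ against $\pt\fhi_1$, $W_3$ carries $D\Phi\,\partial_\fhi g(\cdot,\cdot,\fhi_\ast)$ against $\fhi_1$, and $W_4=\int_0^t\!\!\int_0^R D(t-s,x,y)\Phi(s,y)\fhi_1(s,y)\,d\mu^\ast_s(y)\,ds$. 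The aim is then to prove $\sup_x|w(t,x)|\le C\|\fhi_1(t,\cdot)\|^2_{H^1(]0,R[)}+\int_0^t k(t-s)\|\fhi_1(s,\cdot)\|^2_{H^1(]0,R[)}\,ds$ for some $k\in L^1(]0,T[)$, after which~\eqref{e:controli} follows by integrating in $t$, exchanging the order of integration, and noting that $\int_s^T k(t-s)\,dt$ is bounded.

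The terms $W_3$ and $W_4$ are handled directly. Using $|\Phi|\le (2/h)|\fhi_1|$, the embedding~\eqref{e:immersione}, the bound $\|\partial_\fhi g(\cdot,\cdot,\fhi_\ast)\|_{L^\infty}\le C(\alpha_1,M,T,R)$ from~\eqref{e:bounduniformi1}, the kernel estimates $\|D(\tau,x,\cdot)\|_{L^1(]0,R[)}\le K$ and $\|D(\tau,x,\cdot)\|_{L^\infty(]0,R[)}\le K\tau^{-1/2}$ (see~\eqref{e:Duno},~\eqref{e:Dlinfty}), and the constraint $\esssup_s\|\mu^\ast_s\|_{\mathcal M(]0,R[)}\le\delta$ coming from~\eqref{eq:initass}, one gets $\sup_x|W_3(t,x)|\le C\int_0^t\|\fhi_1(s,\cdot)\|^2_{H^1}\,ds$ and $\sup_x|W_4(t,x)|\le C\delta\int_0^t(t-s)^{-1/2}\|\fhi_1(s,\cdot)\|^2_{H^1}\,ds$, both of the required form. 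For $W_1$ we integrate by parts once in $y$; the boundary terms vanish because $\px\fhi_1=0$ at $y=0$ and $y=R$ by the Neumann conditions in~\eqref{e:defhiuno}, and writing $\partial_y\Phi=\px\fhi_1/\fhi_\ast-\fhi_1\px\fhi_\ast/\fhi_\ast^2$ one is left with integrands carrying at most one spatial derivative of $\fhi_1$, estimated through $\|D(\tau,x,\cdot)\|_{L^\infty}$, $\|\partial_y D(\tau,x,\cdot)\|_{L^2(]0,R[)}\le K\tau^{-3/4}$ (from~\eqref{e:Dxduex}), and $\|\px\fhi_\ast(s,\cdot)\|_{L^2}\le C\delta$ (Lemma~\ref{l:linftyelledue} and~\eqref{eq:initass}); this gives $\sup_x|W_1(t,x)|\le C\int_0^t\big((t-s)^{-3/4}+(t-s)^{-1/2}\big)\|\fhi_1(s,\cdot)\|^2_{H^1}\,ds$.

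The main obstacle is $W_2=\int_0^t\!\!\int_0^R D(t-s,x,y)\,\fhi_\ast^{-1}\fhi_1\,\pt\fhi_1\,dy\,ds$, because $\pt\fhi_1$ is not controlled pointwise. We write $\fhi_1\pt\fhi_1=\tfrac12\pt(\fhi_1^2)$ and integrate by parts in $s$. The contribution at $s=0$ vanishes since $\fhi_1(0,\cdot)=0$, while the contribution at $s=t$ equals $\tfrac12\fhi_1^2(t,x)/\fhi_\ast(t,x)$ because $D(t-s,x,\cdot)$ converges to the Dirac mass at $x$ as $s\to t^-$; the remaining bulk term is generated by $\partial_s\big(D(t-s,x,y)/\fhi_\ast(s,y)\big)=-\partial^2_{yy}D(t-s,x,y)/\fhi_\ast(s,y)-D(t-s,x,y)\pt\fhi_\ast(s,y)/\fhi_\ast^2(s,y)$, where we used that $D$ solves the heat equation in its time and $y$ variables. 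In the part containing $\partial^2_{yy}D$ we integrate by parts once more in $y$ — the boundary terms vanishing since the $y$-derivative of $D$ is zero at $y=0,R$ (a property of the Neumann heat kernel, see the Appendix) and since $\px\fhi_1=\px\fhi_\ast=0$ on the boundary — and we are again reduced to integrands with at most one spatial derivative of $\fhi_1$, estimated via $\|\partial_y D(\tau,x,\cdot)\|_{L^2}\le K\tau^{-3/4}$. In the part containing $\pt\fhi_\ast$ we substitute equation~\eqref{e:fhiast}, $\pt\fhi_\ast=\pxx\fhi_\ast-\fhi_\ast\mu^\ast_s+g(s,y,\fhi_\ast)$: the $\pxx\fhi_\ast$ piece is treated by one further integration by parts in $y$ (boundary terms vanishing because $\px\fhi_\ast=0$ on the boundary), the $\fhi_\ast\mu^\ast_s$ piece is controlled by $\delta$ and $\|D(\tau,x,\cdot)\|_{L^\infty}$, and the $g(s,y,\fhi_\ast)$ piece by $|g(\cdot,\cdot,\fhi_\ast)|\le C(\alpha_1,M)|\fhi_\ast-h|\le C\delta$ via~\eqref{e:effestar} and Lemma~\ref{l:fiast}. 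Collecting all the pieces one obtains $\sup_x|w(t,x)|\le C\|\fhi_1(t,\cdot)\|^2_{H^1}+C\int_0^t\big((t-s)^{-3/4}+(t-s)^{-1/2}+1\big)\|\fhi_1(s,\cdot)\|^2_{H^1}\,ds$, and integrating over $t\in]0,T[$, using Fubini, and observing that $\int_s^T\big((t-s)^{-3/4}+(t-s)^{-1/2}+1\big)\,dt\le C(T)$ yields~\eqref{e:controli}. The argument above is formal: it treats $\fhi_1$ and $\fhi_\ast$ as classical solutions, manipulates $\nu_s$ as if it were absolutely continuous, and uses the Dirac behaviour of $D$ at time zero. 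As explained in~\S\ref{ss:tuttorigoroso}, it is made rigorous by approximating $\mu^\ast$, $\fhi_0$ and $\nu$ by smooth data, performing the estimates for the corresponding smooth solutions with constants independent of the approximation, and passing to the limit.
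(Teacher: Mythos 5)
Your proof is correct and follows essentially the same strategy as the paper's: solve equation~\eqref{e:defhiuno} for $\nu_s$, substitute into the definition of $I$, and treat the resulting terms — moving time and space derivatives off $\fhi_1$ via integrations by parts (with boundary terms killed by the Neumann conditions and by $\fhi_1(0,\cdot)=0$), substituting~\eqref{e:fhiast} for $\pt\fhi_\ast$, and estimating each piece with the heat-kernel bounds, $\fhi_\ast\ge h/2$, and the smallness from Lemmas~\ref{l:fiast} and~\ref{l:linftyelledue}. The only cosmetic differences are that you keep $\p_\fhi g$ together (the paper splits it as $f+\p_\fhi f\cdot\fhi_\ast$, giving $J_4,J_5$ separately) and that you pass the spatial derivative to the integrand by direct integration by parts in $y$ using $\partial_y D=0$ at the endpoints, whereas the paper invokes the duality relation~\eqref{e:dualita} with the Dirichlet kernel $\tilde D$ — the two manipulations are equivalent; also the kernel estimate you cite as~\eqref{e:Dxduex} for $\|\partial_y D(\tau,x,\cdot)\|_{L^2}$ is actually~\eqref{e:Dxdue}.
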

\begin{proof}
  First, we use the formula in the proof of Lemma~\ref{le:6.3}
  for $\nu_t$, we recall that $I$ is defined as in~\eqref{e:iditi} and we get 
  \begin{equation}
    \label{e:jeis}
    \begin{split}
      \int_0^T I(t) dt & \stackrel{\eqref{eq:4.2.3}}{=} \underbrace{ 2
        \int_0^T \sup_{x \in ]0, R[} \left| \int_0^t \int_0^R \! \! \!
          D(t-s, x, y) \frac{\pt \fhi_1}{\fhi_\ast} \fhi_1 (s, y) dy
          ds \right|
        dt}_{J_1}
      \\
      & + \underbrace{ 2\int_0^T \sup_{x \in ]0, R[} \left| \int_0^t
          \int_0^R \! \! \! D(t-s, x, y) \frac{\pxx \fhi_1}{\fhi_\ast}
          \fhi_1 (s, y) dy ds \right| dt}_{J_2}
      \\
      & + \underbrace{ 2 \int_0^T \sup_{x \in ]0, R[} \left| \int_0^t
          \int_0^R \! \! \! D(t-s, x, y) \frac{\fhi^2_1}{\fhi_\ast}
          (s,y ) d \mu^\ast_s (y) ds \right|
        dt}_{J_3} 
      \\
      & + \underbrace{ 2 \int_0^T \sup_{x \in ]0, R[} \left| \int_0^t
          \int_0^R \! \! \! D(t-s, x, y) \p_\fhi f (s, y, \fhi_\ast)
          \fhi^2_1 (s, y) dy ds \right| dt}_{J_4}
      \\
      & + \underbrace{ 2 \int_0^T \sup_{x \in ]0, R[} \left| \int_0^t
          \int_0^R \! \! \! D(t-s, x, y) \frac{ f (s, y, \fhi_\ast)}{\fhi_\ast}
          \fhi^2_1 (s, y) dy ds \right| dt}_{J_5}
    \end{split}
  \end{equation}
We now separately control the terms $J_1, \dots, J_5$. First, we consider the term $J_1$: we point out that
 \begin{equation}
 \label{e:completamento}
2\frac{\pt \fhi_1}{\fhi_\ast} \fhi_1 = 
\frac{\pt \fhi_1^2}{\fhi_\ast} = 
\pt \left[ \frac{\fhi_1^2}{\fhi_\ast}  \right]
+ \frac{\fhi_1^2}{\fhi_\ast^2} \pt \fhi_\ast. 
 \end{equation}
and we get 
\begin{equation}
\label{e:geiuno}
\begin{split}
  J_1  \stackrel{\eqref{e:completamento}}{\leq}&
    \int_0^T \sup_{x \in ]0, R[} 
    \left|
  \int_0^t    \int_0^R \! \! \! D(t-s, x, y) \partial_s 
  \left[ \frac{\fhi^2_1}{\fhi_\ast}  \right] (s, y) dy
   ds \right|
  dt \\ & 
  +  \underbrace{  \int_0^T 
  \sup_{x \in ]0, R[} \left|
  \int_0^t   \int_0^R \! \! \! D(t-s, x, y) \frac{\fhi_1^2}{\fhi_\ast^2} 
  \partial_s
   \fhi_\ast (s, y) dy   ds  \right|
  dt.}_{J_{12}}  
\end{split}
\end{equation}
By using the Integration by Parts Formula and the initial condition $\fhi_1 (t=0)\equiv 0$ we get
\begin{equation}
\label{e:geiuno1}
\begin{split}  
   \int_0^T  
  \sup_{x \in ]0, R[} &
    \left|
 \int_0^t \int_0^R \! \! \! D(t-s, x, y) \partial_s 
  \left[ \frac{\fhi^2_1}{\fhi_\ast}  \right] (s, y) dy
   ds  \right|
  dt \\ & 
  \leq 
   \int_0^T
    \sup_{x \in ]0, R[} 
    \left|
  \int_0^t  \int_0^R \! \! \! \partial_s D(t-s, x, y) 
  \frac{\fhi^2_1}{\fhi_\ast}   (s, y) dy 
   ds  \right|
  dt \\ & \qquad \qquad 
  +
   \int_0^T  \lim_{s\to t^-}
  \left| 
     \sup_{x \in ]0, R[} 
    \int_0^R  D(t-s, x, y) 
     \frac{\fhi^2_1}{\fhi_\ast}  (s, y) dy
   \right| dt \\
  & 
  \stackrel{\eqref{e:grande},\eqref{e:Duno}}{\leq}
  \underbrace{ \int_0^T
    \int_0^t  \sup_{x \in ]0, R[} 
    \left|
     \int_0^R \! \! \! \partial_{xx} D(t-s, x, y) 
  \frac{\fhi^2_1}{\fhi_\ast}   (s, y) dy
   \right|
   ds 
  dt}_{J_{11}} \\ &
   \qquad \qquad + C(h) \int_0^T \| \fhi_1 (t, \cdot) \|^2_{L^\infty (]0, R[)} dt \\
   & \stackrel{\eqref{e:immersione}}{\leq} J_{11} + 
   C(h, R) \int_0^T \| \fhi_1 (t, \cdot) \|^2_{H^1 (]0, R[)} dt
   \phantom{\int} \\
   & 
   \leq J_{11} + 
   C(h, R)  \| \fhi_1 \|^2_{L^2 (]0, T[ ; H^1 (]0, R[))}. 
    \phantom{\int} 
\end{split}
\end{equation}
By plugging the above formula into~\eqref{e:geiuno} we get 
\begin{equation}
\label{e:geiuno2}
  J_1 \leq J_{11} + J_{12} +  C(h, R)  \| \fhi_1 \|^2_{L^2 (]0, T[ ; H^1 (]0, R[))}.
\end{equation}
We now focus on the term $J_{11}$ defined in~\eqref{e:geiuno1}. First, we point out that 
\begin{equation}
\label{e:derivatax}
   \partial_y \left[ \frac{\fhi_1^2}{\fhi_\ast} \right] =
   2 \frac{\fhi_1}{\fhi_\ast} \partial_y \fhi_1 - 
    \frac{\fhi_1^2}{\fhi_\ast^2} \partial_y \fhi_\ast. 
\end{equation}
We infer that 
\begin{equation}
\label{e:geiunouno}
\begin{split}
 J_{11} \stackrel{\eqref{e:dualita}}{\leq} &
  \int_0^T   \int_0^t \sup_{x \in ]0, R[} 
    \left|
 \int_0^R \! \! \! \partial_{x} \tilde D(t-s, x, y) 
  \partial_y \left[ \frac{\fhi^2_1}{\fhi_\ast}  
  \right] (s, y) dy
  \right|  ds 
  dt  \\
  & \stackrel{\eqref{e:derivatax}}{\leq} 
   K \underbrace{\int_0^T   \int_0^t  
   \sup_{x \in ]0, R[} 
    \left|
    \int_0^R \! \! \! \partial_{x} \tilde D(t-s, x, y) 
   \frac{\fhi_1}{\fhi_\ast} \partial_y \fhi_1 
   (s, y) dy \right|
   ds 
   dt }_{J_{111}} \\  & \qquad + K \underbrace{
   \int_0^T   \int_0^t \sup_{x \in ]0, R[} 
    \left|
  \int_0^R \! \! \! \partial_{x} \tilde D(t-s, x, y) 
  \frac{\fhi_1^2}{\fhi_\ast^2} \partial_y \fhi_\ast
  (s, y) dy  \right|
   ds 
  dt.}_{J_{112}}
\end{split}
\end{equation}
We now control $J_{111}$:
\begin{equation}
\label{e:geiunounouno}
 \begin{split}
 J_{111} & \leq 
 \int_0^T  
  \int_0^t \sup_{x \in ]0, R[}  \int_0^R
  \left|   \partial_{x} \tilde D(t-s, x, y) 
   \frac{\fhi_1}{\fhi_\ast} \partial_y \fhi_1 
   (s, y)  \right|  dy
   ds 
  dt \\
  & \stackrel{\text{H\"older}}{\leq}
   \int_0^T  
  \int_0^t  \sup_{x \in ]0, R[}  
  \left\| \partial_{x} \tilde D(t-s, x, \cdot) 
  \right\|_{L^2 (]0, R[)}
  \left\|  \frac{\fhi_1}{\fhi_\ast} \partial_y \fhi_1 
   (s, \cdot) \right\|_{L^2 (]0, R[)} 
   ds 
  dt \\ &
  \stackrel{\eqref{e:grande},~\eqref{e:tDxdue}}{\leq}
  C(h) \int_0^T \int_0^t  \frac{1}{(t-s)^{3/4}}
  \| \fhi_1(s, \cdot)  \|_{L^\infty(]0, R[)}
  \| \partial_y \fhi_1 \|_{L^2 (]0, R[)} ds dt \\ &
  \stackrel{\text{Young}}{\leq}
  C(h) \int_0^T \int_0^t  \frac{1}{(t-s)^{3/4}}
  \Big[ \| \fhi_1(s, \cdot)  \|^2_{L^\infty(]0, R[)} + 
  \| \partial_y \fhi_1 \|^2_{L^2 (]0, R[)} \Big] ds dt  \\ & 
  \stackrel{\eqref{e:immersione}}{\leq}
   C(h, R) \int_0^T \int_0^t  \frac{1}{(t-s)^{3/4}}
   \| \fhi_1(s, \cdot)  \|^2_{H^1(]0, R[)} ds \, dt \\
  & =  C(h, R) 
  \int_0^T \| \fhi_1(s, \cdot)  \|^2_{H^1(]0, R[)} 
  \int_s^T  \frac{1}{(t-s)^{3/4}} dt \, ds  \\
  & \leq  C(h, T, R) 
  \| \fhi_1 \|^2_{L^2 (]0, T[; H^1 (]0, R[))}. \phantom{\int}
 \end{split}
\end{equation}
Next, we control $J_{112}$:
\begin{equation}
  \label{e:geiunounodue}
  \begin{split}
    J_{112} & \leq \int_0^T \int_0^t \sup_{x \in ]0, R[} \int_0^R
    \left| \partial_{x} \tilde D(t-s, x, y)
      \frac{\fhi_1^2}{\fhi_\ast^2} \partial_y \fhi_\ast (s, y) \right|
    dy ds  dt
    \\
    & \stackrel{\text{H\"older}}{\leq} \int_0^T \sup_{x \in ]0, R[}
    \int_0^t \left\| \partial_{x} \tilde D(t-s, x, \cdot)
    \right\|_{L^2(]0, R[)} \left\| \frac{\fhi_1^2}{\fhi_\ast^2} (s,
      \cdot) \right\|_{L^\infty(]0, R[)} \| \partial_y \fhi_\ast (s,
    \cdot) \|_{L^2 (]0, R[)} ds dt 
    \\
    &
    \stackrel{\eqref{eq:initass},\eqref{e:linftyelledue}}{\leq} C(\alpha_1,M,
    T, R) \delta \int_0^T \sup_{x \in ]0, R[} \int_0^t
    \left\| \partial_{x} \tilde D(t-s, x, \cdot) \right\|_{L^2(]0,
      R[)} \left\| \frac{\fhi_1^2}{\fhi_\ast^2} (s, \cdot)
    \right\|_{L^\infty(]0, R[)} dsdt
    \\
    &
    \stackrel{\eqref{e:grande},\eqref{e:tDxdue}}{\leq} C(\alpha_1, M, h, T, R)
    \delta \int_0^T \int_0^t \frac{1}{(t-s)^{3/4}} \| \fhi_1 (s,
    \cdot) \|_{L^\infty(]0, R[)}^2 ds dt
    \\
    &
    \stackrel{\eqref{e:immersione}}{\leq} C(\alpha_1, M, h, T, R) \delta
    \int_0^T \int_0^t \frac{1}{(t-s)^{3/4}} \| \fhi_1 (s, \cdot)
    \|_{H^1(]0, R[)}^2 ds
    dt  
    \\
    & \leq C(\alpha_1, M, h, T, R) \delta \| \fhi_1 \|^2_{L^2 (]0, T[; H^1 (]0,
      R[))}. \phantom{\int}
  \end{split}
\end{equation}
By combining~\eqref{e:geiunounouno} and~\eqref{e:geiunounodue} and recalling that $\delta \leq 1$ we get 
\begin{equation}
\label{e:geiunounofinal}
   J_{11} \le K \left(J_{111} + J_{112}\right)  \leq 
   C(\alpha_1, M, h, T, R)
      \| \fhi_1 \|^2_{L^2 (]0, T[; H^1 (]0, R[))}.
\end{equation}
We now control $J_{12}$: we recall that $J_{12}$ is defined as in~\eqref{e:geiuno} and that $\fhi_\ast$ satisfies~\eqref{e:fhiast}. We 
get 
\begin{equation}
\label{e:geiunodue}
\begin{split}
   J_{12} & \stackrel{\eqref{e:fhiast}}{\leq}
     \underbrace{2 \int_0^T 
  \int_0^t  \sup_{x \in ]0, R[} \left|  \int_0^R \! \! \! D(t-s, x, y) \frac{\fhi_1^2}{\fhi_\ast^2} 
  \partial^2_{yy}
   \fhi_\ast (s, y) dy
  \right| ds 
   dt}_{J_{121}} \\ & +
    \underbrace{2 \int_0^T
  \int_0^t  \sup_{x \in ]0, R[} \left|  \int_0^R \! \! \! D(t-s, x, y) \frac{\fhi_1^2}{\fhi_\ast} 
  d \mu^\ast_s (y)  
  \right|  ds 
  dt}_{J_{122}} \\ & + 
   \underbrace{2 \int_0^T
  \int_0^t   \sup_{x \in ]0, R[} \left|  \int_0^R \! \! \! D(t-s, x, y) \frac{\fhi_1^2}{\fhi_\ast} 
  f(s, y, \fhi_\ast) dy  
  \right|  ds 
  dt. }_{J_{123}}
\end{split}
\end{equation}
To control $J_{121}$, we first point out that 
\begin{equation}
\label{e:scarico2}
    \partial_y \left[ \frac{\fhi_1^2}{\fhi_\ast^2} 
  \partial_y
   \fhi_\ast \right] =
  \frac{\fhi_1^2}{\fhi_\ast^2}  \partial^2_{yy} \fhi_\ast +
  2 \frac{\fhi_1}{\fhi_\ast^2}  \partial_y \fhi_1 \partial_y \fhi_\ast 
  - 2 \frac{\fhi_1^2}{\fhi_\ast^3} (\partial_y \fhi_\ast )^2.  
\end{equation}
This implies 
\begin{equation}
\label{e:geiunodueuno}
\begin{split}
    J_{121} & \stackrel{\eqref{e:scarico2}}{\leq}
   K \underbrace{\int_0^T 
  \int_0^t \sup_{x \in ]0, R[} \left|\int_0^R \! \! \! 
  \partial_y D(t-s, x, y) \frac{\fhi_1^2}{\fhi_\ast^2} 
  \partial_y
   \fhi_\ast (s, y) dy
  \right|   ds 
  dt}_{J_{1211}} \\ & + K
  \underbrace{\int_0^T
  \int_0^t  \sup_{x \in ]0, R[} \left| \int_0^R \! \! \! 
   D(t-s, x, y)  \frac{\fhi_1}{\fhi_\ast^2}  \partial_y \fhi_1 \partial_y \fhi_\ast 
   (s, y) dy
  \right|  ds 
   dt}_{J_{1212}} \\ & +
  K  \underbrace{\int_0^T 
  \int_0^t  \sup_{x \in ]0, R[} \left| \int_0^R \! \! \! 
   D(t-s, x, y)  \frac{\fhi_1^2}{\fhi_\ast^3} (\partial_y \fhi_\ast )^2 
   (s, y) dy
   \right|  ds 
  dt.}_{J_{1213}} \\
\end{split} 
\end{equation}
We have 
\begin{equation}
  \label{e:geiunodueunouno}
  \begin{split}
    J_{1211} & \stackrel{\text{H\"older}}{\leq} K \int_0^T \!\!\int_0^t
    \left\| \frac{\fhi_1^2}{\fhi_\ast^2} (s, \cdot)
    \right\|_{L^\infty(]0, R[)} \| \partial_y \fhi_\ast (s, \cdot)
    \|_{L^2 (]0, R[)} \sup_{x \in ]0, R[} \|
    \partial_y D(t-s, x, \cdot)\|_{L^2 (]0, R[ )} ds
    dt 
    \\
    & \stackrel{\eqref{eq:initass},~\eqref{e:linftyelledue}}{\leq}
    C(\alpha_1, M, T, R) \delta \int_0^T \int_0^t \left\|
      \frac{\fhi_1^2}{\fhi_\ast^2} (s, \cdot) \right\|_{L^\infty(]0,
      R[)} \sup_{x \in ]0, R[} \|
    \partial_y D(t-s, x, \cdot)\|_{L^2 (]0, R[ )} ds
    dt 
    \\
    & \stackrel{\eqref{e:grande},\eqref{e:Dxdue}}{\leq} C(\alpha_1, M, h, T, R)
    \delta
    \int_0^T \int_0^t \frac{1}{(t-s)^{\frac{3}{4}}} \| \fhi_1 (s, \cdot) \|^2_{L^\infty (]0, R[)} ds dt 
    \\
    & \leq C(\alpha_1, M, h, T, R) \delta \int_0^T \| \fhi_1 (s, \cdot)
    \|^2_{H^1 (]0, R[)}
    \int_s^T \frac{1}{(t-s)^{\frac{3}{4}}} dt ds
    \\
    & \leq C(\alpha_1, M, h, T, R) \delta \| \fhi_1 \|^2_{L^2 (]0, T[; H^1 (]0,
      R[))}.  \phantom{\int}
  \end{split}
\end{equation}
We also have 
\begin{equation}
  \label{e:geiunodueunodue}
  \begin{split}
    J_{1212} & \stackrel{\eqref{e:grande}}{\leq} \!\!C(h)\!\! \int_0^T
    \!\!\!\int_0^t \!\!\| \fhi_1(s, \cdot) \|_{L^\infty (]0, R[)} \| \partial_y
    \fhi_1 \partial_y \fhi_\ast (s, \cdot) \|_{L^1 (]0, R[)}
    \!\!\sup_{x \in ]0, R[} \!\! \| D(t-s, x, \cdot ) \|_{L^\infty (]0, R[) } ds dt
    \\
    & \stackrel{\text{H\"older}, \eqref{e:Dlinfty}}{\leq} \!\!
    \!\!\!\!\!\!C(h, R)
    \!\int_0^T \!\!\!\int_0^t \!\!\frac{1}{\sqrt{t-s}} \| \fhi_1 (s, \cdot)
    \|_{L^\infty (]0, R[)} \| \partial_y \fhi_1 (s, \cdot) \|_{L^2
      (]0, R[)} \| \partial_y \fhi_\ast (s, \cdot) \|_{L^2 (]0, R[)}
    ds dt
    \\
    & \stackrel{\eqref{e:linftyelledue},\eqref{e:immersione}}{\leq}
    C(\alpha_1, M, h, T, R) \delta \int_0^T \int_0^t \frac{1}{\sqrt{t-s}} \|
    \fhi_1 (s, \cdot) \|_{H^1 (]0, R[)} \| \partial_y \fhi_1 (s,
    \cdot) \|_{L^2 (]0, R[)} ds dt
    \\
    & \leq C(\alpha_1, M, h, T, R) \delta \int_0^T \| \fhi_1 (s, \cdot)
    \|^2_{H^1 (]0, R[)} \int_s^t \frac{1}{\sqrt{t-s}} dt ds
    \\
    & \leq C(\alpha_1, M, h, T, R) \delta \| \fhi_1 \|^2_{L^2 (]0, T[; H^1 (]0,
      R[)).}  
  \end{split}
\end{equation}
Finally, we have 
$$
    J_{1213} \leq 
   \int_0^T 
  \int_0^t 
  \left\| \frac{\fhi_1^2}{\fhi_\ast^3}
  (s, \cdot) \right\|_{L^\infty (]0, R[)}
   \|  \partial_y \fhi_\ast (s, \cdot) \|^2_{L^2 (]0, R[)}
   \sup_{x \in ]0, R[}  \|D(t-s, x, \cdot ) \|_{L^\infty (]0, R[) }
  ds dt 
  $$
  and by arguing as in~\eqref{e:geiunodueunouno} and \eqref{e:geiunodueunodue}
  we eventually arrive at 
\begin{equation}
\label{e:geiunodueunotre}
      J_{1213}
      \leq 
    C(\alpha_1, M, h, T, R) \delta^2 
    \| \fhi_1  \|^2_{L^2 (]0, T[; H^1 (]0, R[))}. 
\end{equation}
By combining~\eqref{e:geiunodueunouno},~\eqref{e:geiunodueunodue} and \eqref{e:geiunodueunotre} and recalling that $\delta \leq 1$ we obtain 
\begin{equation}
\label{e:geiunodueunofinal}
      J_{121} =  J_{1211}+  J_{1212} +  J_{1213} \leq 
     C(\alpha_1, M, h, T, R) \delta 
    \| \fhi_1  \|^2_{L^2 (]0, T[; H^1 (]0, R[)).} 
\end{equation} 
We now focus on $J_{122}$, which is defined in~\eqref{e:geiunodue}. We control 
 it by arguing as follows:
\begin{equation}
\label{e:geiunoduedue}
\begin{split}
  J_{122} & \stackrel{\eqref{e:grande}}{\leq}  
  C(h) \int_0^T 
  \int_0^t 
  \| \mu_s^\ast \|_{\mathcal M(]0, R[)} 
   \|  \fhi_1 (s, \cdot) \|^2_{L^\infty (]0, R[)}
    \sup_{x \in ]0, R[}  \| D(t-s, x, \cdot ) \|_{L^\infty (]0, R[) }
  ds dt \\
  &  \stackrel{\eqref{e:Dlinfty},\eqref{e:immersione}}{\leq}
  C(h, R) \sup_{t \in ]0, T[}  \| \mu_t^\ast \|_{\mathcal M(]0, R[)} 
  \int_0^T \int_0^t \frac{1}{\sqrt{t-s}} 
    \|  \fhi_1 (s, \cdot) \|^2_{H^1 (]0, R[)} ds dt  \\
    &  \stackrel{\eqref{eq:initass}}{\leq}
    C(h, R) \delta \int_0^T   
     \|  \fhi_1 (s, \cdot) \|^2_{H^1 (]0, R[)} \int_s^T 
     \frac{1}{\sqrt{t-s}} 
      dt ds \\ & \leq C(h, T, R) \delta 
      \|  \fhi_1  \|^2_{L^2 (]0, T[; H^1 (]0, R[))}. \phantom{\int}
\end{split}
\end{equation}
To control $J_{123}$, we first point out that
\begin{equation}
\label{e:effeast}
   \| f(\cdot, \cdot, \fhi_\ast) \|_{L^\infty (]0, T[ \times ]0, R[)}
   \stackrel{\eqref{eq:2.2}}{\leq}
   \alpha_1 \| \fhi_\ast - h \|_{L^\infty (]0, T[ \times ]0, R[)}
   \stackrel{\eqref{eq:initass},\eqref{e:inftyast}}{\leq}
   C(\alpha_1,M, T, R) \delta. 
\end{equation}
Next, we recall that $J_{123}$ is defined in~\eqref{e:geiunodue} and 
we control it by arguing as follows:
\begin{equation}
\label{e:geiunoduetre}
\begin{split}
     J_{123} & \stackrel{\eqref{e:grande}}{\leq} 
     C(h) \int_0^T  \! \!
  \int_0^t   
   \|  \fhi_1 (s, \cdot) \|^2_{L^\infty (]0, R[)}
   \| f(s, \cdot, \fhi_\ast) \|_{L^\infty ( ]0, R[)}
   \! \sup_{x \in ]0, R[}  \| D(t-s, x, \cdot ) \|_{L^1 (]0, R[) }
  ds dt \\
  & \stackrel{\eqref{e:effeast},\eqref{e:immersione}}{\leq}
    C(\alpha_1,M, h, T, R) \delta
   \int_0^T \! \!
  \int_0^t  
   \|  \fhi_1 (s, \cdot) \|^2_{H^1 (]0, R[)}
   \sup_{x \in ]0, R[}  \| D(t-s, x, \cdot ) \|_{L^1 (]0, R[) } 
   ds dt  \\
   &  \stackrel{\eqref{e:Duno}}{\leq}
   C(\alpha_1,M, h, T, R) \delta 
   \int_0^T 
  \int_0^t  
   \|  \fhi_1 (s, \cdot) \|^2_{H^1 (]0, R[)}ds dt \\
   & \leq C(\alpha_1,M, h, T, R) \delta 
    \|  \fhi_1  \|^2_{L^2 (]0, T[; H^1 (]0, R[))}. \phantom{\int}
\end{split}
\end{equation}
By recalling that $\delta \leq 1$ we arrive at
\begin{equation}
\label{e:geiunodue2}
 J_{12} \stackrel{\eqref{e:geiunodue}} \le
  J_{121}+J_{122} + J_{123} \stackrel{\eqref{e:geiunodueunofinal},\eqref{e:geiunoduedue},\eqref{e:geiunoduetre}}{\leq}
  C(\alpha_1, h, M, R, T) 
    \|  \fhi_1  \|^2_{L^2 (]0, T[; H^1 (]0, R[))}. 
\end{equation}
Finally, we recall~\eqref{e:geiuno2} and we conclude that
\begin{equation}
\label{e:geiuno3}
    J_1 \stackrel{\eqref{e:geiuno2}}{\leq} 
    C(h, R)  \|  \fhi_1  \|^2_{L^2 (]0, T[; H^1 (]0, R[))} + J_{11} + J_{12}
    \stackrel{\eqref{e:geiunounofinal},~\eqref{e:geiunodue2}}{\leq}
    C(\alpha_1, h, M, R, T) 
    \|  \fhi_1  \|^2_{L^2 (]0, T[; H^1 (]0, R[))}. 
\end{equation}
We now focus on the term $J_2$. We recall that $J_2$ is defined as in~\eqref{e:jeis} and we preliminary point out that
$$
   \frac{\fhi_1 }{\fhi_\ast} \pxx \fhi_1 = \px \left[
   \frac{\fhi_1 }{\fhi_\ast} \px \fhi_1
   \right] -
   \frac{(\px \fhi_1)^2}{\fhi_\ast} +
   \frac{\fhi_1 \px \fhi_1}{\fhi^2_\ast}
   \px \fhi_\ast.
$$
Owing to the Integration by Parts Formula, this implies 
\begin{equation}
  \label{e:geiduescarico}
  \begin{split}
    \int_0^T \int_0^t \sup_{x \in ]0, R[} & \left| \int_0^R \! \! \!
      D(t-s, x, y) \frac{\fhi_1 }{\fhi_\ast} \pxx \fhi_1(s, y) dy
    \right| ds dt 
    \\
    & \leq \underbrace{ \int_0^T \int_0^t \sup_{x \in
        ]0, R[} \left| \int_0^R \! \! \! \partial_y D(t-s, x, y) \frac{\fhi_1
        }{\fhi_\ast} \partial_y \fhi_1(s, y) dy \right| ds
      dt  }_{J_{21}} 
    \\
    & + \underbrace{ \int_0^T \int_0^t \sup_{x \in ]0, R[} \left|
        \int_0^R \! \! \! D(t-s, x, y) \frac{( \px \fhi_1)^2
        }{\fhi_\ast} (s, y) dy \right| ds
      dt }_{J_{22}} 
    \\
    & + \underbrace{ \int_0^T \int_0^t \sup_{x \in ]0, R[} \left|
        \int_0^R \! \! \! D(t-s, x, y) \frac{ \fhi_1 }{\fhi^2_\ast}
        \px \fhi_1 \px \fhi_\ast (s, y) dy \right| ds dt .}_{J_{23}}
  \end{split}
\end{equation} 
To control $J_{21}$ we argue as follows:
\begin{equation}
  \label{e:geidueuno}
  \begin{split}
    J_{21} & \stackrel{\eqref{e:grande}}{\leq} C(h) \int_0^T \int_0^t
    \sup_{x \in ]0, R[} \| \partial_y D(t-s, x, \cdot) \|_{L^2(]0,R[)}
    \| \fhi_1(s, \cdot) \|_{L^\infty (]0, R[ )} \| \px \fhi_1(s,
    \cdot) \|_{L^2 (]0, R[)} ds
    dt
    \\
    & \stackrel{\eqref{e:immersione}}{\leq} C(h, R) \int_0^T \int_0^t
    \sup_{x \in ]0, R[} \| \partial_y D(t-s, x, \cdot) \|_{L^2(]0,R[)} \|
    \fhi_1(s, \cdot) \|^2_{H^1 (]0, R[ )} ds
    dt
    \\
    & \stackrel{\eqref{e:Dxdue}}{\leq} C( h,R) \int_0^T \int_0^t
    \frac{1}{(t-s)^{3/4}} \| \fhi_1(s, \cdot) \|^2_{H^1 (]0, R[ )} ds
    dt
    \\
    & = C(h, R) \int_0^T \| \fhi_1(s, \cdot) \|^2_{H^1 (]0, R[ )}
    \int_s^T \frac{1}{(t-s)^{3/4}} dt
    ds
    \\
    & \leq C(h, R, T) \| \fhi_1 \|^2_{L^2 (]0, T[ ;H^1 (]0, R[
      ))}. \phantom{\int}
  \end{split}
\end{equation}
Next, we control $J_{22}$ by arguing as follows: 
\begin{equation}
\label{e:geiduedue}
\begin{split}
   J_{22} 
   & \stackrel{\eqref{e:grande}}{\leq} C(h)
    \int_0^T  \int_0^t  \sup_{x \in ]0, R[}  
     \| D(t-s, x, \cdot)\|_{L^\infty(]0, R[)} 
  \| \px \fhi_1 (s, \cdot)\|^2_{L^2 (]0, R[)}   ds 
  dt \\
  &  \stackrel{\eqref{e:Dlinfty}}{\leq} C(h)
   \int_0^T  
  \int_0^t   \frac{1}{\sqrt{t-s}} 
  \| \fhi_1 (s, \cdot)\|^2_{H^1 (]0, R[)}   ds 
   dt \\ 
   & \leq C(h) \int_0^T  \| \fhi_1 (s, \cdot)\|^2_{H^1 (]0, R[)} \int_s^T 
   \frac{1}{\sqrt{t-s}} dt ds 
   \\ & \leq C(h, T) \| \fhi_1 \|^2_{L^2 (]0, T[ ;H^1 (]0, R[ ))}. \phantom{\int} 
\end{split}
\end{equation}
Finally, we control $J_{23}$:
\begin{equation}
\label{e:geiduetre}
\begin{split}
J_{23}  & \stackrel{\eqref{e:grande}}{\leq} C(h)
\int_0^T 
  \int_0^t  \sup_{x \in ]0, R[} 
    \int_0^R \! \! \!   
  \left| D(t-s, x, y) \fhi_1  \px \fhi_1 \px \fhi_\ast (s, y)   \right| dy
   ds 
  dt \\
  &  \stackrel{\text{H\"older}}{\leq}
  C(h)
   \int_0^T \sup_{x \in ]0, R[} 
  \int_0^t \| D(t-s, x, \cdot) \|_{L^\infty(]0, R[)}  \\
  & \qquad \qquad \times 
  \| \fhi_1  (s, \cdot)  \|_{L^\infty(]0, R[)}  
  \| \px \fhi_1 (s, \cdot) \|_{L^2(]0, R[)}
  \| \px \fhi_\ast (s, \cdot)\|_{L^2(]0, R[)}   
   ds 
  dt \\
  &  \stackrel{\eqref{e:Dlinfty},\eqref{e:immersione}}{\leq}
  C(h,R)
   \int_0^T
  \int_0^t \frac{1}{\sqrt{t-s}}  
  \| \fhi_1  (s, \cdot)  \|^2_{H^1(]0, R[)} 
  \| \px \fhi_\ast (s, \cdot)\|_{L^2(]0, R[)}   
   ds 
  dt \\
   &  \stackrel{\eqref{e:linftyelledue}}{\leq}
  C(\alpha_1, M,h, T, R) \delta 
   \int_0^T
   \| \fhi_1  (s, \cdot)  \|^2_{H^1(]0, R[)}   
  \int_s^t \frac{1}{\sqrt{t-s}} 
    dt
  ds \\ & \leq 
  C(\alpha_1, M, h,T, R) \delta \| \fhi_1 \|^2_{L^2 (]0, T[ ;H^1 (]0, R[ ))}. \phantom{\int} 
\end{split}
\end{equation}
By combining the above inequalities and recalling that $\delta\leq 1$
we arrive at
\begin{equation}
\label{e:geidue2}
   J_2 \stackrel{\eqref{e:geiduescarico}}{\le}
   J_{21}+J_{22}+J_{23} \stackrel{\eqref{e:geidueuno},\eqref{e:geiduedue},\eqref{e:geiduetre}}{\leq}
    C(\alpha_1, M,h, T, R)  \| \fhi_1 \|^2_{L^2 (]0, T[ ;H^1 (]0, R[ ))}.   
\end{equation}
We now focus on the term $J_3$, which is defined as in~\eqref{e:jeis}, and we control it by arguing as follows: 
\begin{equation}
  \label{e:geitre}
  \begin{split}
    J_3 & \stackrel{\eqref{e:grande}}{\leq} C(h) \int_0^T \int_0^t \| \fhi_1
    (s, \cdot) \|^2_{L^\infty(]0, R[)} \|\mu^\ast_s \|_{\mathcal M
      (]0, R[)} \sup_{x \in ]0, R[} \| D(t-s, x, \cdot)
    \|_{L^\infty(]0, R[)} ds dt 
    \\
    &
    \stackrel{\eqref{eq:initass},\eqref{e:Dlinfty}}{\leq} C(h) \delta
    \int_0^T \int_0^t \! \! \frac{1}{\sqrt{t-s}} \| \fhi_1 (s, \cdot)
    \|^2_{L^\infty(]0, R[)} ds dt 
    \\
    &
    \stackrel{\eqref{e:immersione}}{\leq} C(h, R) \delta \int_0^T \|
    \fhi_1 (s, \cdot) \|^2_{H^1(]0, R[)} \int_s^T \frac{1}{\sqrt{t-s}}
    dt ds
    \\
    & \leq C(h, T, R) \delta \| \fhi_1 \|^2_{L^2 (]0, T[ ;H^1 (]0, R[
      ))}. \phantom{\int}
  \end{split}
\end{equation}
Recalling that $J_4$ is defined in~\eqref{e:jeis}, we have:
\begin{equation}
\label{e:geiquattro}
\begin{split}
   J_4 & \leq K
   \int_0^T
  \int_0^t   \sup_{x \in ]0, R[}      
  \| D(t-s, x, \cdot) \|_{L^1(]0, R[)}
  \| \fhi_1  (s, \cdot) \|^2_{L^\infty(]0, R[)} 
  \| \partial_\fhi f(s, \cdot, \fhi_\ast) 
  \|_{L^\infty (]0, R[)}
   ds 
  dt \\ &
  \stackrel{\eqref{eq:2.2},\eqref{e:Duno}}{\leq}
  C(\alpha_1) \int_0^T 
  \int_0^t  \! \!     
  \| \fhi_1  (s, \cdot) \|^2_{L^\infty(]0, R[)} 
   ds 
  dt \\ 
  & \stackrel{\eqref{e:immersione}}{\leq} 
  C(\alpha_1, T, R) \| \fhi_1 \|^2_{L^2 (]0, T[ ;H^1 (]0, R[ ))}. \phantom{\int}
\end{split}
\end{equation}
Finally, for the term $J_5$, defined in~\eqref{e:jeis}, we have:
\begin{equation}
  \label{e:geicinque}
  \begin{split}
    J_5 & \stackrel{\eqref{e:grande}}{\leq}
    K \int_0^T \int_0^t \sup_{x \in ]0, R[} \| D(t-s, x,
    \cdot) \|_{L^1(]0, R[)} \| \fhi_1 (s, \cdot) \|^2_{L^\infty(]0,
      R[)} \| f(s, \cdot, \fhi_\ast) \|_{L^\infty (]0,
      R[)} ds dt
    \\
    & \stackrel{\eqref{e:maxprin},\eqref{e:boundsueffe}, \eqref{e:Duno}}{\leq}
    C(\alpha_1, M, F) \int_0^T \int_0^t \! \!  \| \fhi_1 (s, \cdot)
    \|^2_{L^\infty(]0, R[)} ds
    dt \\
    & \stackrel{\eqref{e:immersione}}{\leq} C(\alpha_1, M, F, T, R) \|
    \fhi_1 \|^2_{L^2 (]0, T[ ;H^1 (]0, R[ ))}.
  \end{split}
\end{equation}
By combining~\eqref{e:geiuno2},~\eqref{e:geidue2},~\eqref{e:geitre},~\eqref{e:geiquattro}, and~\eqref{e:geicinque} we eventually arrive at 
\begin{equation}
  \label{e:finegei}
  \int_0^T  I(t) dt \stackrel{\eqref{e:jeis}}{=} J_1 + J_2 + J_3 + J_4 + J_5 
  \leq
  C(\alpha_1, M, h, T, R, h, F)  \| \fhi_1 \|^2_{L^2 (]0, T[ ;H^1 (]0, R[ ))}
\end{equation}
and this concludes the proof of Lemma~\ref{l:iditi}.  
\end{proof}
\subsection{Conclusion of the proof of Lemma~\ref{l:uni1}}
\label{ss:tuttorigoroso}
We use~\eqref{e:intfhi2} and we get 
\begin{equation}
\begin{split}
   \left| \int_0^T \! \! \int_0^R 
   \fhi_2 (t, x) d \mu_t^* (x) dt \right|
  &  \leq \int_0^T
   \| \mu_t^* \|_{\mathcal M (]0, R[)} 
   \| \fhi_2 (t, \cdot) \|_{L^\infty (]0, R[)}dt  \\
   & \leq \sup_{t \in ]0, R[} 
   \| \mu_t^* \|_{\mathcal M (]0, R[)}  
   \int_0^T 
   \| \fhi_2 (t, \cdot) \|_{L^\infty (]0, R[)}dt  \\
   & \stackrel{\eqref{eq:initass},\eqref{e:intfhi2}}{\leq}
   C (\alpha_1, \alpha_2, M, h, T, R, F)
   \| \fhi_1 \|^2_{L^2(]0, T[ ; H^1(]0, R[ ))}
   \delta . 
   \end{split}
\end{equation}
Next, we combine~\eqref{e:integral} with the above inequality and we conclude that 
\begin{equation}
\label{e:stimafinale}
\begin{split}
   2\int_0^T  \! \! \! \int_0^R \vfi_1(t,x) d\nu_t(x)dt +& 
   \int_0^T \! \! \! \int_0^R 
   \fhi_2 (t, x) d \mu^\ast_t (x) dt \\ & 
\leq  \| \fhi_1 \|^2_{L^2 (  ]0, T[; H^1 (]0, R[))} 
\Big[ 
- C(M, h_\ast) +  C(\alpha_1, \alpha_2, M, h, T, R, F)  \delta
\Big].
\end{split}
\end{equation}
In the previous expression, the quantity at the right hand side is negative provided  
that the constant $\delta$ is sufficiently small. This establishes~\eqref{e:keypoint} and concludes the formal proof of Lemma~\ref{l:uni1}. 

To complete the proof of Lemma~\ref{l:uni1} we are left to make rigorous the formal argument given so far. To this end, we rely on an approximation argument. 
First, we recall the equality
\begin{equation}
\label{e:essupnu}
  \mathrm{ess \, sup}_{t \in ]0, T[} \| \nu_t \|_{\mathcal M(]0, R[)} \stackrel{\eqref{e:nueps},\eqref{e:nu}}{=} 1
\end{equation} 
and we point out that, by passing to the limit in the inequality~\eqref{e:conclusione2}, we get that $\fhi_1$ satisfies 
\begin{equation}
  \label{e:rigfhiuno}
  \begin{split}
    \int_0^R \fhi_1^2 (t, x) dx + \int_0^T \! \! \int_0^R (\px
    \fhi_1)^2 dx dt & + \int_0^T \! \! \int_0^R \fhi_1^2 d \mu^\ast_t
    (x) dt
    \\
    &
    \leq C(\alpha_1, M, F, T, R) \mathrm{ess \, sup}_{t \in ]0, T[} \| \nu_t \|^2_{\mathcal M (]0, R[)} 
    \\
    & \stackrel{\eqref{e:essupnu}}{\leq}C(\alpha_1, M, F, T, R), \quad
    \text{for every $t \in ]0, T[.$ } \phantom{\int}
  \end{split}
\end{equation}
By relying on analogous computations and by using~\eqref{e:rigfhiuno} we infer that 
\begin{equation}
\label{e:rigfhidue}
\begin{split}
   \int_0^R \fhi_2^2 (t, x) dx &
   + \int_0^T \! \! \int_0^R (\px \fhi_2)^2 dx dt 
   +  \int_0^T \! \! \int_0^R \fhi_2^2 d \mu^\ast_t (x) dt \\
   &
  \leq C(\alpha_1, \alpha_2, M, F, T, R),
   \quad \text{for every $t \in ]0, T[.$ } \phantom{\int}
   \end{split}
\end{equation}
Next, we fix three sequences of smooth functions 
$$
\nu_k : ]0, T[ \times ]0, R[ \to \R, \qquad  
\mu^\ast_k: ]0, T[ \times ]0, R[ \to \R \quad \text{and}
\quad 
\fhi_{0k}: ]0, R[ \to \R
$$
such that 
\begin{eqnarray}
& \nu_{kt} \weaks \nu_t \; \text{weakly in $\mathcal M (]0, R[)$}, \;
\| \nu_{kt} \|_{\mathcal M (]0, R[)} \leq 
\esssup\limits_{t \in ]0, T[} \| \nu_t \|_{\mathcal M (]0, R[)}
\; \text{for a.e. $t \in ]0, T[$,} \\
& \mu^\ast_{kt} \weaks \mu^\ast_t \; \text{weakly in $\mathcal M (]0, R[)$}, \;
\| \mu^\ast_{kt} \|_{\mathcal M (]0, R[)} \leq 
\esssup\limits_{t \in ]0, T[} \| \mu^\ast_t \|_{\mathcal M (]0, R[)}
\; \text{for a.e. $t \in ]0, T[$,} \\ 
& \fhi_{0k} \to \fhi_0 \; \text{strongly in $H^1(]0, R[)$}.
\end{eqnarray}
We term $\fhi_{\ast k}$, $\fhi_{1k}$ and $\fhi_{2k}$ the corresponding solutions of the initial-boundary value problems~\eqref{e:fhiast},~\eqref{e:defhiuno} and~\eqref{eq:Euler2}, respectively. Since the coefficient $\mu^\ast_k$ and $\nu_k$ 
and the initial datum $\fhi_{0k}$ are all smooth, then one can show that the solutions 
$\fhi_{\ast k}$, $\fhi_{1k}$ and $\fhi_{2k}$ are also smooth.  This implies that the formal argument given at the previous paragraphs is completely justified and one gets 
\begin{equation}
\label{e:dovepassoalimite}
\begin{split}
2 \int_0^T  \! \! \! \int_0^R \vfi_{1k}(t,x) d\nu_{tk}(x)dt
&  +
   \int_0^T \! \! \! \int_0^R 
   \fhi_{2k} (t, x) d \mu^\ast_{tk} (x) dt \\
   & \leq 
   \| \fhi_{1k} \|^2_{L^2 (  ]0, T[; H^1 (]0, R[))} 
\Big[ 
- C(M, h) +  C(\alpha_1, \alpha_2, M, h, T, R)  \delta
\Big] \\
& \leq -\frac{C(M, h_\ast)}{2}   \| \fhi_{1k} \|^2_{L^2 (  ]0, T[; H^1 (]0, R[))}
 \quad 
   \text{for every $k$}
   \end{split} 
\end{equation}
provided that the constant $\delta$ is sufficiently small. 
Next, we point out that $\fhi_{\ast k}$, $\fhi_{1k}$ and $\fhi_{2k}$ satisfies the inequality~\eqref{e:reglim},~\eqref{e:rigfhiuno} and~\eqref{e:rigfhidue}, respectively. By arguing as in the proof of Theorem~\ref{th:main1}, one can show that 
\begin{equation*}
   \fhi_{\ast k} \to \fhi_\ast, \quad 
   \fhi_{1k} \to \fhi_1, \quad 
   \fhi_{2k} \to \fhi_2 \quad 
   \text{strongly in $L^2 (]0, T[, C^0([0, R]))$.}
\end{equation*}
Also,
\begin{equation*}
   \px \fhi_{\ast k} \weak \px \fhi_\ast, \quad 
   \px \fhi_{1k} \weak \px \fhi_1, \quad 
   \px \fhi_{2k} \weak \px \fhi_2 \quad 
   \text{weakly in $L^2 (]0, T[ \times ]0, R[)$.}
\end{equation*}
In particular, the above convergence results imply that 
\begin{equation}
\label{e:liminf}
  \| \fhi_1 \|^2_{L^2 (]0, T[; H^1 (]0, R[)) }
  \leq \liminf_{k \to + \infty}
  \| \fhi_{1k} \|^2_{L^2 (]0, T[; H^1 (]0, R[)) }
\end{equation}
and, by arguing as in the estimate of~\eqref{e:al:ultimo}, that
\begin{equation}
\label{e:convergenzaint}
 \int_0^T  \! \! \! \int_0^R \vfi_{1k} d\nu_{tk}(x)dt \to
 \int_0^T  \! \! \! \int_0^R \vfi_{1}d\nu_{t}(x)dt, \quad 
 \int_0^T \! \! \! \int_0^R 
   \fhi_{2k}  d \mu^\ast_{tk} (x) dt
    \to
 \int_0^T \! \! \! \int_0^R 
   \fhi_{2} d \mu^\ast_{t} (x) dt.
 \end{equation}
By passing to the limit in~\eqref{e:dovepassoalimite} we get
\begin{equation}
\begin{split}
  2 \int_0^T  \! \! \! \int_0^R \vfi_{1}(t,x) d\nu_{t}(x)dt
&  +
   \int_0^T \! \! \! \int_0^R 
   \fhi_{2} (t, x) d \mu^\ast_{t} (x) dt \\
   & \stackrel{\eqref{e:convergenzaint}}{=}
    \lim_{k \to + \infty}
     2 \int_0^T  \! \! \! \int_0^R \vfi_{1k}(t,x) d\nu_{tk}(x)dt
  +
   \int_0^T \! \! \! \int_0^R 
   \fhi_{2k} (t, x) d \mu^\ast_{tk} (x) dt \\
   & \stackrel{\eqref{e:dovepassoalimite}}{\leq}
   \limsup_{k \to + \infty}
   -\frac{C(M, h_\ast)}{2}
    \| \fhi_{1k} \|^2_{L^2 (]0, T[; H^1 (]0, R[)) } \\
    & = - \frac{C(M, h_\ast)}{2} \liminf_{k \to + \infty}
    \| \fhi_{1k} \|^2_{L^2 (]0, T[; H^1 (]0, R[)) } \\
    & \stackrel{\eqref{e:liminf}}{\leq}
    - \frac{C(M, h_\ast)}{2} 
    \| \fhi_1 \|^2_{L^2 (]0, T[; H^1 (]0, R[)) }.
\end{split}
\end{equation}
This establishes~\eqref{e:keypoint} and hence concludes the proof of Lemma~\ref{l:uni1}. 
\section{Solutions of the differential game and Nash equilibria}
\label{sec:7}
This section aims at the discussing the differential game modeling the case when there are several competing fish companies and at establishing the existence of Nash equilibria. 
More precisely, we define our differential game as follows: we assume that there are $m>1$ players (i.e., fish companies) and we denote by $\mu_i$ the fishing intensity of the $i$-th company. We term $\fhi$ the fish population density and we consider the initial-boundary value problem 
\begin{equation}
\label{eq:2N}
\begin{cases}
 \pt\vfi = \pxx\vfi - \vfi \displaystyle{\sum_{i=1}^m} \mu_m + f(t,x,\vfi) \fhi,
    & \text{in $]0, T[ \times ]0, R[$},
    \\
      \px \fhi(t,0)=\px\vfi(t,R) = 0, & t \in \, ]0, T[,
     \\
    \vfi(0,x)=\vfi_0(x), 
    & x\in{]0,R[}. \phantom{\displaystyle{\int}}
  \end{cases}
\end{equation}
The goal of the $i$-th player (i.e., fish company) is to maximize his payoff $J_i$, which is defined by setting
\begin{equation}
\label{eq:3.1N}
J_i(\mu):=\int_0^T \! \! 
\int_0^R \vfi(t,x) d\mu_{i,t}(x)dt-\Psi_i\left(\int_0^T  \! \!  \int_0^R c_i(t,x)d\mu_{i,t}(x)dt\right). 
\end{equation}
The admissible controls satisfy $\mu_i \in L^\infty(]0,T[; {\mathcal M}_+(]0,R[))$ and the constraint 
\begin{equation}
\label{eq:3.2N}
\int_0^R b_i(t,x)d\mu_{i,t}(x)  \leq 1, \quad \text{for a.e. $t \in ]0, T[$}.
\end{equation}
The functions $\Psi_i$, $c_i$ and $b_i$ in~\eqref{eq:3.1N} and~\eqref{eq:3.2N}  satisfy the following assumptions. 
\begin{itemize}
\item[{\bf (H.7)}]
\label{h:accasette} 
The functions $\Psi_1, \dots,\Psi_m: \R \to \R$ are twice continuously differentiable, nondecreasing, and convex. 
\item[{\bf (H.8)}]
\label{h:accaotto}
The functions $c_1, \dots ,c_m: [0,T]\times[0,R] \to \R^+ \cup \{ + \infty \}$  are all lower semi-continuous. The functions $b_1, \dots ,b_m: [0,T]\times[0,R] \to \R \cup \{ + \infty \}$ are lower semi-continuous and satisfy
\begin{equation}
\label{eq:3.3N}
b_i(t,x) \ge b_0>0, \qquad \text{for all $(t,x)\in [0,T]\times[0,R]$ and $i=1,...,m$},
\end{equation}
for some positive constant $b_0> 0$.
\end{itemize}
We now provide the definition of Nash equilibrium. 
\begin{definition}
  \label{def:solN}
  A Nash equilibrium solution for the differential game~\eqref{eq:2N}
  is an $m$-tuple $(\mu_1,..., \mu_m)$ such that, for every
  $i \in \left\{1, \ldots, m\right\}$, $\mu_i
  \in L^\infty(]0,T[; {\mathcal M}_+(]0,R[))$ is
  a solution of the problem
  \begin{equation*}
    \text{maximize $J_i(\mu_i)$, defined as in~\eqref{eq:3.1N} among
      $\mu_i \in L^\infty \big(]0, T[; \mathcal M_+ (]0, R[) \big)$
      satisfying~\eqref{eq:3.2N}}.
  \end{equation*}
\end{definition}
The main result of the present section establishes the existence of Nash equilibria. 
\begin{theorem}
\label{th:mainNash}
Assume {\bf (H.1)}-{\bf (H.2)} and {\bf (H.6)}-{\bf (H.8)}.  There is a constant $\delta>0$, which only depends on the constants $\alpha_1$, $\alpha_2$, $M$, $h$, $T$, $R$ and $h_\ast$ such that, if\begin{equation}
\label{eq:initassN}
\norm{\vfi_0-h}_{H^1(]0,R[)}\le \delta,
\end{equation}
then the differential game~\eqref{eq:2N} has a Nash equilibrium $(\mu_1,...,\mu_m)$ such that 
\begin{equation}
\label{eq:measassN}
\mathrm{ess \; sup}_{t \in ]0, T[} \| \mu_{i, t} \|_{\mathcal M(]0, R[)} \leq \delta
\quad \text{for every $i=1, \dots, m$.}
\end{equation}
\end{theorem}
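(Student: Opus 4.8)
\emph{Approach.} I would realise a Nash equilibrium as a fixed point of the simultaneous best-response map, relying on the single-player machinery of \S\ref{sec:4}--\S\ref{s:pkey}. Fix a player $i$ and a profile $\hat\mu=(\hat\mu_1,\dots,\hat\mu_m)$ of the competitors. Then the $i$-th payoff $J_i(\cdot\,;\hat\mu_{-i})$ of~\eqref{eq:3.1N} is the functional~\eqref{eq:3.1bis} attached to the parabolic problem~\eqref{eq:2} in which the coefficient $\mu_t$ is replaced by the \emph{total} harvesting measure $\mu_{i,t}+\sum_{j\ne i}\hat\mu_{j,t}$, except that only the player's own measure $\mu_{i,t}$ appears in the two integrals defining $J_i$. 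Since the additional drift $-\fhi\sum_{j\ne i}\hat\mu_{j,t}$ has exactly the structure of the term $-\fhi\mu_t$, Theorem~\ref{th:main1} (well-posedness and the stability estimate~\eqref{e:stability2}), Proposition~\ref{th:main2} (existence of an optimal strategy), Theorem~\ref{th:Euler} (the Euler condition) and Theorem~\ref{th:main3} together with Lemmas~\ref{l:concave} and~\ref{l:uni1} (local concavity and local uniqueness) all go through, with the obvious modifications: in the linearised equations~\eqref{eq:Euler} and~\eqref{eq:Euler2} the total measure multiplies $\fhi_1$ and $\fhi_2$, while in the pairings of~\eqref{e:keypoint} it is the player's own measure $\mu_{i,t}$ that appears; the smallness threshold is then phrased in terms of the essential-sup total variation of the \emph{total} measure. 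The fact that $\mu_{i,t}$, rather than the total measure, enters $J_i$ only improves the estimates, since $\|\mu_{i,t}\|_{\M(]0,R[)}\le\|\mu_{i,t}+\sum_{j\ne i}\hat\mu_{j,t}\|_{\M(]0,R[)}$.

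\emph{The fixed-point set and the best response.} Let $\delta'$ be the threshold produced by (the adapted) Theorem~\ref{th:main3} and set $\delta:=\delta'/(m+1)$; I would work, as in the remark following Theorem~\ref{th:main3}, in the regime $1/b_0\le\delta$, so that by~\eqref{e:conscons} \emph{every} strategy admissible for a player automatically satisfies $\mathrm{ess\,sup}_t\|\mu_{i,t}\|_{\M(]0,R[)}\le\delta$, and the constrained game coincides with its restriction to
\[
   \mathcal{K}:=\Big\{\hat\mu=(\hat\mu_1,\dots,\hat\mu_m):\ \hat\mu_i\in L^\infty\big(]0,T[;\M_+(]0,R[)\big)\ \text{satisfies}\ \eqref{eq:3.2N}\ \text{for each}\ i\Big\}.
\]
Viewed through the product measures of~\eqref{e:cosaemu}, $\mathcal{K}$ is convex, nonempty (it contains the null profile) and, by~\eqref{e:conscons} together with a standard weak-$\ast$ compactness and disintegration argument showing the class of time-dependent measures is closed under the total-variation bound, compact and metrizable in the weak-$\ast$ topology. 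For $\hat\mu\in\mathcal{K}$ the background $\sum_{j\ne i}\hat\mu_j$ has essential-sup total variation at most $(m-1)\delta$, so the total harvesting measure faced by player $i$ has essential-sup total variation at most $m\delta$; combining this with~\eqref{eq:initassN} gives $m\delta+\|\fhi_0-h\|_{H^1(]0,R[)}\le(m+1)\delta=\delta'$. Hence Proposition~\ref{th:main2} provides a maximiser of $J_i(\cdot\,;\hat\mu_{-i})$ over the admissible set, and the adapted Theorem~\ref{th:main3} shows it is unique; denote it $\mathcal{R}_i(\hat\mu)$, and set $\mathcal{R}:=(\mathcal{R}_1,\dots,\mathcal{R}_m):\mathcal{K}\to\mathcal{K}$. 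By Definition~\ref{def:solN}, a fixed point of $\mathcal{R}$ is exactly a Nash equilibrium of~\eqref{eq:2N} satisfying~\eqref{eq:measassN}.

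\emph{Continuity and fixed point.} It remains to check that $\mathcal{R}$ is continuous for the weak-$\ast$ topology, after which the Schauder--Tychonoff fixed-point theorem yields a fixed point. Let $\hat\mu^{(n)}\weaks\hat\mu$ in $\mathcal{K}$ and extract a weak-$\ast$ convergent subsequence $\mathcal{R}_i(\hat\mu^{(n)})\weaks\bar\mu_i$. By the stability estimate~\eqref{e:stability2}, applied with the second slot fixed at $\hat\mu^{(n)}_{-i}\to\hat\mu_{-i}$, the corresponding fish densities converge strongly in $L^2(]0,T[;C^0([0,R]))$, so, arguing as in Step~3 of the proof of Theorem~\ref{th:main1}, the term $\int_0^T\!\!\int_0^R\fhi\,d\mu_{i,t}(x)\,dt$ passes to the limit; since $c_i$ is lower semicontinuous and $\Psi_i$ nondecreasing and continuous, the cost term $\Psi_i\big(\int_0^T\!\!\int_0^R c_i\,d\mu_{i,t}(x)\,dt\big)$ is upper semicontinuous, hence $\mu_i\mapsto J_i(\mu_i;\hat\mu_{-i})$ is upper semicontinuous under weak-$\ast$ convergence. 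Comparing, for each $n$, the value at $\mathcal{R}_i(\hat\mu^{(n)})$ with the value at an arbitrary fixed admissible competitor and passing to the limit shows that $\bar\mu_i$ maximises $J_i(\cdot\,;\hat\mu_{-i})$; by uniqueness $\bar\mu_i=\mathcal{R}_i(\hat\mu)$, so the whole sequence converges and $\mathcal{R}$ is continuous. One may instead invoke Kakutani--Fan--Glicksberg for the best-response correspondence, using the concavity of Lemma~\ref{l:concave} for convex values and the above argument for the closed graph.

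\emph{Main obstacle.} The genuinely delicate points are, first, the continuity (equivalently the closed-graph property) of the best-response map: this rests entirely on the stability estimate~\eqref{e:stability2}, which is insensitive to the frozen background measure, and on the upper semicontinuity --- but not continuity --- of the linear-growth cost. Second, and more importantly, the threshold $\delta$ must be chosen uniformly in the competitors' strategies, so that whatever profile they play within $\mathcal{K}$ the total harvesting measure stays below the threshold $\delta'$ of Theorem~\ref{th:main3}; this is what forces $\delta$ to be of order $\delta'/m$ and what makes the regime $1/b_0\le\delta$ the natural one, since only then is every admissible deviation of player $i$ automatically confined to the class where, by Lemmas~\ref{l:concave} and~\ref{l:uni1}, the payoff $J_i(\cdot\,;\hat\mu_{-i})$ is concave with a unique maximiser. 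Checking that the estimates of \S\ref{sec:6}--\S\ref{s:pkey} are indeed unaffected by the extra drift is then essentially bookkeeping.
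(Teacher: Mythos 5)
Your proposal is correct and follows essentially the same route as the paper: both realise a Nash equilibrium as a fixed point of the best-response map --- well-defined by Proposition~\ref{th:main2} together with the local uniqueness of Theorem~\ref{th:main3} applied with the competitors' measures frozen into the coefficient --- on a weak-$\ast$ compact, convex strategy set obtained via a disintegration argument, with continuity of the best response deduced from the stability estimate~\eqref{e:stability2} and the semicontinuity of the linear cost, and Schauder--Tychonoff concluding. You make the paper's implicit bookkeeping explicit (the $m$-dependent scaling $\delta=\delta'/(m+1)$ and the regime $1/b_0\le\delta$ confining admissible strategies to $C_\delta$); the only slip is a sign in the semicontinuity claim, since $\mu_i\mapsto\Psi_i\bigl(\int_0^T\!\int_0^R c_i\,d\mu_{i,t}\,dt\bigr)$ is \emph{lower} semicontinuous for weak-$\ast$ convergence, and it is the minus sign in~\eqref{eq:3.1N} that yields upper semicontinuity of $J_i$, which is the conclusion you correctly invoke.
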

\begin{proof}
We follow the same argument as in~\cite[\S6]{BS1} and,  to simplify the exposition, we only discuss the case when $m=2$ and we assume $c_1=c_2$, $\Psi_1=\Psi_2$. The proof straightforwardly extends to the general case. We proceed according to the following steps. \\
{\sc Step 1:} we introduce some notation and make some preliminary considerations. 
First, we fix ${\eta \in L^\infty (]0, T[ ; \mathcal M_+ (]0, R[))}$ and 
we consider the function 
\begin{equation}
\label{e:N:gei}
\begin{split}
    & \qquad \qquad J_\eta: 
    L^\infty (]0, T[ ; \mathcal M_+ (]0, R[))\to \R, \\
    &J_\eta (\mu) : =
    \int_0^T \int_0^R 
    \fhi (t, x) d \mu_t (x) dt - \Psi \left( 
    \int_0^T \int_0^R 
    c (t, x) d \mu_t (x) dt \right), 
    \end{split}
\end{equation}
where $\fhi$ is the weak solution of the initial-boundary value problem
\begin{equation}
\label{e:N:ibvp}
  \begin{cases}
\pt \fhi = \pxx \fhi -  [\eta + \mu] \fhi + g(t,x, \fhi), & \text{in $]0, T[ \times ]0, R[$} ,  \\
\px \fhi(t,0)=\px\vfi(t,R) = 0, & t \in ]0, T[, \\
\fhi(0,x) = \fhi_0(x), & x \in ]0,R[.
    \end{cases}
\end{equation}
Next, we fix a small constant $\delta>0$. The precise value of $\delta$ will be determined in the following. We define the set $\mathcal C_\delta$ by setting 
\begin{equation}
\label{e:cidelta}
  \mathcal C_\delta : = 
  \left\{ 
  \mu \in L^\infty (]0, T[ ; \mathcal M_+ (]0, R[)): \;
  \mathrm{ess \; sup}_{t \in ]0, T[} \| \mu_t \|_{\mathcal M (]0, R[)}  
  \leq \delta 
  \right\}. 
\end{equation} 
By using the same argument as in the proof of Theorem~\ref{th:main2} (existence) and Theorem~\ref{th:main3} (uniqueness) we arrive at the following result. 
\begin{lemma}
\label{l:exunash}
Under the same assumptions as in the statement of Theorem~\ref{th:mainNash}, there is a sufficiently small constant $\delta$ such that, if~\eqref{eq:initassN} holds, then for every $\eta \in C_\delta$ there is a unique 
$\mu^{opt} (\eta) \in C_\delta$ such that 
\begin{equation}
  \label{e:N:opt}
  J_\eta (\mu^{opt}(\eta) ) \ge J_\eta (\mu) 
  \qquad \text{for every $\mu \in C_\delta$}.  
\end{equation}
\end{lemma}
By relying on Lemma~\ref{l:exunash} we can define the map $T$ by setting 
\begin{equation}
\label{e:N:T}
\begin{split}  
      & \quad \quad T: C_\delta \times C_\delta \to 
      C_\delta \times C_\delta \\
      &
      T(\mu_1, \mu_2) : = \left(\mu^{opt}(\mu_2) , \mu^{opt}(\mu_1) \right). 
\end{split}
\end{equation} 
We now show that $C_\delta$ is compact with respect to the weak-$^\ast$ convergence. First, we fix a sequence $\{ \mu_n \} $ in $C_\delta$ and we recall that the Borel measure $\mu_n$ 
on $]0,T[ \times ]0,R[$ is defined by setting
\begin{equation}
\label{e:cosaemuenne}
  \mu_{n} (E) : = \int_0^T \! \! \int_0^R  
    \mathbbm{1}_E (t, x) d \mu_{n,t} (x) dt.
\end{equation}
Note that, if $\mu_n \in C_\delta$, then the total variation $|\mu_n | \leq \delta T$. Hence, there is a Borel measure $\mu$ such that, up to subsequences, $\mu_n \weaks \mu$ in $\mathcal M (]0, T[ \times ]0, R[)$, namely 
$$
   \int_0^T \! \! \int_0^R  
   v (t, x) d \mu_{n, t} (x) dt \to 
   \int_0^T \! \! \int_0^R  
   v (t, x) d \mu (t, x) .
$$
for every $v \in C\left([0,T] \times [0,R]\right)$.
We now have to show that the limit measure $\mu \in C_\delta$, namely it admits a representation like~\eqref{e:cosaemuenne}.  To this end, we term $\pi$ the projection 
$$
    \pi : ]0, T[ \times ]0, R[ \to ]0, T[, \qquad (t, x) \mapsto t
$$
and we point out that
\begin{equation}
\label{e:project}
   \pi_\sharp \mu_n = f_n \mathcal  L^1 \big|_{]0, T[} \qquad \text{for every $n$}.
\end{equation}
In the above expression, $\pi_\sharp \mu_n$ denotes the push-forward of the measure $\mu_n$ and $\mathcal L^1 \big|_{]0, T[} $ the restriction of the Lebesgue measure. Also, the density $f_n$ is given by 
$$
    f_n (t) : = \| \mu_{ n, t} \|_{\mathcal M(]0, R[)} \leq \delta \quad \text{for a.e. $t \in ]0, T[$}.    
$$ 
Next, we point out that, by possibly extracting a further subsequence, we can assume that the sequence
$$
    f_n \weaks f \quad \text{weakly$^\ast$ in $L^\infty (]0, T[)$}
$$
for some accumulation point $f$ satisfying $\| f \|_{L^\infty (]0, T[ )} \leq \delta$. 
By passing to the weak$^\ast$ limit on both sizes of the equality~\eqref{e:project} we obtain 
$$
     \pi_\sharp \mu = f \mathcal  L^1 \big|_{]0, T[} .
$$ 
By using the Disintegration Theorem~\cite[Theorem 2.28]{AmbrosioFuscoPallara} and recalling the inequality $\| f \|_{L^\infty (]0, T[ )} \leq \delta$ we eventually conclude that $\mu \in C_\delta$.  This implies that the set $C_\delta$ is compact. Also, it is obviously convex. Owing to the  
Schauder-Tychonoff Fixed Point Theorem, if the map $T$ defined as in~\eqref{e:N:T} is continuous, then it admits a fixed point, which is by construction a Nash equilibrium in the sense of Definition~\ref{def:solN}. Hence, the proof of Theorem~\ref{th:mainNash} boils down to the proof of the continuity of $T$. \\
{\sc Step 2:} we prove that the map $T$ defined as in~\eqref{e:N:T} is continuous with respect to the weak-$^\ast$ convergence. To prove the continuity of $T$  it suffices to show that the map $\eta \mapsto \mu^{opt}(\eta)$ defined as in the statement of Lemma~\ref{l:exunash} is continuous. Hence, we fix 
\begin{equation}
\label{e:sigmaenne}
  \sigma_n \weaks \sigma. 
\end{equation}
We want to show that 
\begin{equation}
\label{e:continuityT}
      \tau_n : = \mu^{opt}(\sigma_n)
      \weaks \mu^{opt}(\sigma): = \tau \quad \text{as $n \to + \infty$}.
\end{equation}
Owing to the weak-$^\ast$compactness of $C_\delta$ we have that, up to subsequences, 
\begin{equation}
\label{e:continuityTi}
      \tau_n       \weaks \tau_\infty \quad \text{as $n \to + \infty$}
\end{equation}
for some $\tau_\infty \in C_\delta$. Owing to the uniqueness part in 
Lemma~\ref{l:exunash}, to establish~\eqref{e:continuityT} it suffices to show that
\begin{equation}
\label{e:maggioreuguale}
        J_\sigma (\tau_\infty) \ge J_\sigma \left( \tau \right).
\end{equation}
To establish~\eqref{e:maggioreuguale} we argue as follows. First, we term  $\fhi_n$ the weak solution of the initial-boundary value problem~\eqref{e:N:ibvp} in the case when $\eta=\sigma_n$ and $\mu=\tau_n$, namely
\begin{equation}
\label{e:N:ibvpn}
  \begin{cases}
\pt \fhi_n = \pxx \fhi_n -  [\sigma_n + \tau_n] \fhi_n + g(t,x, \fhi_n), & \text{in $]0, T[ \times ]0, R[$} ,  \\
\px \fhi_n(t,0)=\px \vfi_n(t,R) = 0, & t \in ]0, T[, \\
\fhi_n(0,x) = \fhi_0(x), & x \in ]0,R[.
    \end{cases}
\end{equation}
Note that, owing to~\eqref{e:N:gei}, 
\begin{equation}
\label{e:N:geienne}
   J_{\sigma_n} \left( \tau_n \right)=
    \int_0^T \int_0^R 
    \fhi_n (t, x) d \tau_{n,t}(x)  dt - \Psi \left( 
    \int_0^T \int_0^R 
    c (t, x) d \tau_{n,t}(x)  dt \right).
\end{equation}
Also, we term $\tilde \fhi_n$ the solution of the initial-boundary value problem~\eqref{e:N:ibvp} in the case when $\eta= \sigma_n$ and $\mu= \tau$, namely
\begin{equation}
\label{e:N:ibvpnt}
  \begin{cases}
\pt \tilde \fhi_n = \pxx \tilde \fhi_n -  [\sigma_n + \tau] \tilde \fhi_n + g(t,x, \tilde
\fhi_n), & \text{in $]0, T[ \times ]0, R[$} ,  \\
\px \tilde \fhi_n(t,0)=\px \tilde \vfi_n(t,R) = 0, & t \in ]0, T[, \\
\tilde \fhi_n(0,x) = \fhi_0(x), & x \in ]0,R[.
    \end{cases}
\end{equation}
We recall that $\tau_n = \mu^{opt} (\sigma_n)$ and we infer that  
\begin{equation}
\label{e:maggioreuguale2}
J_{\sigma_n} \left( \tau_n \right) \ge 
J_{\sigma_n} \left( \tau \right) =
\int_0^T \int_0^R 
   \tilde  \fhi_n (t, x) d \tau_t (x) dt - \Psi \left( 
    \int_0^T \int_0^R 
    c (t, x) d \tau_t(x) dt \right).
\end{equation}
Next, we recall the estimate~\eqref{e:stability2}, we use the Aubin-Lions Lemma as in the proof of Theorem~\ref{th:main1} and we conclude that 
there are functions $\fhi$ and $\tilde \fhi$ such that
\begin{equation}
\label{e:convergenzafhi}
    \fhi_n \to \fhi, \quad \tilde \fhi_n \to \tilde \fhi
    \quad \text{strongly in $L^2 (]0, T[; C^0 ([0, R]))$ as $n \to + \infty$}.
\end{equation}
Also, by arguing again as in the proof of Theorem~\ref{th:main1} we get that we can pass to the limit in the distributional formulation of the initial-boundary value problem. We conclude that $\fhi$ and $\tilde \fhi$ are solutions of the initial-boundary value problem~\eqref{e:N:ibvp} in the case when $\eta=\sigma$, $\mu= \tau_\infty$ and 
$\eta=\sigma$, $\mu= \tau$, respectively, namely
\begin{equation}
\label{e:N:ibvp2}
  \begin{cases}
\pt  \fhi = \pxx \fhi -  [\sigma + \tau_\infty] \fhi + g(t,x, 
\fhi)\\
\px \fhi(t,0)=\px \vfi(t,R) = 0 \\
\fhi(0,x) = \fhi_0(x) \\
    \end{cases}
    \qquad 
    \begin{cases}
\pt \tilde \fhi = \pxx \tilde \fhi -  [\sigma+ \tau] \fhi + g(t,x, \tilde
\fhi)\\
\px \tilde \fhi(t,0)=\px \tilde \vfi(t,R) = 0 \\
\tilde \fhi(0,x) = \fhi_0(x). \\
    \end{cases}
\end{equation}
Next, we use the convergence $\fhi_n \to \fhi$ and the lower semicontinuity of $c$ to pass to the limit in~\eqref{e:N:geienne}. By using the fact that $\Psi$ is nondecreasing, we get 
\begin{equation}
\label{e:N:limsup}
\begin{split}
   \limsup_{n \to + \infty} J_{\sigma_n} (\tau_n) 
   & \leq  \limsup_{n \to + \infty}
   \int_0^T \int_0^R 
   \fhi_n (t, x) d \tau_{n,t} (x) dt + \limsup_{n \to + \infty} -
   \Psi \left( 
    \int_0^T \int_0^R 
    c (t, x) d \tau_{n,t}(x) dt \right) \\
    & = \lim_{n \to + \infty}
   \int_0^T \int_0^R 
   \fhi_n (t, x) d \tau_{n,t} (x) dt - \liminf_{n \to+ \infty}
    \Psi \left( 
    \int_0^T \int_0^R 
    c (t, x) d \tau_{n,t}(x) dt \right)  \\
    & \stackrel{\eqref{e:continuityT}}{=}
     \int_0^T \int_0^R 
   \fhi (t, x) d \tau_{\infty,t} (x) dt -
    \Psi \left(  \liminf_{n \to + \infty}
    \int_0^T \int_0^R 
    c (t, x) d \tau_{n,t}(x) dt \right)  \\  
  &  \stackrel{\Psi' \ge 0}{\leq}
     \int_0^T \int_0^R 
   \fhi (t, x) d \tau_{\infty,t} (x) dt -
    \Psi \left( 
    \int_0^T \int_0^R 
    c (t, x) d \tau_{\infty,t}(x) dt \right) \\
    & \stackrel{\eqref{e:N:ibvp2}}{=}
    J_\sigma (\tau_\infty). 
    \end{split}
\end{equation}
By using the convergence $\tilde \fhi_n \to \tilde \fhi$ we can then pass to the limit
in the expression at the right hand side of~\eqref{e:maggioreuguale2} 
and conclude that 
\begin{equation*}
\begin{split}
  J_\sigma (\tau_\infty) 
  \ge  \limsup_{n \to + \infty} J_{\sigma_n} (\tau_n)
   \ge \lim_{n \to + \infty} J_{\sigma_n} (\tau) & =
   \int_0^T \! \! \! \int_0^R 
   \tilde  \fhi (t, x) d \tau_t (x) dt - \Psi \left( 
    \int_0^T \! \! \! \int_0^R 
    c (t, x) d \tau_t(x) dt \right) \\
    & \stackrel{\eqref{e:N:ibvp2}}{=}  J_\sigma (\tau).
    \end{split}
\end{equation*}
The above chain of inequalities implies~\eqref{e:maggioreuguale} and hence concludes the proof
of Theorem~\ref{th:mainNash}. 
\end{proof}
\appendix
\section{Fundamental solutions of the heat equation}
For the readers' convenience, we collect in this section some basic facts about the fundamental solutions of the heat equation in one-dimensional, bounded domains. We refer to~\cite{Cannon} for an extended discussion.   

First, we fix an interval $]0, R[$ and we define the function $D$ by setting 
\begin{equation}
 \label{e:Di}
     D: ]0, + \infty[ \times ]0, R[ \times ]0, R[ \to \R \qquad 
     D(t, x, y) : = \sum_{m= - \infty}^{m = + \infty} G(t, x + 2m R - y)+
     G(t, x + 2m R +y), 
 \end{equation}
 where $G$ is the standard Green kernel 
 $$
     G(t, x) : = \frac{1}{2 \sqrt{\pi t}} \exp \left( \frac{-x^2}{4t}\right). 
 $$
Note that, for every $u_0 \in L^2 (]0, R[)$, the function
$$
     u(t, x) : = \int_0^R D(t, x, y) u_0(y) dy 
$$
is a weak solution of the initial-boundary value problem
\begin{equation*}
\begin{cases}
\pt u = \pxx u & \text{in $]0, T[ \times ]0, R[$} ,  \\
\px u(t,0)=\px u(t,R) = 0, & t \in ]0, T[, \\
u(0,x) = u_0(x), & x \in {]0,R[}.
    \end{cases}
\end{equation*}
Note furthermore that, owing to Duhamel's principle, for every measurable, bounded 
function \\${\ell: ]0, + \infty[ \times ]0, R[ \to \R}$
the function
$$
    u(t, x) : = \int_0^R D(t, x, y) u_0(y) dy +
    \int_0^t \! \! \int_0^R D(t -s , x, y) \ell (s, y) dy ds 
$$ 
is a weak solution of the initial-boundary value problem
\begin{equation*}
\begin{cases}
\pt u = \pxx u + \ell(t, x) & \text{in $]0, T[ \times ]0, R[$} ,  \\
\px u(t,0)=\px u(t,R) = 0, & t \in ]0, T[, \\
u(0,x) = u_0(x), & x \in {]0,R[}.
    \end{cases}
\end{equation*}
By direct computations, one can show that the kernel $D$ satisfies the following estimates:
\begin{eqnarray}
    \| D(t, x, \cdot ) \|_{L^\infty (]0, R[)}
    \leq \frac{C(R)}{\sqrt{t}}
    \qquad \text{for every $t>0$, $x \in ]0, R[$} 
    \label{e:Dlinfty} \\
    \| D(t, x, \cdot ) \|_{L^1(]0, R[)}
    \leq K
    \qquad \text{for every $t>0$, $x \in ]0, R[$} 
    \label{e:Duno} \\
     \| \partial_y D(t, x, \cdot ) \|_{L^2 (]0, R[)}, 
     \; \| \partial_x D(t, x, \cdot ) \|_{L^2 (]0, R[)}
    \leq \frac{K}{t^{3/4}}
    \qquad \text{for every $t>0$, $x \in ]0, R[$} 
    \label{e:Dxdue} \\
     \| D(t, \cdot, y ) \|_{L^1(]0, R[)}
    \leq K
    \qquad \text{for every $t>0$, $y \in ]0, R[$} 
    \label{e:Dunox} 
    \\
     \| \partial_x D(t, \cdot, y ) \|_{L^2 (]0, R[)}
    \leq \frac{K}{t^{3/4}}
    \qquad \text{for every $t>0$, $y \in ]0, R[$}.
    \label{e:Dxduex} 
\end{eqnarray}
Finally, we define the kernel $\tilde D$ associated with the Dirichlet boundary conditions by setting 
\begin{equation}
 \label{e:Ditilde}
     \tilde D: ]0, + \infty[ \times ]0, R[ \times ]0, R[ \to \R \qquad 
     \tilde D(t, x, y) : = \sum_{m= - \infty}^{m = + \infty} G(t, x + 2m R - y)-
     G(t, x + 2m R +y), 
 \end{equation}
and we point out that 
\begin{equation}
\label{e:dualita}
    \int_0^R \partial_x D(t, x, y) u_0(y) dy =
   \int_0^R \tilde D(t, x, y) u'_0(y) dy, 
   \quad 
   \int_0^R \partial_{xx} D(t, x, y) u_0(y) dy =
   \int_0^R \px  \tilde D(t, x, y) u'_0(y) dy
\end{equation}
for every continuously differential function $u_0$. By direct computations, we get the estimates 
\begin{eqnarray}
    \| \tilde D(t, x, \cdot ) \|_{L^\infty (]0, R[)}
    \leq \frac{C(R)}{\sqrt{t}}
    \qquad \text{for every $t>0$, $x \in ]0, R[$} 
    \label{e:tDlinfty} \\
    \| \tilde D(t, x, \cdot ) \|_{L^1(]0, R[)}
    \leq K
    \qquad \text{for every $t>0$, $x \in ]0, R[$} 
    \label{e:tDuno} \\
     \| \partial_x \tilde D(t, x, \cdot ) \|_{L^2 (]0, R[)}
    \leq \frac{K}{t^{3/4}}
    \qquad \text{for every $t>0$, $x \in ]0, R[$} 
    \label{e:tDxdue} \\
    \| \tilde D(t, \cdot, y ) \|_{L^1(]0, R[)}
    \leq K
    \qquad \text{for every $t>0$, $y \in ]0, R[$.} 
    \label{e:tDunox} 
\end{eqnarray}
\section*{Acknowledgments}
All the authors are members of the Gruppo Nazionale per l'Analisi Matematica, la Probabilit\`a e le loro Applicazioni (GNAMPA) of the Istituto Nazionale di Alta Matematica (INdAM) and are supported by the MIUR-PRIN Grant ``Nonlinear Hyperbolic Partial Differential Equations, Dispersive and Transport Equations: theoretical and applicative aspects''.

\end{document}